\newcommand{\bq}{\begin{equation}}
\newcommand{\eq}{\end{equation}}
\newcommand{\bqa}{\begin{eqnarray*}}
\newcommand{\eqa}{\end{eqnarray*}}
\theoremstyle{plain}
\newtheorem{theorem}{Theorem}[section]
\newtheorem{Ntheorem}{Theorem}
\newtheorem{proposition}[theorem]{Proposition}
\newtheorem{lemma}[theorem]{Lemma}
\newtheorem{corollary}[theorem]{Corollary}
\theoremstyle{definition}
\crefname{lemma}{Lemma}{Lemmas}
\crefname{proposition}{Proposition}{Propositions}
\theoremstyle{remark}
\newtheorem{remark}[theorem]{Remark}
\DeclareSymbolFont{pletters}{OT1}{cmr}{m}{sl}
\DeclareMathSymbol{s}{\mathalpha}{pletters}{`s}
\def\mez{\frac{1}{2}}
\def\L1{\mathcal{L}^{(1)}}
\def\L2{\mathcal{L}^{(2)}}
\def\L3{\mathcal{L}^{(3)}}
\numberwithin{equation}{section}
\renewcommand{\geq}{\geqslant}
\renewcommand{\leq}{\leqslant} 
\title{Generalized Carleson Embeddings of Müntz Spaces}
\date{\today}
\author{Micka\"el Latocca}
\address{
Laboratoire de Mathématiques et de Modélisation d'\'Evry (LaMME)\\
Université d'\'Evry\\
23 Bd François Mitterrand, 91000 Évry-Courcouronnes, France
}
\email[M. Latocca]{mickael.latocca@univ-evry.fr}
\author{Vincent Munnier}
\address{16 avenue Pasteur, 94100 Saint Maur Des Fossés, France
}
\email[V. Munnier]{munniervincent@hotmail.fr}
\begin{document}
\newcommand{\vincent}[1]{{\color{orange} \textbf{V:} #1}}
\newcommand{\mickael}[1]{{\color{teal} \textbf{M:} #1}}

\begin{abstract} This paper establishes Carleson embeddings of Müntz spaces $M^q_{\Lambda}$ into weighted Lebesgue spaces $L^p(\mathrm{d}\mu)$, where $\mu$ is a Borel regular measure on $[0,1]$ satisfying $\mu([1-\varepsilon])\lesssim \varepsilon^{\beta}$. 
In the case $\beta \geqslant 1$ we show that such measures are exactly the ones for which Carleson embeddings $L^{\frac{p}{\beta}} \hookrightarrow L^p(\mathrm{d}\mu)$ hold. 
The case $\beta \in (0,1)$ is more intricate but we characterize such measures $\mu$ in terms of a summability condition on their moments. 
Our proof relies on a generalization of $L^p$ estimates à la Gurariy-Macaev in the weighted $L^p$ spaces setting, which we think can be of interest in other contexts. 
\end{abstract}

\maketitle

\section{Introduction}

Let us consider $M_{\Lambda}$, the set of generalized polynomials defined on $[0,1]$ whose generalized spectrum lies in $\Lambda$, that is: 
\[
    M_{\Lambda} = \left\{ f : [0,1] \to \mathbb{R}: f(t)=\sum_{k=0}^Ka_kt^{\lambda_k}, a_k \in \mathbb{C}, K\geqslant 0\right\},
\]
where $\Lambda=(\lambda_{k})_{k\geq 0}$ is an increasing sequence of positive real numbers. We further require the summability condition $\sum_{k\geqslant 0} \frac{1}{\lambda_{k}} < \infty$, so that in view of the Müntz-Sasz theorem \cite[p. 172]{BE}, $M_{\Lambda}$ is \textit{not} dense in $C^0([0,1])$. In this paper we study some geometric properties of the space $M_{\Lambda}^p$, defined as the closure of $M_{\Lambda}$ in $L^p([0,1])$. We refer to the monograph \cite{GL} for a detailed study of the properties of these spaces.

In the sequel, we will always assume that $\Lambda=(\lambda_{k})_{k\geqslant 0}$ is \textit{quasi-lacunary}, that is, up to enlarging this sequence, which we still call $\Lambda$, we can find a family of disjoint sets $(E_{k})_{k\geqslant 0}$ such that $\Lambda=\bigcup_{k\geqslant 0} E_{k}$, where $E_{k}=\{\lambda_{n_{k}}+1, \dots, \lambda_{n_{k+1}}\}$ and such that there exists $N \geqslant 1$ such that $\#E_{k}\leqslant N$ for all $k \geqslant 0$; and there also exists $q>1$ such that $q\leqslant \frac{\lambda_{n_{k+1}}}{\lambda_{n_{k}}+1} \leqslant q^{2N}$ for all $k\geqslant 0$. We write $F_{k}=\operatorname{Span}\{t^{\lambda}, \lambda \in E_k \}$, which is a vector space of dimension not larger than $N$.  

Roughly speaking, \textit{quasi-lacunary} sequences are finite unions of lacunary sequences. We also say that the sequence $\Lambda$ is lacunary when there exists $q>1$ such that for all $k\geqslant 0$ there holds $\frac{\lambda_{k+1}}{\lambda_k} \geqslant q$.  

For any positive Borel regular measure $\mu$ on $[0,1]$, we write $L^p(\mathrm{d}\mu)$ the associated $L^p$ space with respect to the measure $\mu$.

\subsection{Carleson Embeddings}

Our concern is to find a suitable geometric condition on $\mu$ which characterize when the Carleson embeddings of the type $\imath_{q,p} : M_{\Lambda}^q \hookrightarrow L^p(\mathrm{d}\mu)$ holds, for some values of $q$ to be specified, depending on the properties of $\mu$.  

The following geometric condition will appear as very useful in the following: for any $\beta>0$ we denote by $\mathcal{M}_{x^{\beta}}$ the set of positive Borel measures $\mu$ supported on $[0,1]$ which satisfy 
\[
	\mu([1-\varepsilon, 1])\lesssim\varepsilon^{\beta},	
\]
where the implicit constant depends only on $\mu$. In particular, $\mathcal{M}_{x}$ is nothing but the set of \textit{sublinear} measures on $[0,1]$. 
For instance, the measure $\mathrm{d}\nu_{\beta -1} := (1-t)^{\beta-1}\mathrm{d}t$ belongs to $\mathcal{M}_{x^{\beta}}$. Note also that $\mathcal{M}_{x^{\beta}}$ contains \textit{natural} measures which are singular with respect to the Lebesgue measure: one can take for instance the uniform measure supported on a Cantor set of Hausdorff dimension $\beta$. 

The properties of the natural embedding $\imath_{p,p}$ (case $q=p$) have already been studied in the past years and several major results were obtained, which we summarize here. It is important to keep in mind that the properties of the embedding $\imath_{p,p}$ is linked to the continuity properties of multiplication and composition operators acting on Müntz spaces. This has been highlighted by Al Alam \cite{A09}, which raised the interest of characterization of such Carleson embeddings in terms of geometric properties of $\mu$. We also refer to \cite{AL,AGHL} for results in this direction.

This task of studying the Carleson embeddings of Müntz spaces was first carried out by Chalendar, Fricain and Timotin: in \cite{CFT} they study continuty and compactness properties of $\imath_{1,1}$ when $\Lambda$ is quasi-lacunary. In particular, they show that the continuity of $\imath_{1,1}$ is equivalent to $\mu$ being sublinear. 

Then, Noor and Timotin studied the Hilbertian case: in \cite{NT} they are able to prove a similar characterization result when $p=2$. Specifically, they prove that the continuity of $\imath_{2,2}$ is equivalent to $\mu$ being sublinear. However, they assume $\Lambda$ to be lacunary. They leave open the question of whether their result could be generalized for $p>2$. They also raise the problem of the extension to the quasi-lacunary case. A part of these conjectures were solved by Gaillard and Lefèvre: in \cite{GaLe} they obtain that for all $p\in(1,\infty)$, the continuity of $i_{p,p}$ is equivalent to $\mu$ being sublinear. They also work in the lacunary case.  

The present article addresses the mentioned conjectures and further generalize to a wider class of measures (and embeddings), by considering $\beta >0$, and not only the case $\beta=1$ which leads to a complete characterization in the non-singular case $\beta \geqslant 1$, and also gives a complete characterization in the singular case $\beta >0$. Our method is to follow the ideas of \cite{CFT,NT,GL}. A key step lies in obtaining a generalized Gurariy-Macaev adapted to weighted $L^p$ spaces. 

\subsection{Main results}

The main focus of this paper is the characterization of the measures $\mu \in \mathcal{M}_{x^{\beta}}$ in terms of Carleson embeddings $M^{\frac{p}{\beta}}_{\Lambda} \hookrightarrow L^p(\mathrm{d}\mu)$. Our first result is a complete characterization of this kind when $\beta \geqslant 1$, which will be referred to as the \textit{non-singular case}.

\begin{Ntheorem}[Non-singular case]\label{thm:non-sing}
Let $\mu$ be a positive Borel measure on $[0,1]$ and let $\beta\geqslant 1$. 
Let also $\Lambda$ be a quasi-lacunary and subgeometric sequence with parameters $q, N$.  
Then the following assertions are equivalent:
\begin{enumerate}[(i)]
	\item The measure $\mu$ belongs to $\mathcal{M}_{x^\beta}$ ;
	\item For any $p\geqslant \beta$, the embedding $M_{\Lambda}^{\frac{p}{\beta}} \hookrightarrow L^p(\mathrm{d}\mu)$ is continuous, that is  for all $f \in M_{\Lambda}$ we have  
	\begin{equation}
		\label{eq:embedding}
		\|f\|_{L^p(\mathrm{d}\mu)}\lesssim \|f\|_{\frac{p}{\beta}},
	\end{equation}
	where the implicit constant does not depend on $f$, but only on $p,\beta$ and also $q, N$.  
\end{enumerate}
\end{Ntheorem}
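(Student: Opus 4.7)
The implication (ii) $\Rightarrow$ (i) is the easy direction, obtained by testing the embedding on the monomials $t^\lambda$ with $\lambda \in \Lambda$. Indeed, one computes $\|t^\lambda\|_{p/\beta} = (p\lambda/\beta+1)^{-\beta/p} \sim \lambda^{-\beta/p}$, whereas
\[
	\|t^\lambda\|_{L^p(\mathrm{d}\mu)}^p \geq \int_{1-1/\lambda}^1 t^{p\lambda}\,\mathrm{d}\mu \geq e^{-p}\,\mu\bigl([1-1/\lambda,1]\bigr),
\]
so (ii) gives $\mu([1-1/\lambda,1]) \lesssim \lambda^{-\beta}$ along every $\lambda \in \Lambda$. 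The subgeometric bound $\lambda_{k+1} \leq q^{2N}\lambda_k$, together with the monotonicity of $\varepsilon \mapsto \mu([1-\varepsilon,1])$, then upgrades this to $\mu([1-\varepsilon,1]) \lesssim \varepsilon^{\beta}$ for all $\varepsilon \in (0,1)$, at the price of a multiplicative constant $q^{2N\beta}$.

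For the converse (i) $\Rightarrow$ (ii), I would argue in three stages. First, for a single monomial, the layer-cake identity combined with the hypothesis $\mu([1-u,1]) \lesssim u^\beta$ yields
\[
	\|t^\lambda\|_{L^p(\mathrm{d}\mu)}^p = p\lambda \int_0^1 (1-u)^{p\lambda-1}\mu([1-u,1])\,\mathrm{d}u \lesssim p\lambda\, B(\beta+1,p\lambda) \lesssim \lambda^{-\beta},
\]
which matches $\|t^\lambda\|_{p/\beta}^{p} \sim \lambda^{-\beta}$. Second, relying on the fact that each $F_k$ is finite-dimensional (dimension $\leq N$) and that all monomials $t^\lambda$, $\lambda \in E_k$, have comparable sizes on $[0,1]$ thanks to the ratio bound $\lambda_{n_{k+1}}/(\lambda_{n_k}+1) \leq q^{2N}$, I would extend the previous estimate to a per-block inequality $\|f_k\|_{L^p(\mathrm{d}\mu)} \lesssim \|f_k\|_{p/\beta}$ for every $f_k \in F_k$, with a constant depending only on $p$, $\beta$, $q$, $N$.

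The main step, which the introduction advertises as the crux of the paper, is a \emph{weighted} Gurariy--Macaev inequality adapted to the quasi-lacunary decomposition, of the form
\[
	\|f\|_{L^p(\mathrm{d}\mu)}^p \lesssim \sum_{k \geq 0}\|f_k\|_{L^p(\mathrm{d}\mu)}^p.
\]
This is the step I expect to be the hardest: one has to reprove, at the $L^p(\mathrm{d}\mu)$ level, the block independence that is standard for lacunary Müntz sequences in unweighted $L^p$, following but genuinely generalizing the arguments of \cite{NT,GaLe} to handle an arbitrary $\mu \in \mathcal{M}_{x^\beta}$.

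Granting this key inequality, the proof concludes as follows. Combining it with the per-block estimate gives $\|f\|_{L^p(\mathrm{d}\mu)}^p \lesssim \sum_k \|f_k\|_{p/\beta}^p$. The \emph{classical} (unweighted) Gurariy--Macaev inequality in $L^{p/\beta}([0,1])$ yields $\sum_k \|f_k\|_{p/\beta}^{p/\beta} \sim \|f\|_{p/\beta}^{p/\beta}$. Since $\beta \geq 1$, one has the elementary $\ell^{p/\beta} \hookrightarrow \ell^{p}$ embedding, hence
\[
	\sum_{k\geq 0}\|f_k\|_{p/\beta}^{p} \leq \Bigl(\sum_{k\geq 0}\|f_k\|_{p/\beta}^{p/\beta}\Bigr)^{\beta} \lesssim \|f\|_{p/\beta}^{p},
\]
which gives \eqref{eq:embedding}. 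It is precisely this $\ell^{p/\beta} \hookrightarrow \ell^p$ step that fails when $\beta < 1$, explaining why the singular regime requires a different treatment.
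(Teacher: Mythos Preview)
Your implication (ii) $\Rightarrow$ (i) is fine and matches the paper. The problem is your ``key step'' in the direction (i) $\Rightarrow$ (ii): the inequality
\[
    \|f\|_{L^p(\mathrm{d}\mu)}^p \lesssim \sum_{k\geq 0}\|f_k\|_{L^p(\mathrm{d}\mu)}^p
\]
is \emph{false} for a general $\mu \in \mathcal{M}_{x^\beta}$, so this route cannot be completed. Take $\mu = \delta_{t_0}$ for any fixed $t_0 \in (0,1)$; this measure lies in every $\mathcal{M}_{x^\beta}$ since $\mu([1-\varepsilon,1])=0$ for $\varepsilon < 1-t_0$. With $\Lambda$ lacunary (say $\lambda_k = 2^k$, $N=1$) and $f_k(t)=a_k t^{\lambda_k}$, your inequality reads $\bigl|\sum_k a_k t_0^{\lambda_k}\bigr|^p \lesssim \sum_k |a_k|^p t_0^{p\lambda_k}$. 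Choosing $a_k = t_0^{-\lambda_k}$ for $k=1,\dots,K$ and $a_k=0$ otherwise gives left side $K^p$ and right side $K$, so the ratio is $K^{p-1}\to\infty$. In short, a Gurariy--Macaev upper bound at the $L^p(\mathrm{d}\mu)$ level cannot hold, because $\mu$ may concentrate where the blocks do not decouple.

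What the paper actually does is avoid $L^p(\mathrm{d}\mu)$ block norms altogether. It first uses the pointwise bound $|f_k(t)|\lesssim \|f_k\|_\infty\, t^{\lambda_{n_k}/N}$ to dominate $|f|$ by a \emph{positive} lacunary polynomial, and then uses the integration-by-parts lemma to pass from $\mu$ to the explicit weight $(1-t)^{\beta-1}\,\mathrm{d}t$. The decoupling is then carried out against this specific weight (equivalently, via the multilinear/weighted Gurariy--Macaev estimate for $\nu_{\beta-1}$), not against $\mu$. Concretely, for $p=\beta=n\in\mathbb{N}$ one expands the $n$-fold product, recognizes Beta integrals, and closes with AM--GM and the classical Gurariy--Macaev; the remaining values of $p$ are reached by H\"older interpolation between integer cases and a dyadic induction in $p$. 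Your per-block estimate and the final $\ell^{p/\beta}\hookrightarrow\ell^p$ step are correct in spirit, but they must be fed by $\sum_k \|f_k\|_{L^p(\mathrm{d}\nu_{\beta-1})}^p$ (or equivalently $\|f_k\|_\infty$ with the right $\lambda$-weights), not by $\sum_k \|f_k\|_{L^p(\mathrm{d}\mu)}^p$.
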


In the case $0<\beta<1$, which we refer to as the \textit{singular case}, we prove the following result. 

\begin{Ntheorem}[Singular Case]\label{thm:sing}
Let $\mu$ be a positive Borel measure on $[0,1]$ and let $\beta \in (0,1)$. Let also $\Lambda$ be a quasi-lacunary and subgeometric sequence with parameters $q, N$. 
Then the following assertions are equivalent:
\begin{enumerate}[(i)]
	\item The measure $\mu$ satisfies 
	\[
		\displaystyle\int_{[0,1]}\Big(\int_{[0,1]}\frac{\mathrm{d}\mu(t)}{(1-\rho t)}\Big)^{\frac{1}{1-\beta}}\,\mathrm{d}\rho<+\infty;	
	\]
	\item For any $p\geqslant 1$ the following continuous embedding holds: $M_{\Lambda}^\frac{p}{\beta} \hookrightarrow L^p(\mathrm{d}\mu)$; 
	\item For any $p\geqslant 1$ the following summability condition holds:   
	\[
    	\sum_{k\geq 0} \lambda_{k}^{\frac{\beta}{1-\beta}}\Big(\int_{[0,1]}t^{p\lambda_{k}}\,\mathrm{d}\mu(t)\Big)^{\frac{1}{1-\beta}}<+\infty.
	\]
\end{enumerate}
\end{Ntheorem}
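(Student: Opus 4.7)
The plan is to prove the three assertions equivalent by first reducing (iii) to the case $p=1$. Since $t \mapsto t^{p\lambda_{k}}$ is decreasing in $p$ on $[0,1]$, the moments $m_{p\lambda_{k}} := \int t^{p\lambda_{k}}\,\mathrm{d}\mu$ are decreasing in $p$, so (iii) holds for every $p \geqslant 1$ if and only if it holds at $p=1$. It therefore suffices to establish (i) $\Leftrightarrow$ (iii) at $p=1$ together with (ii) at $p$ $\Leftrightarrow$ (iii) at $p$ for each $p \geqslant 1$, after which the implications combine.

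For (i) $\Leftrightarrow$ (iii) at $p=1$, set $q := \frac{1}{1-\beta} > 1$ and expand
\[
\phi(\rho) := \int \frac{\mathrm{d}\mu(t)}{1-\rho t} = \sum_{n \geqslant 0} m_{n}\, \rho^{n},
\]
with $(m_{n})$ decreasing in $n$. The quasi-lacunary structure of $\Lambda$ partitions $\mathbb{N}$ into integer blocks $I_{k} := [\lfloor \lambda_{n_{k}} \rfloor, \lfloor \lambda_{n_{k+1}} \rfloor)$ of length $\asymp \lambda_{n_{k}}$. Monotonicity of both $m_{n}$ and $n \mapsto \rho^{n}$ gives the block comparison $\sum_{n \in I_{k}} m_{n}\, \rho^{n} \asymp \lambda_{n_{k}}\, m_{\lambda_{n_{k}}}\, \rho^{\lambda_{n_{k}}}$ (the lower bound absorbs an index shift via the bounded ratio $\lambda_{n_{k+1}} \asymp \lambda_{n_{k}}$), hence $\phi(\rho) \asymp \sum_{k} \lambda_{n_{k}}\, m_{\lambda_{n_{k}}}\, \rho^{\lambda_{n_{k}}}$. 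This is a lacunary power series with non-negative coefficients, so the classical Gurariy-Macaev equivalence in $L^{q}([0,1], \mathrm{d}\rho)$ yields
\[
\int_{0}^{1} \phi(\rho)^{q}\, \mathrm{d}\rho \asymp \sum_{k} \bigl(\lambda_{n_{k}}\, m_{\lambda_{n_{k}}}\bigr)^{q}\, \lambda_{n_{k}}^{-1} = \sum_{k} \lambda_{n_{k}}^{q-1}\, m_{\lambda_{n_{k}}}^{q},
\]
which, since $q-1 = \frac{\beta}{1-\beta}$, is precisely (iii) at $p=1$.

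For (ii) at $p$ $\Leftrightarrow$ (iii) at $p$, apply the generalized Gurariy-Macaev equivalence for weighted $L^{p}$ spaces developed earlier in the paper to both sides. For $f = \sum_{k} f_{k}$ with $f_{k} \in F_{k}$ represented by a scalar $a_{k}$ encoding its size,
\[
\|f\|_{L^{p}(\mathrm{d}\mu)}^{p} \asymp \sum_{k} |a_{k}|^{p}\, m_{p\lambda_{n_{k}}}, \qquad \|f\|_{\frac{p}{\beta}}^{\frac{p}{\beta}} \asymp \sum_{k} |a_{k}|^{\frac{p}{\beta}}\, \lambda_{n_{k}}^{-1},
\]
the finite-dimensional blocks $F_{k}$ (of dimension at most $N$) entering only in the implicit constants. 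Setting $b_{k} := |a_{k}|^{p/\beta}\, \lambda_{n_{k}}^{-1}$ and $w_{k} := \lambda_{n_{k}}^{\beta}\, m_{p\lambda_{n_{k}}}$, the embedding (ii) becomes the scalar inequality
\[
\sum_{k} b_{k}^{\beta}\, w_{k} \lesssim \Bigl(\sum_{k} b_{k}\Bigr)^{\!\beta} \quad \text{for every non-negative sequence } (b_{k}).
\]
Hölder with exponents $\frac{1}{\beta}$ and $\frac{1}{1-\beta}$ shows this is equivalent to $\sum_{k} w_{k}^{1/(1-\beta)} < \infty$ (the converse direction being obtained by testing the truncation $b_{k} = w_{k}^{1/(1-\beta)}$ for $k \leqslant K$, which forces $\bigl(\sum_{k \leqslant K} w_{k}^{1/(1-\beta)}\bigr)^{1-\beta} \leqslant C$ and letting $K \to \infty$). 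Unwinding, this summability is exactly (iii) at exponent $p$.

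The main obstacle is the block-wise Gurariy-Macaev step: one must guarantee that on each block $F_{k}$, the $L^{p}(\mathrm{d}\mu)$ and $L^{p/\beta}$ norms are uniformly comparable to the scalar quantities $|a_{k}|^{p} m_{p\lambda_{n_{k}}}$ and $|a_{k}|^{p/\beta}/\lambda_{n_{k}}$ associated with a representative monomial, with constants depending only on $q, N, p, \beta$. This block-uniform control is exactly what the generalized Gurariy-Macaev for weighted $L^{p}$ spaces provides, and it is the non-trivial ingredient that makes the quasi-lacunary (rather than merely lacunary) setting tractable.
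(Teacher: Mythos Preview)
Your treatment of (i) $\Leftrightarrow$ (iii) and of (ii) $\Rightarrow$ (iii) is essentially the paper's argument in compressed form and is fine. The genuine gap is in your (iii) $\Rightarrow$ (ii). You write
\[
\|f\|_{L^{p}(\mathrm{d}\mu)}^{p} \asymp \sum_{k} |a_{k}|^{p}\, m_{p\lambda_{n_{k}}}
\]
and attribute this to ``the generalized Gurariy--Macaev equivalence for weighted $L^{p}$ spaces developed earlier in the paper''. But Theorem~C in the paper is proved only for the specific weights $\mathrm{d}\nu_{\alpha}=(1-t)^{\alpha}\,\mathrm{d}t$; nothing in the paper gives a two-sided Gurariy--Macaev for $L^{p}(\mathrm{d}\mu)$ with an arbitrary $\mu$. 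The direction you actually need for (iii) $\Rightarrow$ (ii) is the \emph{upper} bound $\|f\|_{L^{p}(\mathrm{d}\mu)}^{p}\lesssim \sum_{k}|a_{k}|^{p}m_{p\lambda_{n_{k}}}$, and this is precisely the hard, unproved statement: for non-negative coefficients the cross terms in $\bigl(\sum_{k}a_{k}t^{\lambda_{k}}\bigr)^{p}$ go the wrong way, and there is no reason the off-diagonal interactions $\int t^{\lambda_{i}+\lambda_{j}+\cdots}\,\mathrm{d}\mu$ should be controlled by the diagonal moments $m_{p\lambda_{n_{k}}}$ without further information on $\mu$. (Already $\mu=\delta_{1}$ shows the estimate can fail in general.) Your closing paragraph identifies the difficulty but misattributes its resolution: the block-uniform comparison $\|f_{k}\|_{L^{p}(\mathrm{d}\mu)}\asymp |a_{k}|\,m_{p\lambda_{n_{k}}}^{1/p}$ is also unavailable for general $\mu$, and in any case would only handle the diagonal.

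The paper avoids this obstruction entirely. For (i) $\Rightarrow$ (ii) it does \emph{not} attempt a Gurariy--Macaev in $L^{p}(\mathrm{d}\mu)$; instead it uses the integration identity $\int|f|^{p}\,\mathrm{d}\mu\lesssim\int_{0}^{1}\int|f'(\rho t)||f(\rho t)|^{p-1}\,\mathrm{d}\mu(t)\,\mathrm{d}\rho$, bounds $|f'(\rho t)|$ and $|f(\rho t)|$ pointwise by lacunary series in $\rho$ alone times the kernel $(1-\rho t)^{-1}$, and then applies H\"older in $\rho$ to factor the estimate as
\[
\|f\|_{L^{p}(\mathrm{d}\mu)}^{p}\ \lesssim\ \Bigl\|\sum_{k}\|f_{k}\|_{\infty}\rho^{c\lambda_{k}}\Bigr\|_{L^{p/\beta}(\mathrm{d}\rho)}^{p}\ \cdot\ \Bigl\|\int\frac{\mathrm{d}\mu(t)}{1-\rho t}\Bigr\|_{L^{1/(1-\beta)}(\mathrm{d}\rho)}.
\]
The second factor is finite by (i), and the first is handled by the ordinary Gurariy--Macaev in the Lebesgue variable $\rho$. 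This manoeuvre decouples the measure $\mu$ from the M\"untz structure and is the missing idea in your proposal.
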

This result calls for several remarks. 

\begin{remark}
	As $(\lambda_{k})_{k\geq 0}$ is a sub-geometric, (\textit{iii}) implies that $\int_{[0,1]}t^{p\lambda_{k}}\,\mathrm{d}\mu(t) = o(\lambda_k^{-{\beta}})$, which is easily seen to imply that $\mu \in \mathcal{M}_{x^{\beta}}$, see for instance the proof of \cref{thm:non-sing}. Therefore if the Carleson embedding (\textit{ii}) holds, we have $\mu \in \mathcal{M}_{x^{\beta}}$. 
\end{remark}

\begin{remark}
	On the other hand, $\mu \in \mathcal{M}_{x^{\beta}}$  is not a sufficient condition for (\textit{iii}) to hold. For instance, consider $\nu_{\beta-1}$ which belongs to $\mathcal{M}_{x^{\beta}}$ but satisfies 
	\[
		\int_{[0,1]}t^{\lambda_k}\,\mathrm{d}\nu_{\beta-1}(t)=\int_{[0,1]}t^{\lambda_{k}}(1-t)^{\beta-1}\,\mathrm{d}t \approx \lambda_{k}^{-\beta},
	\]
	so that the series in (\textit{iii}) diverges. Hence, by the equivalence in the theorem it proves that $L^{\frac{p}{\beta}}$ does \textit{not} embed into $L^{p}(\mathrm{d}\nu_{\beta-1})$.
\end{remark}

\begin{remark}
	However, for any $p\geq 1$ and any $q>\frac{p}{\beta}$ the embedding $M_{\Lambda}^q \hookrightarrow L^{p}(\mathrm{d}\nu_{\mu})$ is continuous as soon as $\mu \in \mathcal{M}_{x^{\beta}}$. In order to see it, take $\beta_{\varepsilon}:=\beta - \varepsilon$, and use \cref{lemm:IPP} to see that
	\[
		\int_{[0,1]}t^{p\lambda_{k}}\,\mathrm{d}\mu(t) \lesssim \int_{[0,1]}t^{p\lambda_{k}}(1-t)^{\beta - 1}\,\mathrm{d}t \lesssim \lambda_k^{-\beta},  
	\] 
	which yields
	\[
    	\sum_{k\geq 0} \lambda_{k}^{\frac{\beta_{\varepsilon}}{1-\beta_{\varepsilon}}}\Big(\int_{[0,1]}t^{p\lambda_{k}}\,\mathrm{d}\mu(t)\Big)^{\frac{1}{1-\beta_{\varepsilon}}}<+\infty, 
	\]
	and (\textit{iii}) implies the claimed result. Therefore we can see that the difficulty of this result is carried by the limiting cases of the Carleson embeddings.

In the following, we show what we can directly obtain using Hölder's inequality, which indicates that the limiting cases are a more subtle matter of cancellation combined with a precise multiscale analysis.
\begin{enumerate}
\item We first treat the case:  $\alpha\geq 0$ and $p>q$.
If $\alpha<\displaystyle \frac{p}{q}-1$, we have by Hölder's inequality:
\begin{align*}
 \int_{[0,1]}\vert f(t)\vert^{q}dt & =\int_{[0,1]} \vert f(t) \vert^{q}(1-t)^{\alpha\frac{q}{p}} (1-t)^{-\alpha\frac{q}{p}}\,\mathrm{d}t\\
& \leq \left( \int_{[0,1]}\vert f(t) \vert ^{p}(1-t)^{\alpha}\,\mathrm{d}t\right)^{\frac{q}{p}} \left(\int_{[0,1]} (1-t)^{-\alpha\frac{q}{p} \frac{p}{p-q}}\,\mathrm{d}t\right)^{1-\frac{q}{p}}\\
& \lesssim \left( \int_{[0,1]}\vert f(t) \vert ^{p}(1-t)^{\alpha}\,\mathrm{d}t\right)^{\frac{q}{p}}.
\end{align*}
Hence, in this case, we obtain $\|f\|_{L^q}\lesssim \|f\|_ {L^{p}(d\nu_{\alpha})}$.
\item Then, we treat the case:  $\alpha\leq 0$ and $q>p$.
If $\alpha>\displaystyle \frac{p}{q}-1$, we apply Hölder's inequality to obtain:
\begin{align*}
 \int_{[0,1]}\vert f(t)\vert^{p}(1-t)^{\alpha}\,\mathrm{d}t & \leq \left(\int_{[0,1]} \vert f(t)\vert^{q}\,\mathrm{d}t\right)^{\frac{p}{q}} \left(\int_{[0,1]}\frac{\mathrm{d}t}{(1-t)^{-\alpha\frac{q}{q-p}}}\right)^{1-\frac{p}{q}}\\
& \lesssim \left(\int_{[0,1]} \vert f(t)\vert^{q}\,\mathrm{d}t\right)^{\frac{p}{q}},
\end{align*}
which means $\|f\|_ {L^{p}(d\nu_{\alpha})}\lesssim \|f\|_ {q}$.
\end{enumerate}
\end{remark}

\begin{remark}
	The case $\mathrm{d}\mu=\mathrm{d}\nu_{\alpha}$ for $\alpha \in (-1,0)$ gives an example of reverse Carleson embedding $M_{\Lambda} \cap L^p(\mathrm{d}\nu_{\alpha}) \hookrightarrow L^{\frac{p}{\beta}}$. Let us explain how this is obtained. Consider $\alpha \in (-1,0)$ and let us write $\beta=1+\alpha$. Observe that for any $f, g \in M_{\Lambda}$ there holds 
	\begin{multline}
		\label{eq.dual-bound}
		\left\vert \int_{[0,1]} f(t)g(t) \,\mathrm{d}t \right\vert = \left\vert \int_{[0,1]}f(t)(1-t)^{\frac{\alpha}{p}}g(t)(1-t)^{\frac{-\alpha}{p}}\,\mathrm{d}t \right\vert \leq \|f\|_{L^{p}(\mathrm{d}\nu_{\alpha})}\|g\|_{L^{\frac{p}{p-1}}(\mathrm{d}\nu_{-\frac{\alpha}{p-1}})} \\
		\lesssim \|f\|_{L^{p}(\mathrm{d}\nu_{\alpha})}\|g\|_{L^{\frac{p}{p-\beta}}},
	\end{multline}
 	where we have used Hölder's inequality and \cref{thm:non-sing} as $\frac{p}{p-\beta}\geq 1$ and $\frac{-\alpha}{p-1}\geqslant 0$. We claim that it is possible to infer the bound  
	\begin{equation}
		\label{eq.dual-statement}
		\|f\|_{L^{\frac{p}{\beta}}} \lesssim \|f\|_{L^{p}(\mathrm{d}\nu_{\alpha})}.
	\end{equation}
	Let us briefly explain how to obtain this rigorously, and let us mention that even if $\frac{p}{p-\beta}$ is the Hölder conjuguate exponent of $\frac{p}{\beta}$, we cannot use a standard duality argument: that is a part of the difficulty. 
	Nevertheless, as we shall see in the latter, Müntz spaces enjoy a kind of interpolation property viewed as weighted $\ell^{p}$ spaces.
	We give some hints to how to proceed but a similar argument is presented in details in the proof of (\textit{ii}) $\Rightarrow$ (\textit{iii}). 

	To prove \eqref{eq.dual-statement} take $f \in M_{\Lambda}^{\frac{p}{\beta}}$ and introduce the following functions with positive coefficients: 
	\[ 
		h_{1}=\sum_{k \geq 0}\|f_{k}\|_{L^{\frac{p}{\beta}}}\lambda_{k}^{\frac{\beta}{p}}t^{\lambda_{k}} \text{ and } h_{2}=\sum_{k \geq 0}\|f_{k}\|_{L^{\frac{p}{\beta}}}^{\frac{p-\beta}{\beta}}\lambda_{k}^{\frac{p-\beta}{p}}t^{\lambda_{k}}, 
	\] 
	which are both Müntz polynomials, and satisfy (discarding non-diagonal terms)
	\begin{equation}
		\label{eq.boundh1}
		\int_{[0,1]}h_{1}h_{2} \mathrm{d}t \geq \sum_{k\geq 0}\|f_{k}\|_{L^r}^{r}\lambda_{k}\int_{[0,1]}t^{2\lambda_{k}}\,\mathrm{d}t \gtrsim  \|f\|_{L^r}^{r},
	\end{equation}
	where we have used \cref{thm:Lp-frames}. Applying this theorem twice (the first time to $h_2=\sum_{k\geqslant 0} r_k$ where $r_k(t)=\|f_{k}\|_{L^{\frac{p}{\beta}}}^{\frac{p-\beta}{\beta}}\lambda_{k}^{\frac{p-\beta}{p}}t^{\lambda_{k}}$) also gives 
	\begin{equation}
		\label{eq.boundh2}
		\|h_2\|^{\frac{p}{p-\beta}}_{L^{\frac{p}{p-\beta}}} \lesssim \sum_{k\geq 0} \|f_k\|_{L^{\frac{p}{\beta}}}^{\frac{p}{\beta}} \lesssim \|f\|_{L^{\frac{p}{\beta}}}^{\frac{p}{\beta}}. 	
	\end{equation}
	We can now use \eqref{eq.dual-bound} to $f=h_1$ and $g=h_2$, so that in conjuction with \eqref{eq.boundh1} and \eqref{eq.boundh2} we obtain \eqref{eq.dual-statement} when $f=h_1$. In order to conclude, remark that applying \cref{thm:muntz-bound} combined with \cref{prop:generalized-bernstein}, we see that any function $f \in M_{\Lambda}^{\frac{p}{\beta}}$ is pointwise bounded by a function of the form $h_1$.  
	It remains to remark that thanks to \cref{thm:Lp-frames} and \cref{lem:newm} we have 
	\[
		\|h_1\|^p_{L^p(\mathrm{d}\nu_{\alpha})} \lesssim \sum_{k\geqslant 0} \|f_k\|_{L^{\frac{p}{\beta}}}^p\lambda_k^{\beta}\lambda_k^{-\beta} \lesssim \sum_{k\geqslant 0} \|f_k\|_{L^{p}(\mathrm{d}\nu_{\alpha})}^p\lesssim \|f\|^p_{L^p(\mathrm{d}\nu_{\alpha})}.
	\]
\end{remark}

One key estimate that we will use in the proof of \cref{thm:sing} and \cref{thm:non-sing} it the following two-sided inequality, which is a generalization of the Gurariy-Macaev theorem \cite{GM}, which we recall in \cref{sec.prelim}.

\begin{Ntheorem}[$L^p$ decoupling estimates]\label{thm:Lp-frames} Let $p\in [1,\infty)$, $\alpha > -1$ and $\mathrm{d}\nu_{\alpha} = (1-x)^{\alpha}\,\mathrm{d}x$. Then there exists two constants $C_1,C_2 >0$ such that for all $f_k \in F_k$ there holds 
\[
	C_1\left(\sum_{k\geqslant 0}\|f_k\|_{L^p(\mathrm{d}\nu_\alpha)}^p\right)^{\frac{1}{p}}\leq \left\|\sum_{k\geqslant 0}f_k\right\|_{L^p(\mathrm{d}\nu_\alpha)}\leq C_2\left(\sum_{k\geqslant 0}\|f_k\|_{L^p(\mathrm{d}\nu_\alpha)}^p\right)^{\frac{1}{p}}. 
\]
\end{Ntheorem}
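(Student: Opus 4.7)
The plan is to adapt the classical Gurariy-Macaev strategy \cite{GM} to the weighted measure $\mathrm{d}\nu_\alpha = (1-t)^\alpha\,\mathrm{d}t$, exploiting the key geometric fact that $t^\lambda$ concentrates in a window of width $\sim 1/\lambda$ near $t=1$ and that quasi-lacunarity makes these windows nest geometrically.

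First I would reduce to a lacunary, one-monomial-per-block situation. Each $F_k$ has dimension at most $N$ and all exponents in $E_k$ are comparable up to a factor $q^{2N}$; hence on $F_k$ all norms are equivalent with constants depending only on $p,\alpha,N,q$, and up to such a harmless constant we can replace $f_k$ by $a_k t^{\lambda_{n_k}}$, where $(\lambda_{n_k})$ is lacunary with ratio at least $q$. A direct Beta-function computation yields
\[
	\|t^\lambda\|_{L^p(\mathrm{d}\nu_\alpha)}^p = B(p\lambda+1,\alpha+1) \asymp \lambda^{-(\alpha+1)},
\]
confirming that the weighted mass of $t^\lambda$ concentrates in $[1-C/\lambda,1]$. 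I would then introduce the dyadic-type partition $I_k := [1-\lambda_{n_k}^{-1},\,1-\lambda_{n_{k+1}}^{-1})$ (with the leftover piece $[0,1-\lambda_{n_0}^{-1})$ absorbed into $I_{-1}$), which is geometric by the lacunarity assumption.

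For the upper bound, on $I_k$ one has $t \leq e^{-1/\lambda_{n_k}}$, so for $j>k$ the pointwise bound $t^{\lambda_{n_j}} \leq e^{-\lambda_{n_j}/\lambda_{n_k}} \leq e^{-q^{j-k}}$ yields super-exponential decay, while for $j<k$ a direct comparison of incomplete Beta functions shows $\int_{I_k} t^{p\lambda_{n_j}}(1-t)^\alpha\,\mathrm{d}t \lesssim q^{-c(k-j)}\|t^{\lambda_{n_j}}\|_{L^p(\mathrm{d}\nu_\alpha)}^p$. Combining these, the contribution of each $|f_j|^p$ on $I_k$ is controlled by a geometrically decaying matrix $M_{kj}=q^{-c|k-j|}$; expanding $|\sum_j f_j|^p$ via Minkowski and summing over $k$, the upper bound reduces to Young's inequality for the $\ell^p$-convolution with the summable kernel $M$.

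For the lower bound, on $I_k$ the term $f_k$ contributes a positive fraction of $\|f_k\|_{L^p(\mathrm{d}\nu_\alpha)}^p$, while the other $f_j$ are controlled by the same geometric estimates. A reverse-triangle argument gives
\[
	\int_{I_k} \bigl|\textstyle\sum_j f_j\bigr|^p\,\mathrm{d}\nu_\alpha \geq c\,\|f_k\|_{L^p(\mathrm{d}\nu_\alpha)}^p - C\sum_{j\neq k} q^{-c|j-k|}\|f_j\|_{L^p(\mathrm{d}\nu_\alpha)}^p,
\]
and summing in $k$ while absorbing the off-diagonal term yields the lower bound. The main obstacle is uniformity in $\alpha\in(-1,\infty)$: for $\alpha$ close to $-1$ the weight concentrates sharply at $t=1$, shrinking the effective peak window and forcing a careful tracking of the Beta-function asymptotics rather than hiding them in constants. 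Equally, the endpoint $p=1$ must be handled without any Khintchine-type randomization, which is precisely why the pointwise localization strategy above is preferable to a probabilistic one.
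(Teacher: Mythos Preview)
Your upper-bound argument is essentially correct and parallel to the paper's: the paper does not localise on intervals but instead packages the same Schur-type control of the interaction matrix into the multilinear Theorem~\ref{thm:multilinear} and then specialises to $f_1=\dots=f_n=f$. One quibble: the sentence ``replace $f_k$ by $a_kt^{\lambda_{n_k}}$'' is false as written, since such a replacement does not preserve $\bigl\|\sum_k f_k\bigr\|$; what you actually need (and implicitly use) is only the one-sided pointwise bound $|f_k(t)|\lesssim\|f_k\|_\infty\,t^{(\lambda_{n_k}+1)/N}$ of Theorem~\ref{thm:muntz-bound}.

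The lower bound, however, has a genuine gap. Summing your reverse-triangle estimate over $k$ yields
\[
\Bigl\|\sum_k f_k\Bigr\|_{L^p(\mathrm d\nu_\alpha)}^p\;\ge\;\Bigl(c_0-C\sum_{m\ge 1}q^{-(\alpha+1)m}\Bigr)\sum_k\|f_k\|_{L^p(\mathrm d\nu_\alpha)}^p,
\]
because your own incomplete-Beta computation for the tail $j<k$ gives precisely the decay rate $(\lambda_{n_j}/\lambda_{n_k})^{\alpha+1}$. The parenthesis is positive only when $q^{\alpha+1}$ is large enough; for a generic quasi-lacunary ratio $q>1$, and especially for $\alpha$ close to $-1$ (the very case you flag as the obstacle), the off-diagonal sum dominates and there is nothing to absorb. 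The natural cure---group $L$ consecutive blocks so the effective ratio becomes $q^L$---then requires the uniform-in-$m$ equivalence $\sum_{k\in\text{group}}\|f_k\|^p\lesssim\|\tilde f_m\|^p$ on each super-block, which is not a free finite-dimensional fact (the constant must not depend on $m$) and is essentially the statement you are proving. The paper sidesteps this entirely: for $\alpha\ge0$ it uses the $T_\rho$-dilation machinery (Propositions~\ref{prop:GL633} and~\ref{prop:sum-control}) to obtain $\|f_k\|\lesssim\|f\|$ in an auxiliary norm with constants depending only on $q,N,p,\alpha$ and no smallness condition, and for $\alpha\in(-1,0)$ it passes to $f'$ via Lemma~\ref{lem:derivative-switch} so as to land back in the nonsingular range $\alpha+p\ge0$.
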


\begin{remark} When $\alpha=0$ note that we recover the usual Gurariy-Macaev \cite{GM}.
\end{remark}

The proof of the upper-bound in \cref{thm:Lp-frames} relies on the following multilinear estimate, which we believe to be interesting on its own. 

\begin{Ntheorem}[Multilinear estimate]\label{thm:multilinear}
Let $\Lambda$ be a quasi-lacunary sequence. Let $\alpha>-1$ and $\mu \in \mathcal{M}_{x^{\alpha +1}}$. 
Let also $p_{1},\dots , p_{n} \in (1,\infty)$ such that $\frac{1}{p_{1}}+\ldots+\frac{1}{p_{n}}=1$. 
For any $j\in \{1, \dots, n\}$, let $f_j \in M_{\Lambda}$.
Then there holds 
\[
	\left\vert \int_{[0,1]}\prod_{j=1}^{n}f_{j}\,\mathrm{d}\mu \right\vert \lesssim \prod_{j=1}^{n} \left(\sum_{k\geqslant 0} \|f_{i,k}\|^{p_{k}}_{L^{p_{k}}(\mathrm{d}\nu_{\alpha})}\right)^{\frac{1}{p_{k}}}, 
\]
where we have written $f_j=\sum_{k\geqslant 0} f_{j,k}$, where $f_{j,k} \in F_k$, and where $\mathrm{d}\nu_{\alpha}=(1-x)^{\alpha}\mathrm{d}x$. 
\end{Ntheorem}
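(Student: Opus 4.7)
The plan is to localize the integral on dyadic pieces $I_k$ adapted to the quasi-lacunary scales, bound each $f_j$ in $L^\infty(I_k)$ by a decoupled sum of block norms weighted by a geometrically decaying kernel $K(k,m)$, and close the argument via multilinear Hölder in $\mathrm{d}\mu$ followed by Schur's test for $K$.

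\emph{Dyadic setup and pointwise bounds.} I introduce $I_k := [1-\lambda_{n_k}^{-1},\, 1-\lambda_{n_{k+1}}^{-1})$, so that $\mu(I_k)\lesssim \lambda_{n_k}^{-(\alpha+1)}$ by hypothesis, while $\nu_\alpha(I_k)\asymp \lambda_{n_k}^{-(\alpha+1)}$. Using \cref{prop:generalized-bernstein}, whose content for a single monomial reduces to the beta integral $\int_0^1 t^{p\lambda}(1-t)^\alpha\,\mathrm{d}t\asymp \lambda^{-(\alpha+1)}$, every block element obeys
\[
 \|f_{j,m}\|_{L^\infty([0,1])} \lesssim \lambda_{n_m}^{(\alpha+1)/p_j}\,\|f_{j,m}\|_{L^{p_j}(\mathrm{d}\nu_\alpha)},
\]
and on $I_k$ with $m>k$ the quasi-lacunary gap forces $t^{\lambda_{n_m}}\lesssim \exp(-q^{m-k-1})$. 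Combining these ingredients with the comparisons $\lambda_{n_m}\le q^{-(k-m)}\lambda_{n_k}$ for $m\le k$ and $\lambda_{n_m}\le q^{2N(m-k)}\lambda_{n_k}$ for $m>k$, the triangle inequality applied to $f_j=\sum_m f_{j,m}$ on $I_k$ produces
\[
 \|f_j\|_{L^\infty(I_k)} \lesssim \lambda_{n_k}^{(\alpha+1)/p_j}\sum_m K(k,m)\,\|f_{j,m}\|_{L^{p_j}(\mathrm{d}\nu_\alpha)},
\]
where $K(k,m)$ decays geometrically in $|k-m|$ and has uniformly bounded row and column $\ell^1$ norms.

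\emph{Multilinear Hölder and Schur.} Multilinear Hölder on each $I_k$ with exponents $p_1,\dots,p_n$ gives $\int_{I_k}\prod_j f_j\,\mathrm{d}\mu\le \prod_j \mu(I_k)^{1/p_j}\,\|f_j\|_{L^\infty(I_k)}$, and the identity $\sum_j p_j^{-1}=1$ makes the scale factors collapse via $\prod_j\mu(I_k)^{1/p_j}\lambda_{n_k}^{(\alpha+1)/p_j}= \mu(I_k)\lambda_{n_k}^{\alpha+1}\lesssim 1$. Summing over $k$ and applying discrete Hölder then yields
\[
 \left|\int_{[0,1]}\prod_j f_j\,\mathrm{d}\mu\right| \lesssim \prod_j\left(\sum_k\left(\sum_m K(k,m)\|f_{j,m}\|_{L^{p_j}(\mathrm{d}\nu_\alpha)}\right)^{p_j}\right)^{1/p_j},
\]
and a final application of Schur's test to the kernel $K$ bounds each inner sum by $\sum_m \|f_{j,m}\|^{p_j}_{L^{p_j}(\mathrm{d}\nu_\alpha)}$, which is the claimed right-hand side.

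The main obstacle is the construction and control of the kernel $K(k,m)$: the quasi-lacunary condition must be invoked twice — once to produce the exponential decay $\exp(-q^{m-k-1})$ for high-frequency blocks $m>k$, once to compare $\lambda_{n_m}$ and $\lambda_{n_k}$ for low-frequency blocks $m<k$ — and the two decay rates must combine with the dimensional growth $q^{2N(\cdot)(\alpha+1)/p_j}$ to leave $K$ Schur-bounded on $\ell^{p_j}$ uniformly in $p_j\in(1,\infty)$. Once this verification is in place, the rest is a bookkeeping exercise in nested summation.
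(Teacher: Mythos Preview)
Your approach is correct and genuinely different from the paper's. The paper expands the product $\prod_j f_j$ globally: after applying \cref{thm:muntz-bound}, \cref{lemm:IPP} and \cref{prop:generalized-bernstein}, it arrives at a sum over $n$-tuples $(i_1,\dots,i_n)$ with an $n$-index kernel $\Phi_{n-1}^{i_1,\dots,i_n}=\frac{\lambda_{i_1}^{\beta/p_1}\cdots\lambda_{i_n}^{\beta/p_n}}{\lambda_{i_1}^\beta+\cdots+\lambda_{i_n}^\beta}$, interprets this as an $(n-1)$-linear operator on sequences, and proves its boundedness $\prod_{j<n}\ell^{p_j}\to\ell^{p_n'}$ via Schur's test and an induction on $n$ that exploits the recursion $\Phi_{n-1}\lesssim z_{n-1}\Phi_{n-2}$ together with interpolated variants. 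Your dyadic localization on the intervals $I_k$ sidesteps this entirely: once each factor $f_j$ is bounded in $L^\infty(I_k)$ by a sum against a \emph{two}-index kernel, multilinear H\"older on $I_k$ decouples the factors and the problem reduces to $n$ separate applications of Schur's test for convolution-type kernels on $\ell^{p_j}$---no induction on $n$ is required. What your route buys is modularity and simplicity; what the paper's route buys is a single global kernel whose structure makes the role of the exponent relation $\sum_j p_j^{-1}=1$ more transparent.

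Two small points to tighten. First, your kernel $K(k,m)$ depends on $j$ through the exponent $(\alpha+1)/p_j$ in the low-frequency tail $m\le k$; you should write $K_j$ and observe that each is Schur-bounded on $\ell^{p_j}$. Second, the uniformity ``in $p_j\in(1,\infty)$'' that you flag in the last paragraph is neither needed nor true: the row sum $\sum_{m\le k}q^{-(k-m)(\alpha+1)/p_j}$ is of order $p_j$, so the Schur constant for $K_j$ on $\ell^{p_j}$ grows like $p_j$. Since $p_1,\dots,p_n$ are fixed in the statement, this is harmless---just drop the word ``uniformly''. (A further triviality: the $I_k$ only cover $[1-\lambda_{n_0}^{-1},1)$; the missing compact piece carries a finite $\mu$-mass and is handled by a crude $L^\infty$ bound.)
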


\begin{remark} One can check that such an estimate is not an immediate consequence of the Hölder inequality. We will take advantage of lacunarity in the treatment of the non-diagonal interactions. One of the major difficulty is that standard interpolation techniques are not working directly because taking absolute value of powers of some Müntz polynomials does not preserve its generalized spectrum. This key estimate will only be applied to prove Carleson embeddings associated to the Lebesgue measure, in order to answer open problems that were originally set in \cite{CFT,NT}. Actually, more general types of Carleson embeddings could be drawn from this multilinear estimate, but it is not the purpose of this article.
\end{remark}

Let us finish this introduction with the following result, whose proof could be obtained by an analysis of the multilinear theorem. Therefore, this result is stated without proof. 

This result is a generalization of \cref{thm:non-sing} in the context of embedding in weighted Lebesgue spaces and reads as follows. 

\begin{Ntheorem} Let $p\in[1,\infty)$, $\alpha>0$ and $\beta\geq 1$. Assume that $\Lambda$ is a quasi-lacunary and subgeometric Müntz sequence, then $\mu \in \mathcal{M}_{x^{\alpha \beta}}$ if and only if the embedding holds true: for any $f\in M_{\Lambda}$ there holds
\[
	\|f\|_{L^{p}(\mathrm{d}\mu)}\lesssim \|f\|_{L^{\frac{p}{\beta}}(\mathrm{d}\nu_{\alpha-1})},
\] 
with an implicit constant independent of $f$. 
\end{Ntheorem}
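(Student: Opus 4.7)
The plan is to prove both directions, mirroring the argument for \cref{thm:non-sing} (the case $\alpha=1$) with the unweighted measure on the right-hand side replaced by the weight $\mathrm{d}\nu_{\alpha-1}$. The central tool is \cref{thm:multilinear}; the decoupling \cref{thm:Lp-frames} and the finite-dimensional norm equivalence \cref{lem:newm} on each block $F_k$ close the estimate.

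For necessity, I would test the embedding on the monomials $t^{\lambda_k}$. The Beta-function asymptotic
\[
    \int_{[0,1]}t^{p\lambda_k/\beta}(1-t)^{\alpha-1}\,\mathrm{d}t = B\!\left(\tfrac{p\lambda_k}{\beta}+1,\alpha\right) \approx \lambda_k^{-\alpha}
\]
gives $\|t^{\lambda_k}\|_{L^{p/\beta}(\mathrm{d}\nu_{\alpha-1})}^p \approx \lambda_k^{-\alpha\beta}$, so the embedding forces $\int t^{p\lambda_k}\,\mathrm{d}\mu \lesssim \lambda_k^{-\alpha\beta}$. Since $\Lambda$ is subgeometric, every scale $\varepsilon\in(0,1)$ is met by some $\lambda_k\approx 1/\varepsilon$; on $[1-\varepsilon,1]$ one has $t^{p\lambda_k}\gtrsim 1$, which yields $\mu([1-\varepsilon,1])\lesssim \varepsilon^{\alpha\beta}$, i.e.\ $\mu\in\mathcal{M}_{x^{\alpha\beta}}$.

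For sufficiency, set $\alpha'=\alpha\beta-1$, so that $\mu\in\mathcal{M}_{x^{\alpha'+1}}$, and decompose $f=\sum_k f_k$ along the blocks $F_k$. For an even integer $p=2m$, apply \cref{thm:multilinear} with $n=p$, exponents $p_j=p$, and factors alternating between $f$ and $\bar f$ so that $\prod_j f_j=|f|^p$; this produces
\[
    \int_{[0,1]}|f|^p\,\mathrm{d}\mu \lesssim \sum_k \|f_k\|_{L^p(\mathrm{d}\nu_{\alpha\beta-1})}^p.
\]
On each $F_k$, which has dimension at most $N$ and exponents confined to a band of bounded ratio, \cref{lem:newm} delivers the uniform-in-$k$ comparison
\[
    \|f_k\|_{L^p(\mathrm{d}\nu_{\alpha\beta-1})} \approx \|f_k\|_{L^{p/\beta}(\mathrm{d}\nu_{\alpha-1})},
\]
since a direct Beta-function calculation shows both sides to be of order $\lambda_{n_k}^{-\alpha\beta/p}$ on every monomial $t^\lambda\in F_k$. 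Combining the inclusion $\ell^{p/\beta}\hookrightarrow \ell^p$ (which exploits precisely $\beta\geq 1$) with the upper bound of \cref{thm:Lp-frames} for $\mathrm{d}\nu_{\alpha-1}$, one concludes
\[
    \sum_k \|f_k\|_{L^{p/\beta}(\mathrm{d}\nu_{\alpha-1})}^p \leq \Big(\sum_k \|f_k\|_{L^{p/\beta}(\mathrm{d}\nu_{\alpha-1})}^{p/\beta}\Big)^{\beta} \lesssim \|f\|_{L^{p/\beta}(\mathrm{d}\nu_{\alpha-1})}^p.
\]
This settles the case of even integer $p\geq 2$; general $p$ is reached by Riesz--Thorin interpolation of the identity map between two consecutive even integer values, which scales the target exponent as $p$ and the source exponent as $p/\beta$, hence preserving the relation we want.

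The expected main obstacle is twofold. First, the uniform-in-$k$ comparison $\|f_k\|_{L^p(\mathrm{d}\nu_{\alpha\beta-1})} \approx \|f_k\|_{L^{p/\beta}(\mathrm{d}\nu_{\alpha-1})}$ is immediate on monomials but non-trivial for general $f_k\in F_k$, requiring the dimension bound $\#E_k\leq N$ and the geometric control on the spectrum to yield constants independent of $k$. Second, the passage from even integers to arbitrary $p\geq\beta$ via interpolation must be handled carefully, as the identity map acts between two different weighted $L^r$ scales on each side.
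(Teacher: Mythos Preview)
The paper states this theorem without proof, only remarking that it ``could be obtained by an analysis of the multilinear theorem'', so there is no line-by-line comparison to make. Your overall architecture (test on monomials for necessity; multilinear estimate plus block-wise Bernstein plus the decoupling \cref{thm:Lp-frames} for sufficiency) is exactly the intended route and mirrors how the paper proves \cref{thm:non-sing}.

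There is, however, a genuine gap in your passage from even integers to general $p$. Riesz--Thorin does not apply here: the endpoint bounds $\|f\|_{L^{p_i}(\mathrm{d}\mu)}\lesssim \|f\|_{L^{p_i/\beta}(\mathrm{d}\nu_{\alpha-1})}$ hold \emph{only} for $f\in M_\Lambda$, not for arbitrary $L^{p_i/\beta}$ functions, and the analytic family in the three-lines argument sends $f$ to $|f|^{a(z)}\operatorname{sgn}(f)$, which destroys the M\"untz spectrum. The paper flags precisely this obstruction in the remark after \cref{thm:multilinear}: ``standard interpolation techniques are not working directly because taking absolute value of powers of some M\"untz polynomials does not preserve its generalized spectrum.'' A related issue is coverage: even-integer endpoints only reach $p\geqslant 2$, leaving $p\in[\beta,2)$ untreated when $\beta<2$.

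The fix is the one carried out in \cref{sec.nonsing}: first replace $|f|$ by the positive lacunary majorant $\sum_k \|f_k\|_\infty t^{(\lambda_{n_k}+1)/N}$ via \cref{thm:muntz-bound}, and then interpolate \emph{by hand} with H\"older inside the sum and the integral (as in \eqref{eq.after-holder}), applying the already-established integer cases to the resulting lacunary polynomials. Two minor points: the uniform block comparison $\|f_k\|_{L^p(\mathrm{d}\nu_{\alpha\beta-1})}\lesssim \|f_k\|_{L^{p/\beta}(\mathrm{d}\nu_{\alpha-1})}$ is exactly \cref{prop:generalized-bernstein} with $\delta=0$, rather than \cref{lem:newm} directly; and the inequality $\sum_k \|f_k\|^{p/\beta}_{L^{p/\beta}(\mathrm{d}\nu_{\alpha-1})}\lesssim \|f\|^{p/\beta}_{L^{p/\beta}(\mathrm{d}\nu_{\alpha-1})}$ you invoke is the \emph{lower} bound in \cref{thm:Lp-frames}, not the upper.
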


\subsection{Plan of the paper}

In \cref{sec.prelim} we recall a very useful bound \cref{thm:muntz-bound} which is a pointwise bound for functions $f_k \in F_k$, which is sharp enough for our purposes. We also state another very useful tool, namely some generalized Bernstein inequalities \cref{prop:generalized-bernstein}. Equiped with these estimates, as well as \cref{thm:Lp-frames}, we can proceed to the proof of \cref{thm:non-sing} and \cref{thm:sing} in \cref{sec.nonsing} and \cref{sec.sing} 

\cref{sec.decoupling} is devoted to the proof of \cref{thm:Lp-frames}, which can be viewed as a major part of our article. The non-singular case relies on the method of $T_{\rho}$ dilations, which builds on previous works, and the singular case essentially is concerned with mapping this case to the non-singular one via differentiation tricks and integration by parts. 

In \cref{sec.multlinear} we prove \cref{thm:multilinear}. The main idea is to reduce the statement to a continuity bound on a multilinear operator, which can be tackled using a very simple tool: Schur's test. It is also at this stage that we can exploit the quasi-lacunary assumption on $\Lambda$.

\subsection*{Acknowledgments} V.M. thanks Loïc Gaillard and Pascal Lefèvre for fruitful conversations during early stages of the project. 

\section{Preliminaries}\label{sec.prelim} 

\subsection{Notation} 

We use the notation $A \lesssim  B$ when there is a constant $C >0$ such that $A \leqslant CB$ and that $C$ depends ony on parameter that we do not track the dependence on. When $A \lesssim B \lesssim A$ we write $A \sim B$. 

In some summations in \cref{sec.multlinear} we also write $i \ll j$ (resp. $i\sim j$). Their meaning is the following: it means that $i \leqslant j - R$ (resp. $-R<i-j<R$) for some integer $R>0$.

\subsection{Useful properties of M\"{u}ntz polynomials}

A simple idea to estimate $f=\sum_{k\geq 0}f_k$ is to start with estimating the terms $f_k \in F_k$. In this direction, a very useful inequality, which we shall use on many occasions is the following: 
 
\begin{theorem}[\cite{GL}, Corollary 8.1.2 ]\label{thm:muntz-bound} Let $\Lambda$ be a quasi-lacunary sequence. 
Let $k \geqslant 0$. Then for any $f \in F_{k}$ and for all $x \in [0,1]$, there holds 
\[
	\vert f(x) \vert \lesssim x^{\frac{\lambda_{n_{k}}+1}{N}}\|f\|_{L^{\infty}},
\]
where the implicit constant depends only on $q, N$, but not on $k$, nor $f$. 
\end{theorem}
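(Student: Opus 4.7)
The plan is to combine a scaling change of variable with a compactness argument on a finite-dimensional family. By homogeneity we may assume $\|f\|_{L^\infty}=1$ and write $f(x)=\sum_{\mu\in E_k}a_\mu x^\mu$. Denote by $\mu^*$ the smallest exponent appearing in $E_k$; by the quasi-lacunary hypothesis, $|E_k|\leq N$ and every $\mu\in E_k$ satisfies $\mu/\mu^*\in[1,q^{2N}]$.

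First I would perform the substitution $\tau=x^{\mu^*}$ and set
\[
g(\tau):=f(\tau^{1/\mu^*})=\sum_{\mu\in E_k}a_\mu\,\tau^{\mu/\mu^*}.
\]
Since $x\mapsto x^{\mu^*}$ is a bijection of $[0,1]$, one has $\|g\|_{L^\infty[0,1]}=\|f\|_{L^\infty[0,1]}=1$. The original claim reduces to the parameter-free estimate $|g(\tau)|\leq C(q,N)\,\tau^{1/N}$ for $\tau\in[0,1]$, which must hold uniformly over all Müntz polynomials $g$ with at most $N$ exponents in the compact interval $I:=[1,q^{2N}]$ and $\|g\|_\infty\leq 1$.

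The rescaled statement I would then prove by compactness. For any fixed tuple $\alpha=(\alpha_0,\ldots,\alpha_{m-1})\in I^m$ with $m\leq N$, the space $V_\alpha:=\operatorname{Span}\{\tau^{\alpha_0},\ldots,\tau^{\alpha_{m-1}}\}$ is finite-dimensional, and every $g\in V_\alpha$ vanishes to order at least $1>1/N$ at the origin. Hence $\|g\|_\star:=\sup_{\tau\in(0,1]}|g(\tau)|\tau^{-1/N}$ is a well-defined norm on $V_\alpha$, equivalent (by finite-dimensionality) to $\|\cdot\|_\infty$ via some constant $C(\alpha)$. The parameter space $\bigcup_{m\leq N}I^m$ is compact, and $\alpha\mapsto C(\alpha)$ is upper-semicontinuous in $\alpha$, so a uniform bound $C=C(q,N)$ exists. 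Reverting the substitution then yields $|f(x)|\leq C(q,N)\,x^{\mu^*/N}\|f\|_{L^\infty}$.

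The main obstacle is to verify that $C(\alpha)$ stays bounded as two or more exponents $\alpha_j$ collide, since the spanning set then becomes degenerate and the naive Vandermonde-type representation of the coefficients $a_\mu$ blows up. I would handle this by a confluent limit: bounded sequences in $V_\alpha$ have (after rescaling the coefficients) subsequential limits of the form $\tau^{\alpha}P(\log\tau)$ with $P$ a polynomial of degree strictly less than $N$. These limits still belong to a family of dimension $\leq N$ on which $\|\cdot\|_\star$ is finite, because each factor $(\log\tau)^j$ is dominated, near $\tau=0$, by any positive power of $\tau^{-1}$; the prefactor $\tau^{\alpha}$ with $\alpha\geq 1$ therefore more than compensates the loss $\tau^{-1/N}$ in the definition of $\|\cdot\|_\star$. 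Once the uniform bound is proven on this enlarged (and now closed) family, standard semicontinuity arguments transfer it to every $V_\alpha$, and one can alternatively appeal to the Borwein–Erdélyi theory of Müntz–Chebyshev extremal polynomials and their continuous dependence on exponents to reach the same conclusion.
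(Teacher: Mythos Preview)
The paper does not supply a proof of this statement; it is quoted verbatim from \cite{GL}, Corollary~8.1.2, and used as a black box throughout. So there is no in-paper argument to compare yours against.

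Your strategy---rescale the exponents into the fixed window $[1,q^{2N}]$ via $\tau=x^{\mu^*}$, then invoke finite-dimensional norm equivalence together with compactness of the exponent tuples---is the standard route to such pointwise Müntz bounds, and your identification of the obstacle (collision of exponents) and of the correct enlarged family $\tau^{\alpha}P(\log\tau)$ with $\alpha\geq 1$ and $\deg P<N$ is exactly right. The key fact that $\tau^{1-1/N}|\log\tau|^{N-1}\to 0$ at the origin is precisely why the exponent $1/N$ (and not $1$) appears in the statement.

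The one genuine gap is the passage from ``each confluent limit has finite $\|\cdot\|_\star$'' to ``the constants $C(\alpha)$ are uniformly bounded''. Asserting upper-semicontinuity of $\alpha\mapsto C(\alpha)$ is circular: that is the statement to be proven. And uniform convergence $g_n\to g^*$ on $[0,1]$ does \emph{not} by itself control $\|g_n\|_\star$, because the weight $\tau^{-1/N}$ is unbounded near $0$; knowing that the limit looks like $\tau(\log\tau)^j$ does not automatically bound $g_n(\tau)\tau^{-1/N}$ along the sequence. To close this you must show that in fact $g_n(\tau)\tau^{-1/N}\to g^*(\tau)\tau^{-1/N}$ uniformly on $[0,1]$, which requires a quantitative estimate on how fast $g_n$ vanishes at $0$. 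One clean way is to expand $g_n$ in a divided-difference basis adapted to the nearly-colliding exponents, so that the basis functions are uniformly $O(\tau|\log\tau|^{N-1})$ and the coefficients stay bounded by $\|g_n\|_\infty$; this makes the $\|\cdot\|_\star$-convergence explicit. The alternative you mention---appealing to the Borwein--Erd\'elyi comparison/extremal theory---is legitimate, but then that result is doing all the work and should be cited precisely rather than invoked as folklore.
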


Note that this allows us to essentially bound any function $f \in M_{\Lambda}$ by a lacunary polynomial with coefficients carrying the $L^{\infty}$ norms of the $f_k$. A useful consequence is the following

\begin{corollary}\label{coro:loc-max}
There exists $A>0$ and $\eta >0$ such that for any $k \geqslant 0$, and any $f \in F_{k}$ satisfying $A\frac{\|f\|_{\infty}}{\|f'\|_{\infty}}\leqslant \eta$ then if $x_{0}$ is such that $\vert f(x_{0}) \vert = \|f\|_{\infty}$, then 
\[ 
	1-A\frac{\|f\|_{\infty}}{\|f'\|_{\infty}}\leqslant x_{0} \leqslant 1.
\] 
\end{corollary}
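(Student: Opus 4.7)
The plan is to combine the sharp pointwise bound from \cref{thm:muntz-bound} with a Bernstein-type inequality from \cref{prop:generalized-bernstein} to force any maximizer of $|f|$ to lie close to $1$. Assume $f \in F_k$ is nonzero and let $x_0 \in [0,1]$ satisfy $|f(x_0)| = \|f\|_\infty$. Applying \cref{thm:muntz-bound} at the point $x_0$ itself gives
\[
\|f\|_\infty = |f(x_0)| \leq C_0\, x_0^{(\lambda_{n_k}+1)/N}\, \|f\|_\infty,
\]
where $C_0>0$ depends only on $q,N$. Cancelling $\|f\|_\infty$ and taking logarithms yields $x_0^{(\lambda_{n_k}+1)/N} \geq C_0^{-1}$, and then the elementary inequality $e^{-t}\geq 1-t$ for $t\geq 0$ produces the estimate
\[
x_0 \geq 1 - \frac{N\log C_0}{\lambda_{n_k}+1}.
\]

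Next, I would invoke \cref{prop:generalized-bernstein} in the form $\|f'\|_\infty \leq C_1\, \lambda_{n_k}\, \|f\|_\infty$ for all $f\in F_k$, with $C_1$ depending only on $q,N$ (such a Bernstein-type bound is standard in the Müntz setting and, together with the quasi-lacunarity hypothesis $\dim F_k \leq N$ and $\lambda_{n_{k+1}} \sim \lambda_{n_k}+1$, gives constants depending only on $q,N$). This yields $1/(\lambda_{n_k}+1) \leq C_1 \|f\|_\infty/\|f'\|_\infty$, which combined with the previous display gives, with $A := N C_1 \log C_0$,
\[
1-x_0 \leq A\,\frac{\|f\|_\infty}{\|f'\|_\infty}.
\]

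It remains to make the inequality meaningful by ensuring $1 - A\|f\|_\infty/\|f'\|_\infty \geq 0$, which is precisely the role of the hypothesis $A\|f\|_\infty/\|f'\|_\infty \leq \eta$; taking $\eta = 1$ (or any value in $(0,1]$) suffices, and the upper bound $x_0 \leq 1$ is automatic from $x_0 \in [0,1]$. Note that we do not need to worry about the case $x_0 = 0$: since every exponent of $F_k$ is strictly positive, $f(0)=0$, so $x_0 = 0$ forces $f \equiv 0$, a case for which the statement is vacuous.

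The main obstacle is the appeal to \cref{prop:generalized-bernstein}: one must verify that this generalized Bernstein inequality provides an upper bound on $\|f'\|_\infty$ of the right order $\lambda_{n_k} \|f\|_\infty$ with a constant depending only on $q$ and $N$. This is where the quasi-lacunary structure enters, via the fact that $F_k$ is finite-dimensional of dimension at most $N$, so that norm equivalences on $F_k$ with controlled constants translate the trivial differentiation bound for each monomial $t^\mu$ (with $\mu \leq \lambda_{n_{k+1}} \lesssim \lambda_{n_k}+1$) into the required uniform estimate.
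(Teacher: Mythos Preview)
Your argument is essentially the paper's: apply \cref{thm:muntz-bound} at the maximizer $x_0$ to get $x_0 \geq 1 - c/\lambda_{n_k}$, and then convert $1/\lambda_{n_k}$ into $\|f\|_\infty/\|f'\|_\infty$ via a Bernstein-type inequality. The paper runs this by contradiction, you do it directly; the content is identical.

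There is one genuine problem: you repeatedly invoke \cref{prop:generalized-bernstein} for the bound $\|f'\|_\infty \leq C_1 \lambda_{n_k}\|f\|_\infty$, but that proposition does not provide such an estimate---it compares weighted $L^p$ norms of the \emph{same} function, not $f'$ to $f$. The result you actually need is Newman's inequality, \cref{lem:bernstein-newman}, applied to the finite block $E_k$: since $\#E_k\leq N$ and $\lambda_{n_{k+1}}\leq q^{2N}\lambda_{n_k+1}$, one gets $\|f'\|_\infty \lesssim \lambda_{n_k}\|f\|_\infty$ with a constant depending only on $q,N$, exactly as in the paper's proof. Worse, citing \cref{prop:generalized-bernstein} here would be circular: its proof goes through \cref{lem:newm}, and the non-flat case of \cref{lem:newm} explicitly uses \cref{coro:loc-max}. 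Once you replace the reference by \cref{lem:bernstein-newman}, your proof is complete and matches the paper's.
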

	
\begin{proof}
Let $f \in F_k$ and argue by contradiction, assuming that $x_{0}<1-A\frac{\|f\|_{\infty}}{\|f'\|_{\infty}}$. 
We start by an application of \cref{thm:muntz-bound}, which yields  
\[
	\vert f(x) \vert \lesssim x^{\frac{\lambda_{n_{k}}+1}{N}}\|f\|_{L^{\infty}} \leqslant Cx^{\frac{\lambda_{n_{k}}+1}{N}}|f(x_0)|
\]
for all $x \in [0,1]$. Evaluating in $x_0$ and using the bound on $x_0$ we infer 
\[
	1 \leqslant Cx_0^{\frac{\lambda_{n_{k}}+1}{N}} \leqslant C\left(1-A\frac{\|f\|_{\infty}}{\|f'\|_{\infty}}\right)^{\frac{\lambda_{n_{k}}+1}{N}}.
\]
Since $f \in F_k$, by an application of \cref{lem:bernstein-newman} we can bound 
\[
	\|f'\|_{L^{\infty}} \lesssim \left(\sum_{i=1}^{n_{k+1}-n_k} \lambda_{n_k+i}\right)\|f\|_{L^{\infty}} \leqslant C(q,N)\lambda_{n_{k}+1} \|f\|_{L^{\infty}}, 
\]
therefore 
\[
	1 \leqslant \left(1-\frac{CA}{\lambda_{n_{k+1}}}\right)^{\frac{\lambda_{n_{k}}+1}{N}} \leqslant C\exp(-CA), 
\]
which for $A$ large enough is a contradiction. 
\end{proof} 

We recall the following, which will be generalized by \cref{thm:Lp-frames}.

\begin{theorem}[Gurariy-Macaev inequalities \cite{GM}]\label{thm.gm} Let $p\in [1,\infty)$ and $\Lambda$ be a quasi-lacunary sequence. Then there exists constants $C_1, C_2 >0$ such that there holds for all $f_k \in F_k$, 
\[
	\sum_{k\geq 0} \|f_k\|_{L^p}^p \lesssim \big\|\sum_{k\geq 0}f_k\big\|_{L^p}^p \lesssim \sum_{k\geq 0} \|f_k\|_{L^p}^p, 	
\]
where the implicit constant only depends on $p, q$ and $N$.
\end{theorem}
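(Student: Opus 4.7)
The statement is the $\alpha=0$ instance of \cref{thm:Lp-frames}, so one clean route is simply to invoke that theorem once it is established in \cref{sec.decoupling}; the alternative is to appeal to the classical argument of \cite{GM}. For the purpose of this sketch I describe a direct strategy, which splits naturally into the two inequalities.

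\emph{Upper bound.} The driver is the pointwise bound of \cref{thm:muntz-bound}: setting $a_k:=\|f_k\|_{L^\infty}$ and $\mu_k:=(\lambda_{n_k}+1)/N$, one has $|f_k(t)|\lesssim a_k\,t^{\mu_k}$, with $(\mu_k)$ genuinely lacunary by the quasi-lacunarity of $\Lambda$. The task reduces to the scalar lacunary-majorant estimate
\[
\int_0^1 \Big(\sum_{k\geqslant 0} a_k t^{\mu_k}\Big)^p dt \;\lesssim\; \sum_{k\geqslant 0} \frac{a_k^p}{\mu_k},
\]
which I would handle via the substitution $t=e^{-s}$: on each dyadic interval $s\in[1/\mu_{k+1},1/\mu_k]$ the inner exponential sum is comparable to $\sum_{j\leqslant k} a_j$, and a discrete Hardy inequality (available because $(\mu_k)$ is lacunary) closes the estimate. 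A Nikolskii-type equivalence $\|f_k\|_{L^p}\sim \mu_k^{-1/p}a_k$—whose upper half follows by integrating the bound of \cref{thm:muntz-bound}, and whose lower half combines \cref{coro:loc-max} with the Bernstein inequality to confine most of $|f_k|^p$ to an interval of length $\sim 1/\mu_k$ near $1$—then converts this majorant estimate into the desired $\sum_k \|f_k\|_{L^p}^p$.

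\emph{Lower bound and main obstacle.} This direction cannot come from convexity alone, since nothing rules out a priori cancellation in $\sum_k f_k$. The natural route is duality,
\[
\Big\|\sum_k f_k\Big\|_{L^p} = \sup_{\|g\|_{L^{p'}}\leqslant 1}\int_0^1\Big(\sum_k f_k\Big)g\,dt,
\]
with a test function $g=\sum_k g_k$, $g_k\in F_k$, calibrated so that the diagonal pairings $\int f_k g_k$ jointly reproduce $\big(\sum_k\|f_k\|_{L^p}^p\big)^{1/p}$, while the upper bound already proved controls $\|g\|_{L^{p'}}$ by $\big(\sum_k\|g_k\|_{L^{p'}}^{p'}\big)^{1/p'}$. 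The critical step, and the genuine obstacle, is to show that the off-diagonal contributions $\int f_j g_k$ with $j\neq k$ sum to a negligible quantity: each such integral decays like $1/(\mu_j+\mu_k)$ because $f_j g_k$ lives in a Müntz space with exponents shifted by roughly $\mu_j+\mu_k$, and only the multiplicative spacing of $(\mu_k)$ renders the resulting double sum geometrically summable against the diagonal. This off-diagonal control is the heart of the Gurariy-Macaev phenomenon and has no analogue in a non-lacunary setting, which is precisely why the lower bound is the delicate half.
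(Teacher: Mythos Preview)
The paper does not give its own proof of \cref{thm.gm}: it is quoted from \cite{GM} as a classical result, and the remark immediately after \cref{thm:Lp-frames} notes that the case $\alpha=0$ recovers it. Your first route---invoke \cref{thm:Lp-frames} once \cref{sec.decoupling} is in place---is therefore exactly in line with the paper's logical structure (and is non-circular, since the proofs in \cref{sec.decoupling,sec.multlinear} do not call on \cref{thm.gm}).

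Your direct sketch, by contrast, takes a genuinely different path from the paper's proof of the generalization \cref{thm:Lp-frames}. For the upper bound the paper does not use a Hardy-type inequality on the lacunary majorant; instead it passes through the multilinear estimate \cref{thm:multilinear}, reducing to a Schur test on the kernel $\Phi_{ij}=\Phi((\lambda_i/\lambda_j)^{\beta/p})$. For the lower bound the paper avoids duality entirely and uses the $T_\rho$-dilation machinery of \cite{GL} (\cref{prop:GL633,prop:sum-control}): one introduces auxiliary norms $\|\cdot\|_{\mathcal{L}_k}$ on which the dilation operators are uniformly bounded, and the abstract proposition then yields $\|f_k\|_{\mathcal{L}_k}\lesssim\|f\|_{\mathcal{L}_k}$ with a constant tracked explicitly through $q,N$. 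Your duality scheme is closer in spirit to the original \cite{GM}, but note the step you leave implicit: the natural dual witness $|f_k|^{p-2}\bar f_k$ is not in $F_k$, so producing $g_k\in F_k$ with $\int f_k g_k\sim\|f_k\|_{L^p}\|g_k\|_{L^{p'}}$ while keeping the off-diagonal sums tractable requires an additional finite-dimensional argument inside each block. The dilation method sidesteps this entirely, at the cost of importing the abstract machinery of \cite{GL}.
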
	

\subsection{Bernstein inequalities}

We recall the following classical Newman's inequality in the context of Müntz polynomials: 

\begin{proposition}[Newman's inequality \cite{GL}, Proposition 8.2.2]\label{lem:bernstein-newman}
For any $f \in M_{\Lambda}$ there holds 
\[
	\|f'\|_{\infty}  \lesssim \left(\sum_{k \geq 0}\lambda_{k}\right)\|f\|_{L^{\infty}},
\]
whenever $f = \sum_{k\geq 0} \alpha_k t^{\lambda_k}$ and $\sum_{k\geq 0} \frac{1}{\lambda_k} < \infty$, and where the implicit constant is numerical. 
\end{proposition}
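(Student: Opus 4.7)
The plan is to prove the sharper pointwise form $|t f'(t)| \leq C(\sum_{k} \lambda_k) \|f\|_{L^{\infty}[0,1]}$, which at $t = 1$ recovers the stated inequality and, via the dilation $f \mapsto f(a\cdot)$ (which preserves the Müntz spectrum and does not increase $\|f\|_{L^\infty[0,1]}$), upgrades to a uniform bound on $[0,1]$; the summability hypothesis $\sum 1/\lambda_k < \infty$ together with $\lambda_k > 0$ handles integrability of $f'$ near $0$. After the substitution $t = e^{-x}$ the problem becomes: for $g(x) := f(e^{-x}) = \sum_k a_k e^{-\lambda_k x}$ on $[0,\infty)$ one has $g'(0) = -f'(1)$ and $\|g\|_{L^\infty([0,\infty))} = \|f\|_{L^\infty[0,1]}$, so it suffices to show
\[
 |g'(0)| \;\leq\; C\Bigl(\textstyle\sum_k \lambda_k\Bigr)\, \|g\|_{L^\infty([0,\infty))}
\]
uniformly over $g$ in the finite-dimensional space $V_\Lambda := \operatorname{span}\{e^{-\lambda_k x}\}$.

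The core idea is to realise the evaluation $g \mapsto g'(0)$ on $V_\Lambda$ as integration against an explicit kernel,
\[
 g'(0) \;=\; \int_0^\infty K_\Lambda(x)\, g(x)\,\mathrm{d}x, \qquad g \in V_\Lambda,
\]
and to control $\|K_\Lambda\|_{L^1([0,\infty))} \lesssim \sum_k \lambda_k$. To construct $K_\Lambda$ I would use the finite Blaschke product of the right half-plane,
\[
 B_\Lambda(z) \;=\; \prod_{k=0}^{n} \frac{z - \lambda_k}{z + \lambda_k},
\]
and the associated Malmquist--Takenaka orthonormal basis of $\operatorname{span}\{(z+\lambda_k)^{-1}\} \subset H^2$ of the right half-plane. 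The inverse Laplace transform of this basis gives an orthonormal basis of $V_\Lambda$ in $L^2([0,\infty))$, and Cauchy's formula applied to the integrand $z\, B_\Lambda(z)^{-1}$ on a contour enclosing $\{\lambda_k\}$ represents $g'(0)$ as an explicit contour integral of $g$ against a function built from $B_\Lambda$.

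The main obstacle is the $L^1$ bound $\|K_\Lambda\|_{L^1} \lesssim \sum_k \lambda_k$ with a constant that is genuinely absolute (independent of $n$ and of the positions of the $\lambda_k$). The strategy here is to shift the Cauchy contour onto the imaginary axis, where the identity $|B_\Lambda(iy)| \equiv 1$ eliminates any $n$-dependence from the remaining contour integral, leaving as the dominant contribution the sum of residues at the simple poles $\{\lambda_k\}$. A direct computation at each pole produces a residue of size $\lesssim \lambda_k$, and summing yields precisely the target $\sum_k \lambda_k$. This uniform modulus property of $B_\Lambda$ on the imaginary axis is exactly where the Blaschke product structure becomes indispensable, and it is the delicate point of Newman's argument: without it, one would recover only weaker bounds that deteriorate with the number or clustering of the exponents.
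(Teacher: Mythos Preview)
The paper does not prove this proposition; it is quoted from \cite{GL} and used as a black box throughout. Your sketch is Newman's original 1976 argument: pass to exponential sums via $t=e^{-x}$, represent the functional $g\mapsto g'(0)$ on $\operatorname{span}\{e^{-\lambda_k x}\}$ by integration against a kernel built from the right-half-plane Blaschke product $B_\Lambda(z)=\prod_k\frac{z-\lambda_k}{z+\lambda_k}$, and extract the uniform $L^1$ bound on the kernel from the unimodularity $|B_\Lambda(iy)|\equiv 1$ on the imaginary axis. This is the standard and correct route; the Malmquist--Takenaka detour you mention is compatible with it but not needed---Newman works directly with $B_\Lambda$ and a contour shift.

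One caveat on the statement rather than the method. Your dilation step yields only $\|tf'(t)\|_{L^\infty[0,1]}\lesssim\bigl(\sum_k\lambda_k\bigr)\|f\|_{L^\infty}$, not the unweighted $\|f'\|_{L^\infty}$ written here: applying the bound at $t=1$ to $f(a\,\cdot)$ gives $|af'(a)|\leq C\bigl(\sum_k\lambda_k\bigr)\|f\|_\infty$, and the factor $a$ does not go away. If $0<\lambda_0<1$ then $f'$ need not even be bounded near $0$, so the inequality as stated cannot hold in general. Your remark that $\sum 1/\lambda_k<\infty$ ``handles integrability of $f'$ near $0$'' is true but beside the point---integrability is weaker than boundedness. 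The passage from $\|tf'(t)\|_\infty$ to $\|f'\|_\infty$ genuinely requires an additional hypothesis such as $\lambda_0\geq 1$, together with a short extra argument. In every application the paper makes (\cref{coro:loc-max}, \cref{lem:newm}, \cref{lem:derivative-est}) the inequality is invoked only for $f\in F_k$ with $k$ large, where all exponents exceed $1$ and the issue evaporates; so this is a looseness in the quoted statement rather than a flaw in your plan.
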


This allows for trading derivatives. In order to move from an $L^q$ estimate to an $L^p$ estimate, the classical tool is the Bernstein inequality. In our context, that is working in weighted spaces, we state the main result of this section: 

\begin{proposition}[Generalized Berstein estimates]\label{prop:generalized-bernstein} 
Let $\beta >0$, $\mu \in \mathcal{M}_{x^{\beta}}$ and $\alpha >-1$. There exists $k_0 > 0$ such that for all $k\geqslant k_0$, for all $f_{k} \in F_{k}$, and for any $p,q \geqslant 1$ there holds 
\[
	\|f_{k}\|_{L^{p}(d\mu)}\lesssim \lambda_{n_{k}}^{\delta}\|f_{k}\|_{L^{q}((1-x)^{\alpha}dx)},	
\] 
with $\delta=\frac{1+\alpha}{q}-\frac{\beta}{p}$, and where the implicit constant only depends on $p$,$q$, $\alpha$, $\beta$ and also on $q, N$.
\end{proposition}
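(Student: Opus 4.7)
The plan is to factor the desired inequality through the $L^\infty$ norm of $f_k$, proving the two one-step bounds
\[
\|f_k\|_{L^p(\mathrm{d}\mu)} \lesssim \lambda_{n_k}^{-\beta/p}\|f_k\|_{L^\infty} \quad \text{and} \quad \|f_k\|_{L^\infty} \lesssim \lambda_{n_k}^{(1+\alpha)/q}\|f_k\|_{L^q(\mathrm{d}\nu_\alpha)}
\]
separately and then multiplying them: the exponents add to $-\beta/p + (1+\alpha)/q = \delta$, as required.

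For the first bound, I would invoke \cref{thm:muntz-bound} to get $|f_k(x)|\lesssim x^{\lambda_{n_k+1}/N}\|f_k\|_{L^\infty}$, so that
\[
\|f_k\|_{L^p(\mathrm{d}\mu)}^p \lesssim \|f_k\|_{L^\infty}^p\int_0^1 x^{p\lambda_{n_k+1}/N}\,\mathrm{d}\mu(x).
\]
Substituting $s = 1-x$, using the elementary inequality $(1-s)^\lambda \leq e^{-\lambda s}$, and combining the layer-cake formula with the hypothesis $\mu([1-\varepsilon,1])\lesssim \varepsilon^\beta$, one obtains $\int_0^1 x^\lambda\,\mathrm{d}\mu(x) \lesssim \lambda^{-\beta}$ for every sufficiently large $\lambda$; taking $p$-th roots yields the first bound.

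For the second bound, I would first note, by applying \cref{thm:muntz-bound} at a maximizer $x_0$ of $|f_k|$ on $[0,1]$, that $\|f_k\|_{L^\infty}\lesssim x_0^{\lambda_{n_k+1}/N}\|f_k\|_{L^\infty}$; this forces $x_0\geq 1 - c/\lambda_{n_k}$ for some constant $c$ as soon as $k$ is large enough. Next, by Newman's inequality \cref{lem:bernstein-newman} applied in $F_k$, we have $\|f_k'\|_{L^\infty}\lesssim \lambda_{n_k}\|f_k\|_{L^\infty}$, so $|f_k(y)|\geq \tfrac12\|f_k\|_{L^\infty}$ on some interval of length $\sim 1/\lambda_{n_k}$ to the left of $x_0$. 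Inside this window I would select a sub-interval $I = [x_0 - \kappa/\lambda_{n_k}, x_0 - \kappa/(2\lambda_{n_k})]$ strictly bounded away from $y = 1$; on $I$ one has $(1-y)\sim 1/\lambda_{n_k}$, hence $\int_I(1-y)^\alpha\,\mathrm{d}y \sim \lambda_{n_k}^{-(1+\alpha)}$, and therefore $\|f_k\|_{L^q(\mathrm{d}\nu_\alpha)}^q \gtrsim \|f_k\|_{L^\infty}^q\lambda_{n_k}^{-(1+\alpha)}$, as desired.

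The main obstacle is the positioning of $I$ in the second step: it must be kept strictly away from $y=1$ (so that the weight $(1-y)^\alpha$ stays of the correct order $\lambda_{n_k}^{-\alpha}$, in particular avoiding the singularity when $\alpha<0$) while remaining inside the concentration window where $|f_k|\geq \tfrac12\|f_k\|_{L^\infty}$. This is exactly the reason why the threshold $k \geq k_0$ appears in the statement, since for small $k$ the window $[x_0 - c/\lambda_{n_k},x_0]$ may not sit comfortably inside $[0,1]$.
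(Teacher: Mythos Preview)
Your proposal is correct and follows the same overall scheme as the paper: factor through $\|f_k\|_{L^\infty}$, bounding $\|f_k\|_{L^p(\mathrm{d}\mu)}$ from above and $\|f_k\|_{L^q(\mathrm{d}\nu_\alpha)}$ from below by suitable powers of $\lambda_{n_k}$ times $\|f_k\|_{L^\infty}$. The only differences are in implementation. For the first step the paper passes through \cref{lemm:IPP} and the Beta function asymptotics rather than your layer-cake computation, which is an equivalent route. For the second step the paper isolates the lower bound as a separate lemma (\cref{lem:newm}), splitting into a ``flat'' case ($\|f_k'\|_{L^\infty}$ small) and a ``non-flat'' case; your argument is more economical because you feed in Newman's inequality from the start, forcing the concentration window to have length $\sim \lambda_{n_k}^{-1}$ and thereby collapsing the two cases into one. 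Your handling of the placement of $I$ (keeping it at distance $\sim \lambda_{n_k}^{-1}$ from $1$ so that $(1-y)^\alpha\sim \lambda_{n_k}^{-\alpha}$ uniformly, which is exactly what is needed when $\alpha<0$) is the correct way to finesse the weight and is the substantive point behind the threshold $k\geq k_0$.
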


The core of the proof is to prove the following estimate. 

\begin{lemma}\label{lem:newm}
For $k \gg 1$, we have for any $f \in F_{k}$, 
\[
	\|f\|_{L^{p}((1-x)^{\alpha}dx)} \gtrsim \min\left\{\frac{\|f\|_{\infty}^{1+\frac{(1+\alpha)}{p}}}{\|f'\|_{\infty}^\frac{1+\alpha}{p}},\|f\|_{\infty}\right\},	
\]
where the implied only depends on $p, \alpha$ and also on $q, N$.
\end{lemma}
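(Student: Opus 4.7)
The plan is to prove the lower bound by exhibiting a subinterval of $[0,1]$ on which $|f|$ remains comparable to $\|f\|_\infty$ and on which the weight $(1-x)^\alpha$ contributes a tractable amount. Let $x_0 \in [0,1]$ satisfy $|f(x_0)| = \|f\|_\infty$ and set $\delta := \|f\|_\infty / \|f'\|_\infty$. By the mean value theorem, for every $x$ with $|x - x_0| \leq \delta/2$ we have $|f(x)| \geq \|f\|_\infty / 2$, so the analysis reduces to locating $x_0$ and estimating the weight integral over a suitable subinterval of $[x_0 - \delta/2, x_0 + \delta/2] \cap [0,1]$.

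I split into two regimes depending on the size of $\delta$. Suppose first that $\delta \leq \eta/A$, with $A,\eta$ from \cref{coro:loc-max}; that corollary then forces $x_0 \in [1 - A\delta, 1]$. I choose $I = [x_0 - \delta/2, x_0]$, which lies inside $[0,1]$ for $k$ large enough (the constraint is automatic once $\delta$ is small). Then
\[
\int_I (1-x)^\alpha \,\mathrm{d}x = \frac{(1-x_0+\delta/2)^{\alpha+1}-(1-x_0)^{\alpha+1}}{\alpha+1}.
\]
Writing $u := 1 - x_0 \in [0, A\delta]$, the map $u \mapsto (u+\delta/2)^{\alpha+1} - u^{\alpha+1}$ is monotone on $[0,A\delta]$; evaluating at the appropriate endpoint (depending on the sign of $\alpha$) yields $\int_I(1-x)^\alpha \mathrm{d}x \gtrsim \delta^{\alpha+1}$ with an implicit constant depending only on $\alpha$ and $A$. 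Integrating $|f|^p \geq 2^{-p}\|f\|_\infty^p$ over $I$ then gives $\|f\|^p_{L^p((1-x)^\alpha \mathrm{d}x)} \gtrsim \|f\|_\infty^p\, \delta^{\alpha+1}$, matching the first term of the minimum after taking $p$-th roots.

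In the complementary regime $\delta > \eta/A$, set $c := \eta/(2A)$; then $c \leq \delta/2$, and $I = [x_0 - c, x_0+c] \cap [0,1]$ has length at least $c$ and satisfies $|f| \geq \|f\|_\infty/2$ throughout. A direct computation, separating $\alpha \geq 0$ (where $(1-x)^\alpha$ is smallest near $x = 1$, and one checks by cases whether $I$ does or does not abut $x=1$) from $-1 < \alpha < 0$ (where $(1-x)^\alpha \geq 1$ on $[0,1]$ so the bound is immediate), gives $\int_I (1-x)^\alpha \mathrm{d}x \gtrsim 1$ uniformly in the position of $I$. Hence $\|f\|_{L^p((1-x)^\alpha \mathrm{d}x)} \gtrsim \|f\|_\infty$, and since $\delta$ is now bounded below by a positive constant, this is $\gtrsim \min(\|f\|_\infty,\|f\|_\infty \delta^{(\alpha+1)/p})$.

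The main technical obstacle is the uniform lower bound $\int_I (1-x)^\alpha \mathrm{d}x \gtrsim \delta^{\alpha+1}$ in the first regime: the two signs of $\alpha$ must be treated separately because the weight behaves oppositely near $x=1$, the worst location for $\alpha \geq 0$ being $x_0 = 1$ (interval pushed into the tip where the weight vanishes) and for $\alpha < 0$ being $x_0 = 1 - A\delta$ (the singularity is then furthest away). The rest of the proof packages the mean value theorem with the localization provided by \cref{coro:loc-max}, and the hypothesis $k \gg 1$ serves only to guarantee that the corollary applies cleanly and that the interval $I$ of the first case fits inside $[0,1]$.
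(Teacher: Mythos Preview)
Your proof is correct and follows essentially the same approach as the paper: both localize the maximum point $x_0$, use the mean value theorem to bound $|f|$ below on a subinterval of length comparable to $\|f\|_\infty/\|f'\|_\infty$, and split into two regimes according to the size of this ratio. The only notable execution differences are that in the small-$\delta$ regime the paper performs a change of variable $y=\frac{\|f'\|_\infty}{M}(1-x)$ to reduce the weight integral to a compactly parametrized family, whereas you compute $\int_I(1-x)^\alpha\,\mathrm{d}x$ directly via monotonicity in $u=1-x_0$; and in the large-$\delta$ regime the paper still uses the localization $x_0\in[1-\varepsilon,1]$ and a compactness argument, while you observe that no localization of $x_0$ is needed since any fixed-length subinterval of $[0,1]$ carries weight bounded below by a constant depending only on $\alpha$ and the length.
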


\begin{proof}[Proof of \cref{lem:newm}] We consider two different cases: the \textit{flat case}, namely when $f'$ is very small, and the \textit{non-flat case}. In this proof we write $M=\|f\|_{L^{\infty}}$. Let $\varepsilon >0$ and $k_0 = k_0(\varepsilon)$ such that for all $k\geqslant k_0$, $f$ attains its maximum at the point $x_0 \in [1-\varepsilon, 1]$. This could be seen from \cref{thm:muntz-bound}. In the following we only consider $\delta \leqslant \varepsilon$. 
\medskip 

\textit{Flat case.} Let us assume that $\|f'\|_{L^{\infty}} \leqslant \frac{M}{\delta}$ for some $\delta$. Then, observe that the triangle inequality and the mean-value theorem imply that for any $x \in [1-\delta,1]$ there holds    
\begin{align*}
	|f(x)| & \geqslant |f(x_0)| - |f(x)-f(x_0)| \geqslant M - \|f'\|_{L^{\infty}}|x-x_0| \\
	& \geqslant  M - \frac{M}{\delta}|x-x_0| \geqslant 0,
\end{align*}
where we have used the assumption on $f'$. Next, we integrate: 
\begin{align*}
	\|f\|_{L^p(\mathrm{d}\nu_{\alpha})}^p \geqslant \int_{x_0-\frac{\delta}{2}}^{x_0}|f(x)|^p(1-x)^{\alpha}\,\mathrm{d}x &\geqslant \int_{x_0-\frac{\delta}{2}}^{x_0} \left(M - \frac{M}{\delta}|x-x_0|\right)^p (1-x)^{\alpha}\,\mathrm{d}x \\
	& \geqslant \int_{x_0-\frac{\delta}{2}}^{x_0} \left(M - \frac{M}{2}\right)^p(1-x)^{\alpha}\,\mathrm{d}x \\
	& = 2^{-p}M^p \int_{x_0-\frac{\delta}{2}}^{x_0} (1-x)^{\alpha}\,\mathrm{d}x =: 2^{-p}M^pA_{\delta,\alpha}(x_0).
\end{align*}
Observe that $A_{\delta,\alpha}$ is a continuous positive function therefore 
\[
	a_{\delta,\alpha,\varepsilon}:= \inf_{x_0 \in [1-\varepsilon,1]}A_{\delta,\alpha}(x_0) >0,
\]
so that finally 
\[
	\|f\|_{L^p(\nu_{\alpha})}^p \geqslant 2^{-p}a_{\delta,\alpha,\varepsilon} \|f\|_{L^{\infty}}^p,
\]
which end the proof in this case.
\medskip 

\textit{Non-flat case.} Assume $\|f'\|_{L^{\infty}} > \frac{M}{\delta}$, and let $A$, given by \cref{coro:loc-max}, and up to diminishing $\delta$ we can assume that $[1-A\frac{M}{\|f'\|_{L^{\infty}}},1] \subset [1-\varepsilon, 1]$. Note that again we can write 
\[
	|f(x)| \geqslant |f(x_0)| - |f(x)-f(x_0)| \geqslant M + \|f'\|_{L^{\infty}}(x_0-x)\geqslant 0,	
\]
for any $x \in [x_0-\frac{M}{\|f'\|_{L^{\infty}}},x_0]$
so that 
\begin{align*}
	\|f\|_{L^p(\mathrm{d}\nu_{\alpha})}^p &\geqslant \int_{x_0-\frac{M}{\|f'\|_{L^{\infty}}}}^{x_0} \left(M + \|f'\|_{L^{\infty}}(x_0-x)\right)^p(1-x)^{\alpha}\,\mathrm{d}x \\
	& =	\frac{M^{p+\alpha +1}}{\|f'\|_{L^{\infty}}^{\alpha +1}}\int_{y_0}^{y_0+1}(1+y_0-y)^py^{\alpha}\,\mathrm{d}y =: \frac{M^{p+\alpha +1}}{\|f'\|_{L^{\infty}}^{\alpha +1}} B(y_0),
\end{align*}
where in the last step we have used the change of variable $y=\frac{\|f'\|_{L^{\infty}}}{M}(1-x)$ and have introduced $y_0 = \frac{\|f'\|_{L^{\infty}}}{M}(1-x_0) \in [0,\varepsilon \frac{\|f'\|_{L^{\infty}}}{M}]$. Remark that $\frac{\|f'\|_{L^{\infty}}}{M}(1-x_0) \leqslant A$ so that $y_0 \in [0,A]$. Note that the function $B$ is positive and continuous, therefore $\inf_{y_0 \in [0,A]}B(x_0) = b_{A,p}>0$ and this concludes the proof in this case.
\end{proof}

In order to use the fact that $\mu \in \mathcal{M}_{x^{\beta}}$, we will use the following integration by parts inequality, which can be seen as a mean of reducing integrals with respect to $\mathrm{d}\mu$ into explicit weighted Lebesgue estimates. 

\begin{lemma}[Integration by parts \cite{CFT}, Lemma 2.2]\label{lemm:IPP} Assume that $\mu$ is a positive Borel measure supported on $[0,1]$. Let $\rho : \mathbb{R}_+ \to \mathbb{R}_+$ be an increasing $C^1$ function satisfying $\rho(0)=0$. Assume that there holds $\mu([1-\varepsilon])\leqslant \rho(\varepsilon)$ for all $\varepsilon \in (0,1]$. Then the following identity holds: 
\[
	\int_{[0,1]}g \,\mathrm{d}\mu \lesssim  \int_{[0,1]}g(x)\rho'(1-x)\,\mathrm{d}x.	
\]
\end{lemma}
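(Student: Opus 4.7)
The plan is a two-step argument: Fubini--Tonelli, followed by a classical one-variable integration by parts. The statement as written omits any hypothesis on $g$, but the paper's single application (with $g(t) = t^{p\lambda_k}$) strongly suggests the implicit assumptions that $g : [0,1] \to \mathbb{R}_+$ is a non-negative, absolutely continuous, non-decreasing function vanishing at $0$. Identifying this admissible class of test functions is arguably the main ``obstacle'' here; once settled, both steps are routine.

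First, I would use $g(x) = \int_0^x g'(y)\,\mathrm{d}y$ (valid since $g(0)=0$) and swap the order of integration via Tonelli (legitimate because $g' \geq 0$ and $\mu \geq 0$):
\[
\int_{[0,1]} g(x)\,\mathrm{d}\mu(x) = \int_{[0,1]} \int_0^x g'(y)\,\mathrm{d}y\,\mathrm{d}\mu(x) = \int_0^1 g'(y)\,\mu([y,1])\,\mathrm{d}y.
\]
Invoking the hypothesis with $\varepsilon = 1-y$ (so that $\mu([y,1]) \leq \rho(1-y)$) yields
\[
\int_{[0,1]} g\,\mathrm{d}\mu \;\leq\; \int_0^1 g'(y)\,\rho(1-y)\,\mathrm{d}y.
\]

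Second, a classical integration by parts on the right-hand side gives
\[
\int_0^1 g'(y)\,\rho(1-y)\,\mathrm{d}y = \bigl[g(y)\rho(1-y)\bigr]_{0}^{1} + \int_0^1 g(y)\,\rho'(1-y)\,\mathrm{d}y,
\]
and the boundary contribution $g(1)\rho(0) - g(0)\rho(1)$ is $0$ since both $g$ and $\rho$ vanish at $0$. Combining these two displays yields exactly the claimed inequality, with implicit constant $1$. For the applications in the present paper (e.g.\ $g(t) = t^{p\lambda_k}$, $\rho(\varepsilon) = \varepsilon^\beta$) all regularity assumptions are trivially met, so no further technical work is needed.
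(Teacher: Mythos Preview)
Your proof is correct and is precisely the standard argument one expects for this lemma. Note, however, that the paper does not actually prove this statement: it is quoted verbatim from \cite{CFT}, Lemma~2.2, and used as a black box throughout. Your identification of the implicit hypotheses on $g$ (non-negative, non-decreasing, absolutely continuous, $g(0)=0$) is spot on and matches every application in the paper, where $g$ is always a monomial $t^{\lambda}$ with $\lambda>0$ or a finite positive combination thereof.
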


\begin{proof}[Proof of \cref{prop:generalized-bernstein}] Using \cref{lemm:IPP} with $\rho(x)=x^{\beta}$ and also using \cref{thm:muntz-bound} we can write 
\begin{align}
	\int_{[0,1]} |f_k|^p \,\mathrm{d}\mu &\lesssim \int_{[0,1]}|f(x)|^p(1-x)^{\beta -1}\,\mathrm{d}x \notag \\ 
	& \lesssim \|f\|^p_{L^{\infty}}\int_{[0,1]} x^{\frac{(\lambda_{n_k}+1)p}{N}}(1-x)^{\beta +1}\,\mathrm{d}x = \|f\|^p_{L^{\infty}} B\left(\frac{(\lambda_{n_k}+1)p}{N} + 1,\beta\right) \label{eq.boundsfk}
\end{align}
where $B$ stands for the Beta function. Expressing in terms of Gamma functions and using the Stirling formula $\Gamma(x+1) \sim C e^{-x}x^{x + \mez}$, we find that 
\begin{equation}
	\label{eq.Beta-asymptotics}
	B\left(\frac{(\lambda_{n_k}+1)p}{N},\beta\right) = \frac{\Gamma\left(\frac{(\lambda_{n_k}+1)p}{N} +1\right)\Gamma(\beta)}{\Gamma\left(\frac{(\lambda_{n_k}+1)p}{N} +1+\beta\right)} \sim \lambda_{n_k}^{-\beta}
\end{equation}
as $k\to \infty$, where the implicit constant only depends on $p,n,\beta$. Therefore, it remains to explain that 
\begin{equation}
	\|f\|^p_{L^{\infty}} \leqslant \lambda_{n_k}^{p\frac{1+\alpha}{q}}\|f\|_{L^q(\nu_{\alpha})}^p.
\end{equation}
In order to obtain such an inequality, we remark that if $k \geqslant k_0$ given by \cref{lem:bernstein-newman}, either we have $\|f\|_{L^{q}(\nu_{\alpha})} \geqslant \|f\|_{L^{\infty}}$ which is enough as $\lambda_{n_k} \to \infty$; or we have 
\[
	\frac{\|f\|_{L^{\infty}}^{1+\frac{1+\alpha}{q}}}{\|f'\|_{L^{\infty}}^{\frac{1+\alpha}{q}}} \leqslant \|f\|_{L^{q}(\nu_{\alpha})}.
\]
Combined with \cref{lem:bernstein-newman} this yields 
\[
	\|f\|_{L^{\infty}}^{1+\frac{1+\alpha}{q}} \leqslant  \|f\|_{L^{q}(\nu_{\alpha})} \|f'\|_{L^{\infty}}^{\frac{1+\alpha}{q}} \leqslant \|f\|_{L^{q}(\nu_{\alpha})} \lambda_{n_k}^{\frac{1+\alpha}{q}} \|f\|_{\infty}^{\frac{1+\alpha}{q}}, 
\]
which finishes the proof in this case. To deal with the cases $k \leqslant k_0$ one can use equivalence of norms in finite-dimensional spaces which is not an issue since the rank $k_{0}$ only depends on the constant of block lacunarity, which is always bounded from above. 
\end{proof}

\begin{remark} 	
This can be slightly generalized to family of weights given by $\omega_{\alpha,\beta}(x)=(1-x)^{\alpha}\vert\log(1-x)\vert^{\beta}$ whenever $\alpha >-1$ and $\beta \geq 0$, as one can carry the same estimates as in \eqref{eq.boundsfk}, the only difference is that instead of using the asymptotic bound for the Beta function, one should differentiate the asymptotic relationship \eqref{eq.Beta-asymptotics} $\beta$ times (when $\beta$ is an integer, and interpolate using Hölder's inequality when $\beta$ is not). This differentiation of asymptotic relationships is justified by the convexity of the previous integral viewed as a function of $\beta$.
Then for any $\mu \in \mathcal{M}_{\omega_{1+\alpha,\beta}}$ and any $\tilde{\alpha} >-1$, $\tilde{\beta} \geq 0$, $k\gg 1$ and ant $f_{k}\in F_{k}$, there holds 
$$\|f_{k}\|_{L^{p}(d\mu)}\lesssim \lambda_{n_{k}}^{\delta}\log(\lambda_{n_{k}})^{\eta}\|f_{k}\|_{L^{q}(\omega_{\tilde{\alpha},\tilde{\beta}}dx)}$$ 
where the implied constant only depends on $p, q, \alpha, \tilde{\alpha}, \beta, \tilde{\beta}$ and $\Lambda$ and where $\delta=\frac{(1+\tilde{\alpha})}{q}-\frac{1+\alpha}{p}$ (which may be negative) and $\eta=\frac{\beta}{p}-\frac{\tilde{\beta}}{q}$ (which may be negative as well).	
\end{remark}

\section{The non-singular case: proof of Theorem \ref{thm:non-sing}}\label{sec.nonsing}

We are now ready now to prove the characterization of generalized Carleson embedding for any measure $\mu$ in $\mathcal{M}_{x^\beta}$ when $\beta\geq 1$. One could use \cref{thm:Lp-frames} to directly obtain weighted Carleson embeddings, and this is given in the introduction with no proof. We chose here to only use \cref{thm.gm} and interpolation techniques to highlight that Müntz spaces enjoy an interesting interpolation property. We will denote for convenience the sequence $(\lambda_{n_k})_{k\geq 0}$ by $(\lambda_{k})_{k\geq 0}.$
  
Let us start by proving $(ii) \Rightarrow (i)$. Remark that as $(1-\varepsilon)^{\frac{p}{\varepsilon}} \underset{\varepsilon \to 0}{\longrightarrow} e^{-p}$, we have 
\[
	\mu([1-\varepsilon, 1])=\int_{1-\varepsilon}^1 \,\mathrm{d}\mu \leqslant Ce^p \int_{1-\varepsilon}^1 (1-\varepsilon)^{\frac{p}{\varepsilon}} \,\mathrm{d}\mu \lesssim \int_{1-\varepsilon}^1 t^{\frac{p}{\varepsilon}}\,\mathrm{d}\mu \lesssim \int_0^1 t^{\frac{p}{\varepsilon}}\,\mathrm{d}\mu, 
\]
and observe that if we set $\varepsilon = \lambda_k^{-1}$ and evaluate \eqref{eq:embedding} with $f(t)=t^{\lambda_{k}}$ we obtain 
\[
	\mu([1-\lambda_k^{-1}, 1]) \lesssim \int_{[0,1]}t^{\lambda_{k}p}\,\mathrm{d}\mu(t) \lesssim \left(\int_{[0,1]}t^{\lambda_{k}\frac{p}{\beta}}\,\mathrm{d}t\right)^{\beta} \lesssim \lambda_k^{-\beta}. 	
\]
The conclusion now follows from the subgeometricity of $(\lambda_k)_{k\geqslant 0}$ and the fact that $\mu$ is positive: for any $\varepsilon >0$ we chose $k\geqslant 0$ such that $\lambda_{k+1}^{-1}\leq \varepsilon \leq \lambda_k^{-1}$, therefore 
\[
	\mu([1-\varepsilon,1]) \leqslant \mu([1-\lambda_k^{-1}, 1]) \lesssim \lambda_k^{-\beta} \lesssim \varepsilon^{\beta},
\]
which proves that $\mu \in\mathcal{M}_{x^\beta}$.  

In order to prove $(i) \Rightarrow (ii)$, we distinguish between the integer and non-integer $\beta$ cases. 
\medskip 

\textit{Case $p=\beta = n$ is a natural integer.} Let $f= \sum_{k\geqslant 0} f_k \in M_{\Lambda}$ with $f_k \in F_k$. We start with an application of \cref{thm:muntz-bound} and \cref{lemm:IPP}, which gives 
\begin{align*}
	\|f\|^{n}_{L^n(\mu)}& \lesssim \sum_{i_{1},\dots,i_{n}\geq 0}\|f_{i_{1}}\|_{L^{\infty}}\ldots\|f_{i_{n}}\|_{L^{\infty}}\int_{[0,1]}t^{\frac{\lambda_{i_{1}}+\cdots+\lambda_{i_{n}}+n}{N}}(1-t)^{n-1}\mathrm{d}t \\
	& \lesssim  \sum_{i_{1},\dots,i_{n}\geq 0} B\left(\frac{\lambda_{i_1} + \cdots + \lambda_{i_n} +n}{N},n\right) \prod_{j=1}^n \lambda_{i_j}\|f_{i_{j}}\|_{L^1} \\
	& \lesssim \sum_{i_{1},\dots,i_{n}\geq 0} \frac{\lambda_{i_1} \cdots \lambda_{i_n}}{\lambda_{i_1}^n + \cdots \lambda_{i_n}^n} \prod_{j=1}^n \|f_{i_{j}}\|_{L^1},
\end{align*}
where we have used \cref{prop:generalized-bernstein}, the definition of the beta function and its asymptotics. By the arithmetic and geometric means inequality, we observe that $\frac{\lambda_{i_1} \cdots \lambda_{i_n}}{\lambda_{i_1}^n + \cdots \lambda_{i_n}^n} \lesssim 1$ so that it follows 
\[
	\|f\|^{n}_{L^n(\mu)} \lesssim \sum_{i_{1},\dots,i_{n}\geq 0}\prod_{j=1}^n \|f_{i_{j}}\|_{L^1} = \prod_{j=1}^n \sum_{i_j \geqslant 0} \|f_{i_j}\|_{L^1} \lesssim \|f\|_{L^1}^n,
\]
where the last inequality stems for an application of \cref{thm.gm}.
\medskip 

\textit{Case $p=\beta \in (n,n+1)$, not an integer.} As we proceed with an interpolation technique, let us write $\beta=n\theta+(n+1)(1-\theta)$ for some $\theta \in (0,1)$. As before, we start with an application of \cref{thm:muntz-bound} and \cref{lemm:IPP} which give 
\[
	\int_{[0,1]}|f|^{\beta}\,\mathrm{d}\mu \lesssim \int_{0}^1\left(\sum_{k \geqslant 0}\|f_k\|_{L^{\infty}} t^{\frac{\lambda_{k}+1}{N}}\right)^{\beta}(1-t)^{\beta-1}\,\mathrm{d}t.
\]
Then we rewrite
\begin{multline*}
	\left(\sum_{k \geqslant 0}\|f_k\|_{L^{\infty}} t^{\frac{\lambda_{k}+1}{N}}\right)^{\beta}(1-t)^{\beta-1} = \left(\left(\sum_{k \geqslant 0}\|f_k\|_{L^{\infty}} t^{\frac{\lambda_{k}+1}{N}}\right)^n(1-t)^{n-1}\right)^{\theta} \\
	\times \left(\left(\sum_{k \geqslant 0}\|f_k\|_{L^{\infty}} t^{\frac{\lambda_{k}+1}{N}}\right)^{n+1}(1-t)^{n+1-1}\right)^{1-\theta}
\end{multline*}
and apply Hölder's inequality to obtain 
\[
	\int_{[0,1]}|f|^{\beta}\,\mathrm{d}\mu \lesssim \left\|\sum_{k \geqslant 0}\|f_k\|_{L^{\infty}} t^{\frac{\lambda_{k}+1}{N}}\right\|^{n\theta}_{L^n(\nu_{n-1})} \left\|\sum_{k \geqslant 0}\|f_k\|_{L^{\infty}} t^{\frac{\lambda_{k}+1}{N}}\right\|^{(n+1)(1-\theta)}_{L^{n+1}(\nu_{n})}. 
\]
We now apply the integral case (in both cases: $p=\beta=n$ and $p=\beta=n+1$) to obtain
which assumes that we also change the sequence $\Lambda$ and yields
\[
	\int_{[0,1]}|f|^{\beta}\,\mathrm{d}\mu \lesssim \|f\|_{L^1}^{n\theta } \times  \|f\|_{L^1}^{(n+1)(1-\theta) }  \lesssim \|f\|^{\beta}_{L^1}, 
\]

In order to deal with the $p>\beta$ case, we exploit the lacunary property of $\Lambda$, which will be exemplified first in the case $p=2^{\ell}\beta$. 
\medskip 

\textit{Case $p=2^{\ell}\beta$, $\ell \geqslant 0$.} Let us assume that the result has been obtained for $p=2^{\ell}\beta$ (the previous step serves as an initialization step of this induction proof). Let us prove the result for $p=2^{\ell+1}\beta$. Again, we rely on \cref{thm:muntz-bound} and \cref{lemm:IPP} to write 
\begin{align*}
	\int_{[0,1]}|f|^{p}\,\mathrm{d}\mu &\lesssim \int_{[0,1]}\left(\left(\sum_{k\geqslant 0}\|f_{k}\|_{L^{\infty}}t^{\frac{\lambda_{k}+1}{N}}\right)^{2}\right)^{2^{\ell}\beta}(1-t)^{\beta-1}\,\mathrm{d}t \\
	& \lesssim \left\|\left(\sum_{k\geqslant 0}\|f_{k}\|_{L^{\infty}}t^{\frac{\lambda_{k}+1}{N}}\right)^{2}\right\|^{\frac{p}{2}}_{L^\frac{p}{2\beta}},	
\end{align*}
where we have used the induction hypothesis on $\frac{p}{2}=2^{\ell}\beta$. 
We now use now some classical trick that we learned from lacunary Fourier theory: consider the two functions $h_1, h_2$ defined for all $t \in [0,1)$ by 
\[
	h_{1}(t)=\sum_{k\geq 1}\|f_{k}\|_{L^{\infty}}t^{\frac{\lambda_{k}+1}{N}} \text{ and } h_{2}(t)=\sum_{k \geq 0}\|f_{k}\|_{L^{\infty}}^{2}t^{\frac{\lambda_{k}+1}{N}}.
\] 
An application of \cref{thm.gm} immediately provides 
\begin{equation}
	\label{eq.trick}
	\|h_{1}^{2}\|_{L^{2^{\ell}}}^{2^{\ell}}\approx \sum_{k \geq 0}\|f_{k}\|_{\infty}^{2^{\ell+1}}\lambda_{k}^{-1} \approx \sum_{k\geq 0}(\|f_{k}\|_{\infty}^2)^{2^{\ell}}\lambda_{k}^{-1} \approx \|h_{2}\|_{L^{2^{\ell}}}^{2^{\ell}},
\end{equation}
which allows us to write that
\begin{multline*}
\int_{[0,1]}\vert f \vert ^{p}\,\mathrm{d}\mu \lesssim \|h_1^2\|_{L^{2^{\ell}}}^{2^{\ell \beta}} \lesssim \|h_2\|_{L^{2^{\ell}}}^{2^{\ell}\beta} \lesssim \left(\sum_{k\geq 0} \|f_k\|_{L^{\infty}}^{2^{\ell +1}} \lambda_k^{-1}\right)^{\beta} \\
\lesssim \left(\sum_{k\geq 0} \|f_k\|_{L^{2^{\ell +1}}}^{2^{\ell +1}}\right)^{\beta}\lesssim \|f\|_{L^{\frac{p}{\beta}}}^{\beta} \lesssim \|f\|_{L^{\frac{p}{\beta}}}^{2^{\ell+1}\beta} = \|f\|_{L^{\frac{p}{\beta}}}^{p}. 
\end{multline*}
where we have used \cref{prop:generalized-bernstein} and \cref{thm.gm} in the last line to conclude. 

\textit{Case $p>\beta$.} Assume that $p$ is not of the form $2^{\ell}\beta$ and chose $\ell \geqslant 0$ such that $q:=2^{\ell}\beta <p< 2^{\ell +1}\beta =:r$ and write $p=(1-\theta)q+\theta r$ for some $\theta \in (0,1)$. 

We start with an application of \cref{thm:muntz-bound} in order to write 
\begin{align*}
	\|f\|^{p}_{L^{p}(\mathrm{d}\mu)} &\lesssim \int_{[0,1]}\left(\sum_{k\geq 0}\|f_{k}\|_{L^{\infty}}t^{\frac{\lambda_{k}+1}{N}}\right)^{p}\,\mathrm{d}\mu(t)\\ 
	&= \int_{[0,1]}\left(\sum_{k\geqslant 0} \|f_{k}\|_{L^{\infty}}^{\theta}t^{\theta\frac{\lambda_{k}+1}{N}}\|f_{k}\|^{(1-\theta)}_{L^{\infty}}t^{(1-\theta)\frac{\lambda_{k}+1}{N}}\right)^{p}\,\mathrm{d}\mu(t) \\ 
	&=\int_{[0,1]}\left(\sum_{k\geq 0} \|f_{k}\|^{\theta}_{L^{a}(\mathrm{d}\nu_{\gamma})}\lambda_{k}^{\frac{(1+\gamma)\theta}{a}}t^{\theta\frac{\lambda_{k}+1}{N}}\|f_{k}\|^{(1-\theta)}_{L^{b}(\mathrm{d}\nu_{\delta})}\lambda_{k}^{\frac{(1+\delta)(1-\theta)}{b}}t^{(1-\theta)\frac{\lambda_{k}+1}{N}}\right)^{p}\,\mathrm{d}\mu(t),
\end{align*}
where we have used \cref{lem:bernstein-newman} for some $a,b\geqslant 1$, $\gamma, \delta >-1$. We now apply the Hölder inequality in the inner sums followed by another application of Hölder's inequality in the integrals so that: 
\begin{align}
	\|f\|^{p}_{L^{p}(\mathrm{d}\mu)} &\lesssim \displaystyle \int_{[0,1]} \Big(\sum\limits_{k \geqslant 0} \|f_{k}\|^{\frac{p}{r}}_{L^{a}(\mathrm{d}\nu_{\gamma})}\lambda_{k}^{\frac{p(1+\gamma)}{ar}}t^{\frac{p}{r}\frac{\lambda_{k}+1}{N}}\Big)^{r\theta}\Big(\sum_{k\geqslant 0}\|f_{k}\|^{\frac{p}{q}}_{L^{b}(\mathrm{d}\nu_{\delta})}\lambda_{k}^{\frac{p(1+\delta)}{bq}}t^{\frac{p}{q}\frac{\lambda_{k}+1}{N}}\Big)^{q(1-\theta)}\,\mathrm{d}\mu(t)\notag \\
	& \lesssim \left\|\sum_{k\geqslant 0}\|f_{k}\|^{\frac{p}{r}}_{L^{a}(\mathrm{d}\nu_{\gamma})}\lambda_{k}^{\frac{p(1+\gamma)}{ar}}t^{\frac{p}{r}\frac{\lambda_{k}+1}{N}}\right\|^{r\theta}_{L^{r}(\mathrm{d}\mu)}\left\|\sum_{k\geq 0} \|f_{k}\|^{\frac{p}{q}}_{L^{b}(\mathrm{d}\nu_{\delta})}\lambda_{k}^{\frac{p(1+\delta)}{bq}}t^{\frac{p}{q}\frac{\lambda_{k}+1}{N}}\right\|_{L^q(\mathrm{d}\mu)}^{q(1-\theta)}.\label{eq.after-holder}
\end{align}
Applying the induction hypothesis for $r$ (resp. $q$) to the functions $g=\sum_{k\geqslant 0} g_k$ where $g_k(t)=\|f_{k}\|^{\frac{p}{r}}_{L^{a}(\mathrm{d}\nu_{\gamma})}\lambda_{k}^{\frac{p(1+\gamma)}{ar}}t^{\frac{p}{r}\frac{\lambda_{k}+1}{N}}$, (resp. $g_k(t)=\|f_{k}\|^{\frac{p}{q}}_{L^{b}(\mathrm{d}\nu_{\delta})}\lambda_{k}^{\frac{p(1+\delta)}{bq}}t^{\frac{p}{q}\frac{\lambda_{k}+1}{N}}$), we obtain 
\begin{align*}
	\|f\|^{p}_{L^{p}(\mathrm{d}\mu)} & \lesssim \left\|\sum_{k\geqslant 0}\|f_{k}\|^{\frac{p}{r}}_{L^{a}(\mathrm{d}\nu_{\gamma})}\lambda_{k}^{\frac{p(1+\gamma)}{ar}}t^{\frac{p}{r}\frac{\lambda_{k}+1}{N}}\right\|^{r\theta}_{L^{\frac{r}{\beta}}}\left\|\sum_{k\geq 0} \|f_{k}\|^{\frac{p}{q}}_{L^{b}(\mathrm{d}\nu_{\delta})}\lambda_{k}^{\frac{p(1+\delta)}{bq}}t^{\frac{p}{q}\frac{\lambda_{k}+1}{N}}\right\|_{L^{\frac{q}{\beta}}}^{q(1-\theta)} \\ 
	& \lesssim \Big(\sum_{k\geq 0} \|f_{k}\|^{\frac{p}{\beta}}_{L^{a}(\mathrm{d}\nu_{\gamma})}\lambda_{k}^{\frac{p(1+\gamma)}{a\beta}-1}\Big)^{\theta\beta}\Big(\sum_{k\geq 0} \|f_{k}\|^{\frac{r}{\beta}}_{L^{b}(\mathrm{d}\nu_{\delta})}\lambda_{k}^{\frac{r(1+\delta)}{b\beta}-1}\Big)^{(1-\theta)\beta}
\end{align*}
where we have used \cref{thm.gm}. Note that by an application of \cref{prop:generalized-bernstein} we have 
\[
	\|f_{k}\|^{\frac{p}{\beta}}_{L^{a}(\mathrm{d}\nu_{\gamma})}\lambda_{k}^{\frac{p(1+\gamma)}{a\beta}-1} \lesssim \|f_{k}\|^{\frac{p}{\beta}}_{L^{\frac{p}{\beta}}} \lambda_k^{\frac{p}{\beta}(\frac{\beta}{p}-\frac{1+\gamma}{a})}\lambda_{k}^{\frac{p(1+\gamma)}{a\beta}-1} = \|f_{k}\|^{\frac{p}{\beta}}_{L^{\frac{p}{\beta}}}, 
\]
therefore we have obtained 
\[
	\|f\|^{p}_{L^{p}(\mathrm{d}\mu)} \lesssim \left(\sum_{k\geq 0} \|f_k\|^{\frac{p}{\beta}}_{L^{\frac{p}{\beta}}}\right)^p \lesssim \|f\|^p_{L^{\frac{p}{\beta}}},
\]
as claimed, where we have again used \cref{thm.gm}. 

\section{The singular case: proof of Theorem~\ref{thm:sing}}\label{sec.sing}

A key idea in the proof of \cref{thm:sing} is to estimate the derivative $f'$. In order to do so, let us relate the $L^p(\mathrm{d}\mu)$ norm of $f$ to a quantity involving $f'$. This type of integration trick emerges from the study of Hardy-Bloch spaces of analytic functions on the unit disk.

\begin{lemma}\label{lem:derivative-translation}
	Let $p\geq1.$ Let $\mu$ be a Borel regular measure supported on $[0,1].$ Then, for any $f \in M_{\Lambda}^{p}$ such that $f(0)=0$, 
	%for any $\varepsilon>0$, 
	we have 
	\[ 
		\int_{[0,1]}|f(t)|^{p}\,\mathrm{d}\mu(t) \lesssim_{p} \int_{[0,1]}\int_{[0,1]}|f'(\rho t)||f(\rho t)|^{p-1} d\mu(t) \,\mathrm{d}\rho.
	\]
\end{lemma}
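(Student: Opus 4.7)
The core idea is to write $|f(t)|^p$ as the integral from $0$ to $1$ of the radial derivative $\frac{d}{d\rho}|f(\rho t)|^p$, which is possible precisely because $f(0)=0$ forces the boundary term at $\rho=0$ to vanish. Concretely, for $f \in M_{\Lambda}$ (so $f$ is smooth) and any fixed $t \in [0,1]$, the map $\rho \mapsto |f(\rho t)|^p$ is absolutely continuous on $[0,1]$ with $|f(0)|^p = 0$, so
\[
|f(t)|^p \;=\; \int_0^1 \frac{d}{d\rho}\,|f(\rho t)|^p\,\mathrm{d}\rho.
\]

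The second step is to bound this derivative pointwise. Writing $|f|^p = (|f|^2)^{p/2} = (f\overline{f})^{p/2}$, the chain rule gives
\[
\frac{d}{d\rho}|f(\rho t)|^p \;=\; p\,t\,|f(\rho t)|^{p-2}\,\RE\!\bigl(\overline{f(\rho t)}\,f'(\rho t)\bigr),
\]
whose modulus is at most $p\,t\,|f(\rho t)|^{p-1}|f'(\rho t)|$. Using $t\leqslant 1$, we obtain the pointwise inequality
\[
|f(t)|^p \;\leqslant\; p\int_0^1 |f'(\rho t)|\,|f(\rho t)|^{p-1}\,\mathrm{d}\rho.
\]
Integrating with respect to $\mathrm{d}\mu(t)$ and applying Fubini (all integrands are non-negative and finite since $f,f'$ are continuous on the compact set $[0,1]$) yields the claim with implicit constant $p$.

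The one subtlety is the regularity of $|\cdot|^p$ at its zeros when $1 \leqslant p < 2$; there the factor $|f|^{p-2}$ is singular, so the chain rule used above must be justified. The standard fix is to replace $|f|^p$ by $(|f|^2 + \varepsilon)^{p/2}$, run the same identity (now with perfectly smooth integrand), and pass to the limit $\varepsilon \to 0^+$ by dominated convergence—the dominating function on the right-hand side is $p\,|f'(\rho t)|(|f(\rho t)|^2 + 1)^{(p-1)/2}$, which is bounded since $f$ and $f'$ are continuous. Finally, to extend from $M_\Lambda$ to $M_\Lambda^p$, one may either restrict the statement to polynomials (which is how it is used later in the proof of \cref{thm:sing}) or invoke the continuity of both sides as functionals of $f$ along an approximating sequence in $M_\Lambda$; no deeper obstacle arises.
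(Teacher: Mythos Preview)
Your proof is correct and follows essentially the same approach as the paper: both use the fundamental theorem of calculus along the ray $\rho \mapsto f(\rho t)$ together with the pointwise bound $\bigl|\tfrac{d}{d\rho}|f(\rho t)|^p\bigr| \leqslant p\,t\,|f'(\rho t)||f(\rho t)|^{p-1}$, then integrate in $\mu$ and apply Fubini. The only organizational difference is that the paper first treats $p=1$ and then reduces the general case to it by applying the $p=1$ inequality to $|f|^p$, whereas you handle all $p$ in one stroke; your explicit $\varepsilon$-regularization for $1\leqslant p<2$ is a point the paper glosses over.
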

	
\begin{proof}
Let us start with the case $p=1$. Using $f(0)=0$, the triangle inequality and changes of variables, we obtain 
\begin{multline*}
	\int_{[0,1]}|f(t)|\,\mathrm{d}\mu(t) \leq \int_{0}^1\int_0^t |f'(\rho)|\,\mathrm{d}\rho \,\mathrm{d}\mu(t) \\
	\lesssim \int_{0}^1\int_0^1t|f'(\rho t)|\,\mathrm{d}\rho\,\mathrm{d}\mu(t) \lesssim \int_{[0,1]^2}|f'(\rho t)|\,\mathrm{d}\rho\,\mathrm{d}\mu(t). 
\end{multline*}

When $p>1$ we start by observing that $|(|f|^p)'(t)| \lesssim |f'(t)||f(t)|^{p-1}$ which allows us to apply the $p=1$ case to $|f|^p$ an obtain  
\[
	\int_{[0,1]}|f|^{p} \,\mathrm{d}\mu(t)\lesssim \int_{[0,1]} \vert f'(\rho t) \vert \vert f(\rho t)\vert^{p-1} \,\mathrm{d}\mu(t)\,\mathrm{d}\rho. \qedhere
\]
\end{proof}
	
A useful result, which we will refer to as the \textit{kernel estimate} is the following. 

\begin{lemma}[\cite{GaLe}, Lemma 2.10]\label{lem:kernel-est} Let $\alpha >0$ and assume that $\Gamma$ is quasi-geometric. Then there exists $C_1, C_2 >0$ such that for all $t \in [0,1)$ there holds 
\[
	C_1(1-t)^{-\alpha} \leq \sum_{k \geqslant 1} \lambda_k^{\alpha}t^{\lambda_k} \leqslant C_2(1-t)^{-\alpha}.	
\]
\end{lemma}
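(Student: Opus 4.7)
The plan is to discretize: the continuous function $x \mapsto x^{\alpha} t^x$ has a unique maximum at $x_{\star} = \alpha/(-\log t) \sim 1/(1-t)$, of size comparable to $(1-t)^{-\alpha}$. Under the quasi-geometric assumption $\lambda_k \sim q^k$, the nodes $(\lambda_k)_{k\geqslant 1}$ are spaced by a factor $q$, so only $O(1)$ indices fall into a neighborhood of $x_{\star}$; the upper and lower bounds should both therefore match the peak size.

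First I would introduce, for $t$ close enough to $1$, the integer $K=K(t) \geqslant 1$ characterized by $\lambda_K \leqslant (1-t)^{-1} < \lambda_{K+1}$, so that $\lambda_K \sim (1-t)^{-1}$ by quasi-geometricity. For the lower bound, I keep only the single term $k=K$: since $\lambda_K(1-t) \leqslant 1$ and $\log t \geqslant -(1-t)/t$, I get $t^{\lambda_K} = \exp(\lambda_K \log t) \gtrsim 1$ uniformly, which yields
\[
    \sum_{k \geqslant 1} \lambda_k^{\alpha} t^{\lambda_k} \geqslant \lambda_K^{\alpha}\, t^{\lambda_K} \gtrsim (1-t)^{-\alpha}.
\]

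For the upper bound I would split at $K$. On the low range $k \leqslant K$, I use $t^{\lambda_k} \leqslant 1$ and quasi-geometricity $\lambda_k^{\alpha} \leqslant C q^{-\alpha(K-k)} \lambda_K^{\alpha}$ to sum a geometric series:
\[
    \sum_{k=1}^{K} \lambda_k^{\alpha}\, t^{\lambda_k} \lesssim \lambda_K^{\alpha} \sum_{j \geqslant 0} q^{-j\alpha} \lesssim (1-t)^{-\alpha}.
\]
On the high range $k > K$, I combine the elementary bound $t^{\lambda_k} \leqslant e^{-\lambda_k(1-t)}$ (from $\log t \leqslant t-1$) with $\lambda_k(1-t) \gtrsim q^{k-K}$ to obtain
\[
    \sum_{k > K} \lambda_k^{\alpha}\, t^{\lambda_k} \lesssim \lambda_K^{\alpha} \sum_{j \geqslant 1} q^{j\alpha}\, e^{-c q^{j}} \lesssim (1-t)^{-\alpha},
\]
where the tail series converges doubly-exponentially.

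The hard part will essentially be bookkeeping: controlling the constants hidden in the quasi-geometric comparisons and handling $t$ away from $1$. On any compact subset of $[0,1)$ both sides of the claimed equivalence are continuous and strictly positive (the sum being a power series with positive coefficients), so that regime is handled by a routine compactness argument. I would emphasize that the ``missing'' factor compared to the continuous analog $\int_0^{\infty} x^{\alpha} t^x\,\mathrm{d}x \sim (1-t)^{-(\alpha+1)}$ is precisely explained by the lacunary spacing: only $O(1)$ indices fall into the peak window of width $\sim (1-t)^{-1}$, which is what makes the whole lemma work.
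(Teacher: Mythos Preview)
The paper does not give its own proof of this lemma; it is quoted from \cite{GaLe} and used as a black box. So there is nothing to compare your argument against within the present paper.

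That said, your argument is the standard one and is correct in substance. The splitting at the index $K$ with $\lambda_K \sim (1-t)^{-1}$, the geometric sum on the low range, and the doubly-exponential tail on the high range are exactly the right ingredients. One cosmetic point: in the high-range estimate you write $\lambda_k^{\alpha} \lesssim q^{j\alpha}\lambda_K^{\alpha}$, but the growth side of the quasi-geometric hypothesis gives $\lambda_k \leqslant Q^{k-K}\lambda_K$ for the \emph{upper} ratio bound $Q$, not the lower one $q$; this does not affect convergence since $\sum_j Q^{\alpha j}e^{-cq^j}<\infty$ regardless.

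One genuine (if minor) gap: your compactness argument for $t$ away from $1$ asserts that the sum is strictly positive on any compact subset of $[0,1)$, but at $t=0$ the sum vanishes (all $\lambda_k>0$), while $(1-t)^{-\alpha}=1$. So the lower bound as literally stated fails at $t=0$; either the statement is implicitly meant for $t$ in a neighbourhood of $1$ (which is how the paper actually uses it), or one should restrict the compactness argument to $[\varepsilon,1)$ and note that the lemma is only invoked in that regime.
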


For our purpose we need a pointwise estimate, based on the Berstein inequality for Müntz sequences, which we state as follows. 

\begin{lemma}\label{lem:derivative-est}
Let $\Lambda$ be a quasi-lacunary sequence. For any $f\in M_{\lambda}$ and any $\rho, t, \theta \in (0,1)$ there holds  
\[ 	
	\vert f'(\rho t) \vert \lesssim \frac{1}{1-\rho t} \sum_{k\geq 0}\|f_{k}\|_{L^{\infty}}\rho^{\frac{\lambda_{k}}{N}(1-\theta)}, 
\] 
where the implicit constant does not depend on $\rho$ and $t$, but does depend on $\Lambda$ (more precisely on $N, q$) and $\theta$.
\end{lemma}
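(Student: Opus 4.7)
The plan is to estimate each block $|f_k'(\rho t)|$ separately, using \cref{thm:muntz-bound} applied to a cleverly chosen auxiliary polynomial, and then to sum over $k$. The key observation is that for $f_k=\sum_{\lambda\in E_k}a_\lambda t^\lambda \in F_k$, the function
\[
	g_k(y) := y\,f_k'(y) = \sum_{\lambda\in E_k}\lambda a_\lambda y^\lambda
\]
has the same generalized spectrum as $f_k$ and therefore still belongs to $F_k$, so that \cref{thm:muntz-bound} is available for $g_k$ directly.

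First, Newman's inequality (\cref{lem:bernstein-newman}) applied to $f_k$, combined with the quasi-lacunary bound $\lambda_{n_{k+1}}\leq q^{2N}(\lambda_{n_k}+1)$, gives $\|g_k\|_\infty \leq \|f_k'\|_\infty \lesssim \lambda_{n_k}\|f_k\|_\infty$. Plugging this into \cref{thm:muntz-bound} applied to $g_k$, one obtains for every $y\in(0,1]$,
\[
	|f_k'(y)| = \frac{|g_k(y)|}{y} \lesssim \lambda_{n_k}\, y^{(\lambda_{n_k}+1)/N-1}\|f_k\|_\infty.
\]

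The second step is to split the exponent as $(\lambda_{n_k}+1)/N-1 = \lambda_{n_k}(1-\theta)/N + a_k$, where $a_k := \lambda_{n_k}\theta/N + 1/N - 1$. For $k$ large enough one has $a_k>0$, and an elementary calculus computation maximizing $y\mapsto \lambda_{n_k}(1-y)y^{a_k}$ at $y=a_k/(1+a_k)$ yields
\[
	\sup_{y\in(0,1)} \lambda_{n_k}(1-y)\,y^{a_k} \lesssim_{N,\theta} 1.
\]
Specializing to $y=\rho t$ and using $\rho t\leq \rho$ to replace $(\rho t)^{\lambda_{n_k}(1-\theta)/N}$ by the larger $\rho^{\lambda_{n_k}(1-\theta)/N}$, I obtain
\[
	|f_k'(\rho t)| \lesssim \frac{\rho^{\lambda_{n_k}(1-\theta)/N}}{1-\rho t}\|f_k\|_\infty
\]
for every $k$ above some threshold $k_0=k_0(\theta,N)$. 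Summing in $k$ then yields the announced inequality.

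The main obstacle I anticipate is the cosmetic one of handling the finitely many initial indices $k<k_0$, where $a_k$ may fail to be positive, but since the spaces $F_k$ are finite dimensional with dimensions bounded by $N$ and there are only finitely many such $k$, equivalence of norms lets us absorb these terms into the implicit constant (which is allowed to depend on $\Lambda$ and $\theta$). The substantive content of the proof is the identification $g_k=yf_k'\in F_k$, which transfers the pointwise bound of \cref{thm:muntz-bound} from $f_k$ to $f_k'$ at the cost of only one power of $y$.
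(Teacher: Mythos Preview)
Your proof is correct. The first step—obtaining $|f_k'(u)|\lesssim \lambda_{n_k}\,u^{\lambda_{n_k}/N}\|f_k\|_\infty$ from \cref{thm:muntz-bound} and Newman's inequality—matches the paper's (the paper applies \cref{thm:muntz-bound} to $f_k'$ directly, with the spectrum shifted by $-1$; this is equivalent to your observation that $g_k=yf_k'\in F_k$). The genuine difference is in how the factor $1/(1-\rho t)$ is produced. The paper writes
\[
\sum_{k}\lambda_k(\rho t)^{\lambda_k/N}\|f_k\|_\infty \leq \Big(\sup_{k}\lambda_k(\rho t)^{\theta\lambda_k/N}\Big)\sum_{k}\|f_k\|_\infty\,\rho^{(1-\theta)\lambda_k/N},
\]
bounds the supremum by the full series, and then invokes the kernel estimate \cref{lem:kernel-est} to obtain $\sum_k\lambda_k(\rho t)^{\theta\lambda_k/N}\lesssim(1-\rho t)^{-1}$. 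Your route instead treats each block separately via the elementary calculus bound $\sup_{y\in(0,1)}\lambda_{n_k}(1-y)\,y^{a_k}\lesssim N/\theta$, which sidesteps \cref{lem:kernel-est} altogether at the price of the small-$k$ cleanup. Both arguments are short; yours is slightly more self-contained, while the paper's is uniform in $k$ from the outset and reuses a lemma that is needed elsewhere in the section anyway.
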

	
\begin{proof}
Let us start with an application of \cref{thm:muntz-bound} and \cref{lem:newm} to the $f'_k$ (therefore one has to replace $\lambda_{n_k}$ with $\lambda_{n_k}-1$ in \cref{thm:muntz-bound}):  
\[
	|f'(u)|\lesssim \sum_{k\geq 0}\|f_{k}'\|_{L^{\infty}}u^{\frac{\lambda_{k}}{N}} \lesssim \sum_{k\geq 0}\|f_{k}\|_{L^{\infty}}\lambda_{k}u^{\frac{\lambda_{k}}{N}}.
\] 
We then use a crude bound: 
\begin{align*}
	|f'(\rho t)| \lesssim \sum_{k\geq 0}\|f_{k}\|_{L^{\infty}}\lambda_{k}(\rho t)^{\frac{\lambda_{k}}{N}} &\leqslant \sup_{k \geq 0}\lambda_k(\rho t)^{\frac{\theta\lambda_{k}}{N}}\sum_{k\geq 0}\|f_{k}\|_{L^{\infty}}(\rho t)^{\frac{(1-\theta)\lambda_{k}}{N}} \\
	&\leqslant \sum_{k\geq 0}\lambda_k(\rho t)^{\frac{\theta\lambda_{k}}{N}} \sum_{k\geq 0}\|f_{k}\|_{L^{\infty}}\rho^{\frac{(1-\theta)\lambda_{k}}{N}}.
\end{align*}
Using the \textit{kernel estimate} yields 
\[
	|f'(\rho t)| \lesssim \frac{1}{1-(\rho t)^{\frac{\theta}{N}}}\sum_{k\geq 0}\|f_{k}\|_{L^{\infty}}\rho^{\frac{\lambda_{k}+1}{N}(1-\theta)},
\]
so that the conclusion follows from the fact that $1-u^{\frac{\theta}{N}} \sim 1- u$ as $u \to 1$. 
\end{proof}

\begin{proof}[Proof of \cref{thm:sing}]
(\textit{i}) $\Rightarrow$ (\textit{ii}) 
Since $f_k(0)=0$ for $k\geqslant 2$, up to removing the first term, we can assume $f_{k}(0)=0$, for all $k\geq 0$ and $f(0)=0$
Let $\rho, t \in (0,1)$ and also $\theta \in (0,1)$. In the following we apply \cref{lem:derivative-translation}, and we use \cref{lem:derivative-est} to bound $|f'(\rho t)|$ as well as the bound 
\[
	|f(\rho t)| \lesssim \sum\limits_{k \geq 0}\|f_{k}\|_{L^{\infty}}\rho^{\frac{\lambda_{k}+1}{N}},
\]
obtained by an application of \cref{thm:muntz-bound}. Therefore we can write: 
\begin{align*}
	\|f\|_{L^p(\mathrm{d}\mu)}^p & \lesssim \int_{[0,1]^2} \vert f'(\rho t) \vert \vert f(\rho t)\vert^{p-1} \,\mathrm{d}\mu(t)\,\mathrm{d}\rho \\
	& \lesssim \int_{[0,1]^2}\left(\sum_{k\geq 0}\|f_{k}\|_{L^{\infty}}\rho^{\frac{\lambda_{k}+1}{N}}\right)^{p-1}\sum_{k\geq 0}\|f_{k}\|_{L^{\infty}}\rho^{(1-\theta)\frac{\lambda_{k}}{N}}\frac{\mathrm{d}\mu(t)}{1-\rho t}\,\mathrm{d}\rho \\
	& \lesssim \int_{[0,1]}\left(\sum_{k\geq 0}\|f_{k}\|_{\infty}\rho^{(1-\theta)\frac{\lambda_{k}}{N}}\right)^{p}\left(\int_{[0,1]}\frac{\mathrm{d}\mu(t)}{(1-\rho t)}\right)\,\mathrm{d}\rho. 
\end{align*}
Using the Hölder inequality we therefore obtain 
\begin{align*}
	\|f\|_{L^p(\mathrm{d}\mu)}^p & \lesssim \left\|\sum_{k\geq 0}\|f_{k}\|_{\infty}\rho^{(1-\theta)\frac{\lambda_{k}}{N}}\right\|^p_{L^{\frac{p}{\beta}}}\left\|\int_{[0,1]}\frac{\mathrm{d}\mu(t)}{(1-\rho t)}\right\|_{L^{\frac{1}{1-\beta}}(\mathrm{d}\rho)} \lesssim \left\|\sum_{k\geq 0}\|f_{k}\|_{\infty}\rho^{(1-\theta)\frac{\lambda_{k}}{N}}\right\|^p_{L^{\frac{p}{\beta}}}, 
\end{align*}
where we have used (\textit{i}) in the last line. Now, observe that an application of \cref{lem:bernstein-newman} followed by \cref{thm.gm} implies
\[
	\|f\|_{L^p(\mathrm{d}\mu)}^p \lesssim \left\|\sum_{k\geq 0}\|f_{k}\|_{L^{\frac{p}{\beta}}}\lambda_{n_k}^{\frac{\beta}{p}}\rho^{(1-\theta)\frac{\lambda_{k}}{N}}\right\|_{L^{\frac{p}{\beta}}}^{p} \lesssim \left(\sum_{k\geq 0} \|f_{k}\|_{L^{\frac{p}{\beta}}}^{\frac{\beta}{p}}\lambda_{n_k}\lambda_{n_k}^{-1}\right)^{\beta} \\ 
	\lesssim \|f\|_{L^{\frac{p}{\beta}}}^{p},
\]
where the last lines stems for another application of \cref{thm:Lp-frames}. 
\medskip 

(\textit{ii}) $\Rightarrow$ (\textit{iii})
since the closure $\overline{E}_{\Lambda}$ of $E_{\Lambda}:=\operatorname{Span}\{t^{\lambda_{k}}\}_{k\geq 0}$ for the $L^{\frac{p}{\beta}}$ norm is a closed subspace of $M_{\Lambda}^{\frac{p}{\beta}}$, it follows from (\textit{ii}) that $\overline{E}_{\Lambda}$ embedds continuously in $L^p(\mathrm{d}\mu)$. Note that since \cref{thm:Lp-frames} implies 
\[
	\left\|\sum_{k\geq 0} a_k \lambda_k^{\frac{\beta}{p}}t^{\lambda_k}\right\|_{L^{\frac{p}{\beta}}(\mathrm{d}t)}^{\frac{p}{\beta}} \approx \sum_{k\geq 0} |a_k|^{\frac{p}{\beta}}, 	
\]
we infer that we can identify $\overline{E}_{\Lambda}$ with $\ell^{\frac{p}{\beta}}$. Using $\ell^1 \hookrightarrow \ell^p$ we can write: 
\begin{multline*}
	\int_{[0,1]}\sum_{k\geq 0}|a_k|^p\lambda_k^{\beta}t^{p\lambda_k}\,\mathrm{d}\mu(t) \leqslant \int_{[0,1]} \left\vert\sum_{k\geq 0} a_k\lambda_k^{\frac{\beta}{p}}t^{\lambda_k}\right\vert^p\,\mathrm{d}\mu(t) \lesssim \left(\sum_{k\geq 0} |a_k|^{\frac{p}{\beta}}\lambda_k^{\frac{\beta}{p}\times \frac{p}{\beta}-1}\right)^{\beta} \\ \lesssim \left(\sum_{k\geq 0} |a_k|^{\frac{p}{\beta}}\right)^{\beta}
\end{multline*}
where we have used (\textit{ii}) and \cref{lem:bernstein-newman}. Note that this holds for all $(a_k)_{k\geqslant 0} \in \ell^{\frac{p}{\beta}}$ \textit{i.e.} all $(|a_k|^p)_{k\geq 0} \in \ell^{\frac{1}{\beta}}$, therefore by the duality characterization of $\ell^{\frac{1}{1-\beta}}$ (whose dual exponent if $\frac{1}{\beta}$) it follows that $(\lambda_k^{\beta}\int_{[0,1]}t^{p\lambda_k}\,\mathrm{d}\mu(t))_{k\geq 0} \in \ell^{\frac{1}{1-\beta}}$, namely 
\[
	\sum_{k\geq 0} \lambda_{k}^{\frac{\beta}{1-\beta}}\Big(\int_{[0,1]}t^{p\lambda_{k}}\,\mathrm{d}\mu(t)\Big)^{\frac{1}{1-\beta}}<+\infty,
\]
which is (\textit{iii}). 
\medskip 

(\textit{iii}) $\Rightarrow$ (\textit{i}) 
Let $\rho \in (0,1)$ and $p\geq 1$ such that (\textit{iii}) holds. Observe that in the following application of \cref{lem:kernel-est}, the implicit constant does not depend on $\rho$, 
\[
	\int_{[0,1]}\frac{\mathrm{d}\mu(t)}{(1-\rho t)} \approx_{p} \int_{[0,1]}\Big(\sum_{k\geq 0} \lambda_{k}(t\rho)^{p\lambda_{k}} \Big)\,\mathrm{d}\mu(t) = \sum\limits_{k}\lambda_{k}\rho^{p\lambda_{k}}\int_{[0,1]}t^{p\lambda_{k}}d\mu(t),
\]
where we have used Fubini's theorem in the last step. Hence, using \cref{lem:bernstein-newman} in the variable $\rho$ yields
\begin{align*}
	\int_{[0,1]}\Big(\int_{[0,1]}\frac{\mathrm{d}\mu(t)}{(1-\rho t)}\Big)^{\frac{1}{1-\beta}}\,\mathrm{d}\rho & \approx \sum\limits_{k} \lambda_{k}^{\frac{1}{1-\beta}-1}\Big(\int_{[0,1]}t^{p\lambda_{k}}\,\mathrm{d}\mu(t)\Big)^\frac{1}{1-\beta}\\
	& = \sum\limits_{k}\lambda_{k}^{\frac{\beta}{1-\beta}}\Big(\int_{[0,1]}t^{p\lambda_{k}}\,\mathrm{d}\mu(t)\Big)^\frac{1}{1-\beta}<+\infty,
\end{align*}
where we have used the assumption.
\end{proof}

\begin{remark}
Note that if $(\textit{iii})$ is true for one particular $p\geq 1$ then the summability condition $(\textit{iii})$ is also satisfied for any $p\geq 1.$ This is because of the subgeometricity of the sequence $(\lambda_{n_k})$ and as we can observe: condition $(\textit{ii})$ is not sensitive on the integrability parameter $p\geq 1$.
\end{remark}

\section{The decoupling estimates: proof of Theorem~\ref{thm:Lp-frames}}\label{sec.decoupling}

\subsection{The non-singular case} 

To prove \cref{thm:Lp-frames} we will use the so-called method of $T_{\rho}$ dilations in order to obtain the lower bound. We write $T_{\rho} : f(\cdot) \mapsto f(\rho \cdot)$ and denote by $\|T_{\rho}\|$ the norm of this operator on the Banach space $E$ on which it acts.   

\begin{proposition}[\cite{GL}, Proposition 6.3.3]\label{prop:GL633}
Let $E$ be a Banach space and assume that $\sup_{\rho\in[\rho_0,1]}\|T_\rho\|<+\infty$ for some $\rho_0\in (0,1)$, and $\|t^{\lambda_n}\|_E\geq 1.$ Then, there holds:
\[
	\|f_k\|_E\leqslant c \|f\|_E, 
\]
where the constant $c$ depends only on $\Lambda$ and the $\|T_\rho\|$. Actually, if $\eta(y)=4y(1-y)$ one has 
\[
	c \leqslant 2^{(1+2M)N^2+N}\sup_{n\geq 0} \|T_{2^{-\frac{1}{\lambda_n}}}\|^{MN^2}(1+\|T_{2^{-\frac{1}{\lambda_n}}}\|)^{MN^2}
\]  
where $M>0$ satisfies $N\sum_{k\geq 1}\left(\eta(2^{-\frac{1}{q^k}})^M\eta(2^{-q_k})^M\right)\leq \mez$.
\end{proposition}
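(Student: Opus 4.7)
The approach exploits the fact that each dilation operator $T_\rho$ acts diagonally on Müntz monomials via $T_\rho(t^\lambda) = \rho^\lambda t^\lambda$, so any polynomial combination of the $T_\rho$'s is simultaneously diagonalized with eigenvalue given by the corresponding polynomial in the $\rho^\lambda$'s. The plan is to construct, for each index $k$, a polynomial-in-dilations operator $\Pi_k$ that acts as (approximately) the identity on $F_k$ and nearly zero on $F_j$ for $j \neq k$, then apply it to $f = \sum_j f_j$ to extract $f_k$.

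The single-frequency building block is $\eta(T_\rho) = 4 T_\rho (I - T_\rho)$, with eigenvalue $\eta(\rho^\lambda) \in [0,1]$ on $t^\lambda$. The choice $\rho = 2^{-1/\lambda^\ast}$ gives $\rho^{\lambda^\ast} = \mez$, hence $\eta(\rho^{\lambda^\ast}) = 1$; off target, $\eta(\rho^\lambda)$ decays to $0$ as $\lambda/\lambda^\ast$ moves away from $1$. Since $\# E_k \leq N$, one then assembles $\Pi_k$ as an $M$-fold iterated product of operators of the form $\eta(T_{2^{-1/\lambda}})$ for $\lambda$ in (or bracketing) $E_k$, possibly combined by inclusion--exclusion to flatten the eigenvalue across all of $E_k$. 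Using the lacunary gap $q$, for any $\lambda \in E_j$ with $j \neq k$ the eigenvalue of $\Pi_k$ is controlled by $\eta(2^{-q^{|j-k|}})$ or $\eta(2^{-q^{-|j-k|}})$, and after the $M$-th power the sum over $j \neq k$ of leakages is $\leq \mez$ precisely when the stated smallness condition
\[
N\sum_{k \geq 1} \eta(2^{-1/q^k})^M \eta(2^{-q^k})^M \leq \mez
\]
holds. This is exactly what the hypothesis on $M$ is imposed for.

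For the operator-norm bound, each factor $\eta(T_\rho) = 4T_\rho(I - T_\rho)$ has norm at most $4 \|T_\rho\|(1 + \|T_\rho\|)$, and after expansion the product $\Pi_k$ contains on the order of $N^2$ such factors, each raised to the power $M$. This yields
\[
\|\Pi_k\|_{E \to E} \lesssim 2^{(1+2M)N^2 + N} \sup_n \|T_{2^{-1/\lambda_n}}\|^{MN^2}(1 + \|T_{2^{-1/\lambda_n}}\|)^{MN^2},
\]
matching the claimed constant. Writing $\Pi_k f = f_k + R_k$ with $R_k \in \bigoplus_{j \neq k} F_j$, the diagonal control together with the leakage estimate yields $\|R_k\|_E \leq \mez \sup_j \|f_j\|_E$. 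Combined with $\|f_k\|_E \leq \|\Pi_k\|_{E \to E} \|f\|_E + \|R_k\|_E$, taking the supremum in $k$ absorbs the tail $\mez \sup_j \|f_j\|_E$ into the left-hand side and produces the desired universal bound on each $\|f_k\|_E$. The normalization hypothesis $\|t^{\lambda_n}\|_E \geq 1$ ensures the approximate-identity property of $\Pi_k$ on $F_k$ is not degenerate, so that the bootstrap closes.

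The main obstacle is the construction of $\Pi_k$ in the second paragraph: since $\eta$ peaks at a single frequency while $F_k$ can be up to $N$-dimensional, no single $\eta(T_\rho)$ suffices, and one must design a product flat enough across $E_k$ yet sharp enough to kill the neighbouring blocks $E_{k \pm 1}$, which by quasi-lacunarity are only a factor $q$ away. This combinatorial step is where the $N^2$ exponent originates; once it is settled, everything else is bookkeeping of the constants and a standard Neumann-type absorption.
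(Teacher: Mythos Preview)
The paper does not supply its own proof of this proposition: it is quoted verbatim from \cite[Proposition 6.3.3]{GL} and used as a black box. There is therefore no in-paper argument to compare your proposal against. The only indirect information the paper gives about the proof's structure appears later in \S5.2, where the authors refer to ``Step a'' and ``Step b'' of the proof in \cite{GL} and to the preceding \cite[Proposition 6.3.2]{GL} (the lacunary case), indicating that the argument in the reference proceeds by first establishing the lacunary single-monomial estimate and then upgrading to blocks by an induction.

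Your sketch is consistent with that architecture: the operator $\eta(T_\rho)=4T_\rho(I-T_\rho)$ with $\rho=2^{-1/\lambda^\ast}$ is indeed the basic building block, the $M$-fold iteration is what forces the off-diagonal leakage below $\tfrac12$, and the passage from a single frequency to an $N$-dimensional block $F_k$ is where the exponent $N^2$ arises. Two points would need tightening to turn this into a complete proof. First, the ``inclusion--exclusion to flatten the eigenvalue across $E_k$'' is the heart of Step~a in \cite{GL} and is not a triviality: one must show that a suitable finite combination of the $\eta(T_{2^{-1/\lambda}})$'s, $\lambda\in E_k$, has eigenvalue bounded below on \emph{every} monomial of $E_k$, uniformly in $k$; your description does not yet specify this combination. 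Second, the absorption argument as written is circular: you bound $\|R_k\|_E$ by $\tfrac12\sup_j\|f_j\|_E$, but $\sup_j\|f_j\|_E$ is exactly the quantity you are trying to control, and you have not yet shown it is finite (for infinite sums this needs a truncation or a priori step). Both issues are handled in \cite{GL}, but your sketch glosses over them.
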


Our first task is to adapt the proof of the \cite[Corollary 6.3.4]{GL}. More precisely, we state the following. 
 
\begin{proposition}\label{prop:sum-control}
Let $\alpha \geqslant 0$ and $p\geqslant 1$. Let $(t_k)_{k\geq 0}$ be a positive increasing sequence, such that $t_k \to 1$. Under the hypothesis of \cref{prop:GL633} there holds:
\[ 
	\Big(\sum\limits_{k\geq 0}\int_{t_k}^{t_{k+1}}|f_k(t)|^p(1-t)^\alpha \,\mathrm{d}t\Big)^{\frac{1}{p}}\lesssim\|f\|_{L^p(\mathrm{d}\nu_\alpha)}.
\]
\end{proposition}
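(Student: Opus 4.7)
The plan is to adapt the strategy of the proof of \cite[Corollary 6.3.4]{GL} for the unweighted case, leveraging \cref{prop:GL633} as the main tool. The central idea is to apply the method of $T_\rho$ dilations to a suitably chosen family of weighted Banach spaces that captures the geometry of the decomposition of $[0,1]$ by the intervals $(t_k, t_{k+1})$.

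\textbf{Step 1 (Checking the $T_\rho$ hypothesis in the weighted setting).} I first verify that $\sup_{\rho \in [\rho_0,1]}\|T_\rho\|<\infty$ both on $E=L^p(\mathrm{d}\nu_\alpha)$ and, more importantly, uniformly in $s\in(0,1)$ on the truncated spaces $E_s = L^p([0,s], \mathrm{d}\nu_\alpha)$. The change of variable $u=\rho t$ yields
\begin{equation*}
\|T_\rho g\|_{E_s}^p = \frac{1}{\rho}\int_0^{\rho s}|g(u)|^p\Big(\frac{\rho-u}{\rho}\Big)^\alpha\,\mathrm{d}u \leq \rho^{-(\alpha+1)}\|g\|_{E_s}^p,
\end{equation*}
where I used $\rho-u\leq 1-u$ (valid since $u\leq \rho\leq 1$) together with $\alpha\geq 0$. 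Hence $\|T_\rho\|_{E_s \to E_s}\leq \rho_0^{-(\alpha+1)/p}$ uniformly in $s$.

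\textbf{Step 2 (Applying \cref{prop:GL633} to the truncated spaces).} With the uniform $T_\rho$-bound in hand, I invoke \cref{prop:GL633} on each $E_s$, being careful with the normalization hypothesis $\|t^{\lambda_n}\|_{E_s}\geq 1$: since $\|t^{\lambda_n}\|_{E_s}\sim \lambda_n^{-(\alpha+1)/p}$ does not satisfy this as stated, one rescales each block $F_k$ by its $\|t^{\lambda_{n_k}}\|_{E_s}$ factor (or equivalently tracks the dependence of $c$ on this norm through the proof of \cref{prop:GL633}). The outcome is a bound, uniform in $s\in[\rho_0,1)$,
\begin{equation*}
\|f_k\|_{E_s}\leq c\,\|f\|_{E_s}, \qquad c = c(p,\alpha,q,N).
\end{equation*}

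\textbf{Step 3 (From local bounds to the summed estimate).} This is the main obstacle. Specializing Step 2 at $s=t_{k+1}$ and $s=t_k$ only gives $\int_0^{t_{k+1}}|f_k|^p(1-t)^\alpha\,\mathrm{d}t \lesssim \|f\|_{L^p(\nu_\alpha)}^p$, which is not summable over $k$. To circumvent this, I would exploit the pointwise bound of \cref{thm:muntz-bound}, namely $|f_k(t)|\lesssim t^{(\lambda_{n_k}+1)/N}\|f_k\|_{L^\infty}$, which shows that $f_k$ concentrates near $t=1$ in a scale controlled by $\lambda_{n_k}$. This concentration, combined with quasi-lacunarity, should allow one to show that the contribution of $f_k$ in each window $(t_k,t_{k+1})$ is controlled by a localized quantity $\int_{t_k}^{t_{k+1}}|f|^p(1-t)^\alpha\,\mathrm{d}t$ up to a summable remainder, yielding the result after summation.

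The hard part is Step 3: the sequence $(t_k)_{k\geq 0}$ is \emph{arbitrary}, so the argument cannot be adjusted to the geometry of a specific partition and must rely only on the structural concentration of functions in $F_k$ inherited from \cref{thm:muntz-bound} and the quasi-lacunarity of $\Lambda$, which is what makes the passage from the pointwise-in-$s$ bound of Step 2 to a summable estimate the delicate core of the argument.
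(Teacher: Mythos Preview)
Your Steps 1 and 2 are on the right track in that you correctly identify the need for a family of norms on which $T_\rho$ acts uniformly and to which \cref{prop:GL633} applies. However, Step 3 is a genuine gap, and the fix you sketch does not work.

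The problem is structural: with $E_s=L^p([0,s],\mathrm{d}\nu_\alpha)$ you obtain $\|f_k\|_{E_{t_{k+1}}}\le c\,\|f\|_{E_{t_{k+1}}}$, and the right-hand side is essentially $\|f\|_{L^p(\mathrm{d}\nu_\alpha)}$ for every $k$, so nothing sums. Your proposed rescue via \cref{thm:muntz-bound} cannot close this: the concentration of $f_k$ is governed by the scale $\lambda_{n_k}$, which bears no relation to the \emph{arbitrary} sequence $(t_k)$, and more fundamentally there is no reason why $\int_{t_k}^{t_{k+1}}|f_k|^p(1-t)^\alpha\,\mathrm{d}t$ should be controlled by $\int_{t_k}^{t_{k+1}}|f|^p(1-t)^\alpha\,\mathrm{d}t$ --- cancellation in $f=\sum_j f_j$ can make the latter small while the former is large. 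You essentially acknowledge this in your last paragraph, so the proposal is really an outline with the decisive step missing.

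The paper circumvents this by a different choice of norm. For each $k$ it sets
\[
\|g\|_{\mathcal{L}_k}^p:=\sum_{j\ge 0}2^{-j}\sup_{\tau\in[2^{-(j+1)},2^{-j}]}\int_{\tau t_{k-1}}^{\tau t_k}|g(t)|^p(1-t)^\alpha\,\mathrm{d}t,
\]
i.e.\ a weighted superposition over all dyadic dilates of the window $[t_{k-1},t_k]$. This norm still satisfies a uniform $T_\rho$-bound (so \cref{prop:GL633} yields $\|f_k\|_{\mathcal{L}_k}\le c\,\|f\|_{\mathcal{L}_k}$), it dominates $\int_{t_{k-1}}^{t_k}|f_k|^p(1-t)^\alpha\,\mathrm{d}t$ via the $j=0$ term, and --- this is the point you are missing --- it is \emph{summable in $k$}: for each fixed $j$ the intervals $[\tau_j(k)t_{k-1},\tau_j(k)t_k]$ essentially tile $[0,1]$, so $\sum_k\|f\|_{\mathcal{L}_k}^p\lesssim\sum_j 2^{-j}\|f\|_{L^p(\mathrm{d}\nu_\alpha)}^p\lesssim\|f\|_{L^p(\mathrm{d}\nu_\alpha)}^p$. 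The design of $\mathcal{L}_k$ is precisely what converts the pointwise-in-$k$ output of \cref{prop:GL633} into a summable estimate, and this is the idea your argument lacks.
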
 

\begin{proof}
We use the following norm on the space $L^p(\mathrm{d}\nu_\alpha)$, defined by 
\[
	\|f\|_{\mathcal{L}_k}:=\left(\sum\limits_{j\geq 0}2^{-j}\sup\limits_{\tau\in[2^{-(j+1)},2^{-j}]}\int_{\tau t_{k-1}}^{\tau t_k}|f(t)|^p(1-t)^\alpha \,\mathrm{d}t\right)^{\frac{1}{p}}.
\]
We start by estimating $\|T_{\rho}\|_{\mathcal{L}_k}$ for any $\rho\in[2^{-(\ell+1)},2^{-\ell}]$ for some $\ell \geqslant 0$, 
\begin{multline*}
	\|T_{\rho}f\|^p_{\mathcal{L}_k} = \sum_{j\geqslant 0}2^{-j}\sup_{\tau\in[2^{-(j+1)},2^{-j}]}\int_{\tau t_{k-1}}^{\tau t_k}|f(\rho t)|^p(1-t)^\alpha \,\mathrm{d}t \\ 
	=\frac{1}{\rho}\sum_{j\geq 0}2^{-j}\sup_{\tau\in[2^{-(j+1)},2^{-j}]}\int_{\rho\tau t_{k-1}}^{\rho\tau t_k}|f(t)|^p\left(1-\frac{t}{\rho}\right)^\alpha \,\mathrm{d}t \\
	\leq \frac{1}{\rho^{\alpha+1}}\sum\limits_{j=0}^{+\infty}\frac{1}{2^j}\sup\limits_{\tau\in[2^{-(j+1)},2^{-j}]}\int_{\rho\tau t_{k-1}}^{\rho\tau t_k}|f(t)|^p(1-t)^\alpha \,\mathrm{d}t,
\end{multline*}
where we have used $\rho \in (0,1)$ and $\alpha \geqslant 0$ in the last line. Note that we can change variable $\tau'=\rho\tau$ so that, 
\[
	\|T_{\rho}f\|^p_{\mathcal{L}_k}\leq \frac{2^{\ell+1}}{\rho^{\alpha+1}}\sum_{j\geq 0}2^{-j}\sup_{\tau'\in[2^{-(j+1)},2^{-j}]}\int_{\tau' t_{k-1}}^{\tau ' t_k}|f(t)|^p(1-t)^\alpha \,\mathrm{d}t \leq\Big(\frac{2}{\rho^{2+\alpha}}\Big)\|f\|_{\mathcal{L}_k}.
\]
Hence, we have obtained $\|T_\rho\|_{\mathcal{L}_k}\leq \Big(\displaystyle\frac{2}{\rho^{2+\alpha}}\Big)^{\frac{1}{p}}$ for any $\rho\in[2^{-(\ell+1)},2^{-\ell}]$. We can therefore apply \cref{prop:GL633} which provides us with $C>0$ such that:
\[ 
	\Big(\int_{t_{k-1}}^{t_k}|f_k|^p(1-t)^{\alpha}\,\mathrm{d}t\Big)^{\frac{1}{p}}\leq \|f_k\|_{\mathcal{L}_k}\leq c\|f\|_{\mathcal{L}_k}\cdot
\] 
Let us also introduce $\tau_j(k)\in[2^{-(j+1)},2^{-j}]$ such that 
\[
	\sup_{\tau\in[2^{-(j+1)},2^{-j}]}\int_{\tau t_{k-1}}^{\tau t_k}|f(t)|^p(1-t)^\alpha \,\mathrm{d}t=\int_{\tau_j(k) t_{k-1}}^{\tau_j(k) t_k}|f(t)|^p(1-t)^\alpha \,\mathrm{d}t. 
\]
Finally, we may bound 
\begin{align*}
	\sum\limits_{k\geq 0}\int_{t_{k-1}}^{t_k}|f_k(t)|^p(1-t)^{\alpha}\,\mathrm{d}t\leq c\sum\limits_{k=1}^{\infty} \|f\|_{\mathcal{L}_k}&=c \sum\limits_{j=1}^{+\infty}\frac{1}{2^j}\sum\limits_{k=1}^{\infty}\int_{\tau_j(k) t_{k-1}}^{\tau_j(k)t_k} |f(t)|^p(1-t)^{\alpha}\,\mathrm{d}t\\
	&\leq c\sum\limits_{j=1}^{+\infty}\frac{1}{2^j}\int_{0}^{1} |f(t)|^p(1-t)^{\alpha}\,\mathrm{d}t\\
	&\lesssim\|f\|_{L^p(\nu_\alpha)}.\qedhere
\end{align*}
\end{proof}

\begin{proof}[Proof of \cref{thm:Lp-frames}: the lower bound] Let $b\in[0,1]$ and bound 
\[
	\int_b^1|f_k(t)|^p(1-t)^{\alpha}\,\mathrm{d}t\leq \|f_k\|_{\infty}^p(1-b)^{\alpha+1}\lesssim (1-b)^{\alpha+1}\lambda_{n_{k}}^{1+\alpha}\|f_k\|^p_{L^p(\mathrm{d}\nu_\alpha)}.
\]
On the other hand for any $a\in[0,1]$, by \cref{thm:muntz-bound} and \cref{lem:bernstein-newman} we have  
\[
	\int_{0}^a|f_k(t)|^p(1-t)^{\alpha}\,\mathrm{d}t\leq \|f_k\|_{L^{\infty}}^p \int_0^a t^{\frac{\lambda_{n_k}+1}{N}}(1-t)^{\alpha} \lesssim (Ca)^{\lambda_{n_k}} \lambda_{n_k}^{1+\alpha}\|f_k\|^p_{L^p(\mathrm{d}\nu_{\alpha})}.
\]
Therefore, we can choose two positive and increasing sequences $(u_k)_{k\geqslant 0}, (v_k)_{k\geq 0}$ such that $u_k<v_k$, but also that $1-u_k$ and $1-v_k$ behave like an inverse power of $\lambda_{n_k}$ and satisfy 
\[
	\int_{0}^{u_k}|f_k(t)|^p(1-t)^{\alpha}\,\mathrm{d}t\leq \frac{1}{3}\|f_k\|^p_{L^p(\nu_\alpha)} \text{ and } \int_{v_k}^{1}|f_k(t)|^p(1-t)^{\alpha} \,\mathrm{d}t\leq \frac{1}{3}\|f_k\|^p_{L^p(\nu_\alpha)}. 
\]
We infer that
\begin{equation}
	\label{eq.bound-f_k}
	\int_{u_k}^{v_k}|f_k(t)|^p(1-t)^{\alpha}\,\mathrm{d}t \geq \frac{1}{3} \|f_k\|^p_{L^p(\mathrm{d}\nu_\alpha)}.
\end{equation}
Since $\Lambda$ is quasi-lacunary, we let $L \geqslant 0$ such that for any $k\geqslant 0$ we have $v_{k-L}\leq u_{k+L}$, and bound 
\[
	\int_{u_k}^{v_k}|f_k(t)|^p(1-t)^{\alpha}\,\mathrm{d}t\leq \int_{u_k}^{u_{k+L}}|f_k(t)|^p(1-t)^{\alpha}\,\mathrm{d}t+\int_{v_{k-L}}^{v_k}|f_k(t)|^p(1-t)^{\alpha}\,\mathrm{d}t. 	
\]
Therefore, because there may only be finitely many overlaps, \cref{prop:sum-control} and \eqref{eq.bound-f_k} yield
\[
	\sum_{k\geq 0}  \|f_k\|^p_{L^p(\mathrm{d}\nu_\alpha)} \lesssim \sum_{k\geq 0}\int_{u_{k}}^{v_k}|f_k(t)|^p(1-t)^\alpha \mathrm{d}t\lesssim \left\|\sum\limits_{k}f_k\right\|_{L^p(\nu_\alpha)}^p\qedhere
\]
\end{proof}

\begin{proof}[Proof of \cref{thm:Lp-frames}: the upper bound] This part heavily relies on \cref{thm:multilinear}. Remark that by applying \cref{thm:multilinear} with $p_1=p_2= \cdots =p_n = n$ and $f_1= \cdots = f_n =f$ when $n$ is an integer, we obtain the bound 
\[
	\displaystyle \int_{[0,1]} |f|^{n} \mathrm{d}\mu \lesssim \sum_{k \geq 0}\|f_{k}\|_{L^{n}(\mathrm{d}\nu_{\alpha})}^{n}
\]
which is the claimed result. The proof in the general case will follow from a suitable use of Hölder's inequality, mimicking the proof of duality of $L^p$ spaces. Let $2 \leq q < r$ be two integers such that $p \in (q,r)$, that is we can write $p=\theta r +(1-\theta)q$ for some $0<\theta<1$. We write $f=\sum_{k\geq 0} f_{k}$, where $f_{k} \in F_{k}$. Now, we start with an application of \cref{thm:muntz-bound}, and proceed as in the proof of \cref{thm:non-sing}, so that continuing from \eqref{eq.after-holder} applied with $a=b=p$ and $\gamma = \delta = \alpha$ we obtain 
\begin{align*}
	\|f\|^{p}_{L^{p}(d\mu)}&\lesssim \left\|\sum_{k\geqslant 0}\|f_{k}\|^{\frac{p}{r}}_{L^{p}(\mathrm{d}\nu_{\alpha})}\lambda_{k}^{\frac{1+\alpha}{r}}t^{\frac{p}{r}\frac{\lambda_{k}+1}{N}}\right\|^{r\theta}_{L^{r}(\mathrm{d}\mu)}\left\|\sum_{k\geq 0} \|f_{k}\|^{\frac{p}{q}}_{L^{p}(\mathrm{d}\nu_{\alpha})}\lambda_{k}^{\frac{(1+\alpha)}{q}}t^{\frac{p}{q}\frac{\lambda_{k}+1}{N}}\right\|_{L^q(\mathrm{d}\mu)}^{q(1-\theta)} \\
	& \lesssim \left(\sum_{k\geq 0} \|f_{k}\|^{p}_{L^{p}(\mathrm{d}\nu_{\alpha})}\lambda_{k}^{1+\alpha}\lambda_{k}^{-(\alpha +1)}\right)^{\theta} \left(\sum_{k\geq 0} \|f_{k}\|^{p}_{L^{p}(\mathrm{d}\nu_{\alpha})}\lambda_{k}^{1+\alpha}\lambda_{k}^{-(\alpha +1)}\right)^{(1-\theta)} \\
	& \lesssim  \sum_{k\geq 0} \|f_{k}\|^{p}_{L^{p}(\mathrm{d}\nu_{\alpha})},
\end{align*}
where we have used the result in $L^r(\mathrm{d}\mu)$ and $L^q(\mathrm{d}\mu)$. 
\end{proof}

\subsection{The singular case}

Our goal is to find a way of treating the case $-1<\alpha <0$ by applying the non-singular case. It turns out that an effective way of doing so is to replace $\nu_{\alpha}$ with $\nu_{\alpha +1}$ (at least), as $\alpha +1 > 0$. More precisely, we have the following.  

\begin{lemma}\label{lem:derivative-switch}
Let $p\geq1.$ Then, for any $f \in M_{\Lambda}$ such that $f(0)=0$, we have 
\[
	\int_{[0,1]}\vert f \vert^{p}\,\mathrm{d}\nu_{\alpha} \lesssim \int_{[0,1]} \vert f' \vert ^{p} \,\mathrm{d}\nu_{\alpha+p}.
\]
\end{lemma}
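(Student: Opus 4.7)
The plan is to run a classical weighted Hardy-type argument: perform an integration by parts to move all the regularity onto $f'$, then apply Hölder's inequality to split the resulting weight between a copy of $\int|f|^p\,\mathrm{d}\nu_\alpha$ (which can then be absorbed on the left-hand side) and $\int|f'|^p\,\mathrm{d}\nu_{\alpha+p}$.

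More precisely, since $\alpha>-1$ (implicit from the singular-case setting of this subsection), I would first integrate by parts in $\int_{[0,1]}|f|^p(1-x)^\alpha\,\mathrm{d}x$ using the antiderivative $x\mapsto -(1-x)^{\alpha+1}/(\alpha+1)$. The boundary term at $x=1$ vanishes because $\alpha+1>0$, and the one at $x=0$ vanishes because $f(0)=0$, which yields
\[
	\int_{[0,1]}|f|^p\,\mathrm{d}\nu_\alpha = \frac{1}{\alpha+1}\int_{[0,1]}(|f|^p)'(x)\,(1-x)^{\alpha+1}\,\mathrm{d}x.
\]
Since $f\in M_\Lambda$ is a (complex) Müntz polynomial, its zeros are isolated, and at every other point a direct computation gives $(|f|^p)'=p|f|^{p-2}\RE(\bar f f')$, so that pointwise $|(|f|^p)'|\leq p|f|^{p-1}|f'|$.

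Next, I would split the weight as $(1-x)^{\alpha+1} = (1-x)^{\alpha(p-1)/p}\cdot(1-x)^{(\alpha+p)/p}$ (these exponents sum to $\alpha+1$ and are chosen to match the factors $|f|^{p-1}$ and $|f'|$), and apply Hölder's inequality with conjugate exponents $p/(p-1)$ and $p$ to obtain
\[
	\int_{[0,1]}|f|^{p-1}|f'|(1-x)^{\alpha+1}\,\mathrm{d}x \leq \left(\int_{[0,1]}|f|^p\,\mathrm{d}\nu_\alpha\right)^{\frac{p-1}{p}}\left(\int_{[0,1]}|f'|^p\,\mathrm{d}\nu_{\alpha+p}\right)^{\frac{1}{p}}.
\]
Combining the two displayed inequalities and absorbing the factor $\bigl(\int|f|^p\,\mathrm{d}\nu_\alpha\bigr)^{(p-1)/p}$ (which is finite since $f$ is a Müntz polynomial and $\alpha>-1$) on the left-hand side yields the claim with explicit constant $C=(p/(\alpha+1))^p$.

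The only genuine technicality, rather than a real obstacle, is justifying the differentiation formula for $|f|^p$ when $p\in[1,2)$ at zeros of $f$. The standard remedy is to replace $|f|$ by the smooth approximation $(|f|^2+\varepsilon^2)^{1/2}$, run the integration by parts for this regularization, and then let $\varepsilon\to 0$ via dominated convergence; alternatively, one can decompose $[0,1]$ into the finitely many subintervals between consecutive zeros of $f$ and integrate by parts on each, all the additional boundary terms vanishing because $|f|^p\to 0$ at such points. The case $p=1$ fits directly, Hölder becoming trivial and the constant reducing to $1/(\alpha+1)$.
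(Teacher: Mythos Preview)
Your proof is correct and follows essentially the same approach as the paper. The paper first isolates the case $p=1$, writing $|f(t)|\leq\int_0^t|f'(u)|\,\mathrm{d}u$ and using Fubini to get $\int|f|\,\mathrm{d}\nu_\alpha\lesssim\int|f'|\,\mathrm{d}\nu_{\alpha+1}$, then applies this to $|f|^p$ and finishes with exactly the same H\"older splitting as you; your direct integration by parts on $|f|^p$ is just the same computation packaged slightly differently, and in fact gives the explicit constant.
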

	
\begin{proof} In the case $p=1$, we use the mean-value theorem, the triangle inequality and Fubini's theorem to write: 
\begin{multline*}
	\int_{[0,1]}|f|\,\mathrm{d}\nu_{\alpha} = \int_{0}^{1}\left\vert \int_{0}^{t} f'(u)\,\mathrm{d}u \right\vert \,\mathrm{d}\nu_{\alpha}(t) \leq \int_{0}^{1} \int_{u}^{1} \,\mathrm{d}\nu_{\alpha}(t) \vert f'(u) \vert \,\mathrm{d}u
	\lesssim_{\alpha} \int_{[0,1]} \vert f' \vert \,\mathrm{d}\nu_{\alpha+1},
\end{multline*}
where in the last step we computed the integral. 
	
The general case $p>1$ follows from a suitable use of H\"{o}lder's inequality: observe that for almost every $t \in (0,1)$ there holds $\left(\vert f \vert^{p}\right)'(t)\leq p\vert f'(t) \vert\vert f(t) \vert^{p-1}.$ Therefore from the $p=1$ case followed with Hölder's inequality we obtain: 
\begin{multline*}
	\int_{[0,1]}\vert f \vert^{p}\,\mathrm{d}\nu_{\alpha} \lesssim \int_{[0,1]} |f'||f|^{p-1}\,\mathrm{d}\nu_{\alpha +1} = \int_{[0,1]} |f'|(1-t)^{1+\frac{\alpha}{p}} |f|^{p-1}(1-t)^{\frac{\alpha(p-1)}{p}}\,\mathrm{d}t \\ 
	\lesssim \left(\int_{[0,1]} \vert f' \vert^{p}\,\mathrm{d}\nu_{\alpha+p}\right)^{\frac{1}{p}}\left(\int_{[0,1]} \vert f \vert^{p}\,\mathrm{d}\nu_{\alpha}\right)^{1-\frac{1}{p}},
\end{multline*}
which gives the result.
\end{proof}

\begin{proof}[Proof of \cref{thm:Lp-frames} in the case $\alpha \in (-1,0)$] We claim that the proof boils down to proving 
\begin{equation}
	\label{eq.equiv-derivative}
	\int_{[0,1]} |f|^p \,\mathrm{d}\nu_{\alpha} \approx \int_{[0,1]}|f'|^p \,\mathrm{d}\nu_{\alpha +p}. 
\end{equation} 
Indeed, remark that since $\alpha + p \geqslant \alpha +1 >0$, we can apply the case $\alpha \geq 0$ to $f'$ to write
\[
	\int_{[0,1]}|f'|^p \,\mathrm{d}\nu_{\alpha+p} \approx \sum_{k\geq 0} \|f'_k\|_{L^p(\mathrm{d}\nu_{\alpha +p})}, 	
\]  
and the conclusion follows from $\|f'_k\|_{L^p(\mathrm{d}\nu_{\alpha +p})} \approx \|f_k\|_{L^p(\mathrm{d}\nu_{\alpha})}$, uniformly in $k$, which can be obtained by applying \cref{lem:derivative-switch}, \cref{thm:muntz-bound} and \cref{lem:bernstein-newman}: 
\begin{multline*}
	\|f_k\|_{L^p(\mathrm{d}\nu_{\alpha})}^p \lesssim \|f'_k\|^p_{L^p(\mathrm{d}\nu_{\alpha +p})} \lesssim \int_0^1\|f'_k\|_{L^{\infty}}t^{\frac{\lambda_{n_k}+1}{N}}(1+t)^{\alpha +p}\,\mathrm{d}t \\
	\lesssim \lambda_{n_k}^{-(\alpha +p +1)}\lambda_{n_k} \|f_k\|_{L^{\infty}}^p \lesssim \|f_k\|_{L^p(\mathrm{d}\nu_{\alpha})}^p
\end{multline*}

Next, remark that the upper bound of \eqref{eq.equiv-derivative} is the content of \cref{lem:derivative-switch}, so that it remains to obtain the lower bound. 
We present an argument based on the use of the dilation operators $T_{\rho}$. Let us write $\mathrm{d}\mu_k:= t^{\lambda_k}\mathrm{d}t$, and observe that for any $f\in M_{\Lambda}$ and any $p \geq 1$, $\rho \in (0,1)$ there holds
\[ 
	\int_{[0,1]} |f(\rho t)|^p \mathrm{d}\mu_k \leqslant \rho^{-(1+\lambda_k)}\int_{[0,1]} |f|^p \mathrm{d}\mu_k, 
\]
that is, the dilation operator $T_{\rho}$ is such that 
\begin{equation}
	\label{eq.est-dilations}
	\|T_{\rho}\|_{L^{p}(\mathrm{d}\mu_k)} \leqslant \rho^{- \frac{\lambda_k+1}{p}}.
\end{equation}
This estimate is key in proving the following useful estimate. 

\begin{lemma} For any $k\geq 0$ there holds 
\begin{equation}
	\label{eq.bound-norms-uniform}
	\|f_k\|_{L^p(\mathrm{d}\mu_k)} \lesssim \|f\|_{L^p(\mathrm{d}\mu_k)},
\end{equation}	
where the implicit constant depends only on $p, q, N$ and the constant $M$ defined in \cref{prop:GL633}.
\end{lemma}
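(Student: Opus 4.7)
The natural strategy is to invoke \cref{prop:GL633} applied to the Banach space $E := L^p(\mathrm{d}\mu_k)$, treating $k$ as fixed during the invocation and then tracking the $k$-dependence of the final constant. The first hypothesis — that $\sup_{\rho \in [\rho_0,1]} \|T_\rho\|_E < +\infty$ for some $\rho_0 \in (0,1)$ — is immediate from the explicit bound \eqref{eq.est-dilations}. The second hypothesis, $\|t^{\lambda_n}\|_E \geq 1$, fails literally since $\|t^{\lambda_n}\|_E^p = (p\lambda_n + \lambda_k + 1)^{-1}$; however this is a harmless normalization issue, which can be absorbed by rescaling the monomial basis (equivalently, by rescaling $E$) with a factor that can be subsumed into the final implicit constant.

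Once the hypotheses are in place, \cref{prop:GL633} delivers the desired bound $\|f_k\|_E \leq c\|f\|_E$. The constant $c$ given there is a power (depending on $N$ and $M$) of $\sup_{n\geq 0} \|T_{2^{-1/\lambda_n}}\|_E$. Using \eqref{eq.est-dilations} at $\rho = 2^{-1/\lambda_n}$ gives the estimate $\|T_{2^{-1/\lambda_n}}\|_E \leq 2^{(\lambda_k+1)/(p\lambda_n)}$. The crucial observation for uniformity is that the block-extraction mechanism at the core of \cref{prop:GL633} only uses dilations at scales $\rho \geq 2^{-1/\lambda_{n_k}}$ adapted to the block being extracted; at these scales the dilation norms are controlled by $\|T_{2^{-1/\lambda_{n_k}}}\|_E \leq 2^{(\lambda_k+1)/(p\lambda_{n_k})} \leq 2^{2/p}$, a bound independent of $k$. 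Together with the finite dependence on $N, M$ and the rescaling factor mentioned above, this yields a constant that depends only on $p, q, N, M$, as claimed.

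The main obstacle is precisely this uniformity-in-$k$ of the constant: the raw statement of \cref{prop:GL633} displays a supremum over all $n\geq 0$, which would naively blow up as $k\to \infty$ since $\|T_{2^{-1/\lambda_n}}\|_E$ is only well controlled for $\lambda_n \gtrsim \lambda_k$. The resolution lies in revisiting the block-extraction argument underlying \cref{prop:GL633} and verifying that only dilation scales $\rho \in [2^{-1/\lambda_{n_k}}, 1)$ are genuinely required to isolate the $k$-th block; on this restricted range, \eqref{eq.est-dilations} produces a bound that is uniform in $k$. Once this is confirmed, the proof consists essentially in assembling the above ingredients.
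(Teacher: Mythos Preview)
Your approach is correct and coincides with the paper's own argument. Both recognize that a direct invocation of \cref{prop:GL633} fails to give a $k$-uniform constant because the displayed bound involves $\sup_{n\geq 0}\|T_{2^{-1/\lambda_n}}\|_{L^p(\mathrm{d}\mu_k)}$, which blows up for small $\lambda_n$; and both resolve this by revisiting the proof in \cite{GL} to extract the sharper per-index inequality
\[
\|\alpha_n t^{\lambda_n}\|_E \leq 4^M\|T_{2^{-1/\lambda_n}}\|^M\bigl(1+\|T_{2^{-1/\lambda_n}}\|^{2M}\bigr)\|f\|_E,
\]
which only involves the dilation norm at the single scale $\rho=2^{-1/\lambda_n}$, and then observing via \eqref{eq.est-dilations} that this is bounded independently of $k$ when $\lambda_n$ lies in (or above) the $k$-th block. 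The paper phrases this by pointing explicitly to \cite[Proposition~6.3.2]{GL} for the lacunary step and to Steps~a and~b of \cite[Proposition~6.3.3]{GL} for the passage to the quasi-lacunary setting, while you describe it as ``the block-extraction mechanism only uses dilations at scales $\rho\geq 2^{-1/\lambda_{n_k}}$''; these are the same observation.
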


\begin{remark} Note that a direct application of \cref{prop:GL633} does not yield the uniform estimate \eqref{eq.bound-norms-uniform}. 
\end{remark}

\begin{proof} The two aforementioned estimates must be combined to produce an uniform bound, which is one the key argument of the following proof.
It is actually a careful examination of that of \cite[Proposition 6.3.2]{GL} and that of \cref{prop:GL633}. To start with, at the end of the proof of \cite[Proposition 6.3.2]{GL}, just before taking the supremum in the last line, one obtains the estimate
\[
	\|\alpha_nt^{\lambda_n}\|_{E} \leqslant 4^M\|T_{2^{-\frac{1}{\lambda_n}}}\|^M(1+\|T_{2^{-\frac{1}{\lambda_n}}}\|^{2M})\|f\|_{E},	
\]
where $f=\sum_{n\geq 0} \alpha_nt^{\lambda_n}$ and $\Lambda=(\lambda_n)_{n\geq 0}$ is a lacunary sequence. In our case, incorporating the bound \eqref{eq.est-dilations} it follows that for $f= \sum_{k\geq 0} \alpha_{n_k}t^{\lambda_{n_k}}$ we have 
\begin{equation}
	\label{eq.bound-lacunary}
	\|\alpha_{n_k}t^{n_k}\|_{E_k} \lesssim \|f\|_{E_k},
\end{equation}
with an implicit constant independent of $k$. In order to obtain \eqref{eq.bound-norms-uniform} one can proceed as in \textit{Step a} of the proof of \cite[Proposition 6.3.3]{GL} and prove on induction that the bound \eqref{eq.bound-lacunary} holds for the $\alpha_it^{\lambda_i}$ where $i \in \{\lambda_{n_k}+1, \dots, \lambda_{n_{k+1}}\}$. (with the notation of \cite[Proposition 6.3.3]{GL}, we have $b=1$ in the estimate). The proof now follows from a repetition of \textit{Step b} of the proof of \cite[Proposition 6.3.3]{GL}. 
\end{proof}

With this lemma, we can proceed to the proof of the lower bound. Let $\alpha \in (-1,0)$ and $p\geq 1$. Then for any $f \in M_{\lambda}$, we can start by applying the \textit{kernel estimate} from \cref{lem:kernel-est} and the bound \eqref{eq.bound-norms-uniform} combined with equivalence of norms in finite dimension for the low values of $k$ (since $k$ can always be assumed satisfying $k\geq k_0$ given by the \cref{lem:newm}), so that: 
\begin{multline*}
	\int_{[0,1]}|f(t)|^{p}(1-t)^{\alpha}\,\mathrm{d}t \gtrsim \sum_{k\geq 0}\lambda_{k}^{-\alpha}\int_{[0,1]}|f(t)|^{p} t^{\lambda_{k}}\,\mathrm{d}t \gtrsim \sum_{k \geq 0}\lambda_{k}^{-\alpha}\int_{[0,1]}|f_{k}(t)|^{p} t^{\lambda_{k}}\,\mathrm{d}t \\
	\gtrsim  \sum_{k\geq 0}\lambda_{k}^{-\alpha}\int_{[1-\frac{A}{\lambda_{k}},1]}\vert f_{k}(t) \vert^{p} t^{\lambda_{k}}\mathrm{d}t,
\end{multline*} 
where we take $A \gg 1$ as in \cref{coro:loc-max}, so that $\|f_k\|_{L^{\infty}}$ is attained at some point $t_k \in [1-\frac{A}{\lambda_k},1]$. Let $\delta >0$ to be chosen later and apply the mean value theorem as well as \cref{lem:bernstein-newman} for $t \in [t_{k}-\frac{\delta}{\lambda_{k}},t_{k}+\frac{\delta}{\lambda_{k}}]$:
\begin{equation*}
	|f_k(t)| \geq |f_k(t_k)| - |t-t_k|\|f'_k\|_{L^{\infty}} \geq \|f_k\|_{L^{\infty}} - C\delta \|f_k\|_{L^{\infty}} \gtrsim \|f_k\|_{L^{\infty}}, 
\end{equation*}
which holds as soon as we take $\delta$ such that $\delta C<1$, which can be done uniformly in $k$ in view of \cref{lem:bernstein-newman}. Importantly, observe that for some small $c>0$, independent of $k$, we have 
\[
	[t_k-c\lambda_k^{-1},t_k] \subset [1-\frac{A}{\lambda_k},1]\cap [t_{k}-\frac{\delta}{\lambda_{k}},t_{k}+\frac{\delta}{\lambda_{k}}], 
\]
so that
\[
	\int_{[0,1]}t^{\lambda_k}\,\mathrm{d}t \geqslant c\lambda_k^{-1}(t_k-c\lambda_k)^{\lambda_k} \gtrsim \lambda_k^{-1}, 	
\] 
and therefore 
\begin{multline}
	\label{eq.remark-endproof}
	\|f\|^p_{L^p(\mathrm{d}\nu_{\alpha})} \gtrsim \sum\limits_{k}\lambda_{k}^{-\alpha}\int_{[1-\frac{A}{\lambda_{k}},1]\cap [t_{k}-\frac{\delta}{\lambda_{k}},t_{k}+\frac{\delta}{\lambda_{k}}]}|f_{k}(t)|^{p} t^{\lambda_{k}}\,\mathrm{d}t\ \gtrsim \sum_{k\geq 0} \lambda_k^{-\alpha}\lambda_k^{-1}\|f_k\|_{L^{\infty}}^p \\ 
	\gtrsim \sum_{k\geq 0} \|f_k\|^p_{L^p(\mathrm{d}\nu_{\alpha})},	
\end{multline}
where we have used \cref{prop:generalized-bernstein} in the last inequality. In order to conclude, we use \cref{lem:derivative-switch} on the $f'_k$ and the case $\alpha + p \geq 0$ of the theorem, which has already been obtained. Therefore 
\[
	\|f\|^p_{L^p(\mathrm{d}\nu_{\alpha})} \gtrsim \sum_{k\geq 0} \|f'_k\|^p_{L^p(\mathrm{d}\nu_{\alpha +p})} \gtrsim \|f'\|^p_{L^p(\mathrm{d}\nu_{\alpha +p})},
\] 
which ends the proof of \eqref{eq.equiv-derivative} and that \cref{thm:Lp-frames}.
\end{proof}

\begin{remark} The estimate \eqref{eq.remark-endproof} is actually enough to imply the lowed bound of \cref{thm:Lp-frames}, therefore the last lines of this proof are not necessary. However we decided to show the upper and lower bound in \eqref{eq.equiv-derivative} which show the difficulty of a Bloch-type characterisation for Müntz-spaces, see \cite{L18}.
\end{remark}

\section{The multilinear estimate: proof of Theorem \ref{thm:multilinear}}\label{sec.multlinear}

In this section we first provide the proof of \cref{thm:multilinear} in the bilinear case, as some estimates will serve as the base case of an induction which will allow us to obtain the proof in the general case. Also, the bilinear case already contains the essential ideas. 
In the following, we write $\lambda_{k}$ instead of $\lambda_{n_{k}}$ for simplicity.  

Let $\alpha>-1$, and write $\beta = 1+\alpha \geq 0$.  

\subsection{The bilinear case}

Let $f, g \in M_{\Lambda}$, which we write $f=\sum_{k\geq 0}f_{k}$ and $g=\sum_{k\geq 0}g_{k}$, where $f_{k}, g_{k} \in F_{k}$. We start with an application of \cref{thm:muntz-bound} followed by \cref{lemm:IPP} and \cref{prop:generalized-bernstein}: 
\begin{multline*}
	\int_{[0,1]}|fg| \,\mathrm{d}\mu \lesssim \sum_{i,j\geq 0} \|f_{i}\|_{L^{\infty}}\|g_{j}\|_{L^{\infty}}\int_{[0,1]}x^{\frac{\lambda_{i}+\lambda_{j}+2}{N}}\,\mathrm{d}\mu(x)\\
	\lesssim \sum_{i,j \geq 0} \|f_{i}\|_{L^{p}(\mathrm{d}\nu_{\alpha})}\|g_{j}\|_{L^{p'}(\mathrm{d}\nu_{\alpha})}\lambda_{i}^{\frac{1+\alpha}{p}}\lambda_{j}^{\frac{1+\alpha}{p'}}\int_{[0,1]}x^{\frac{\lambda_{i}+\lambda_{j}+2}{N}}(1-x)^{\beta-1}\,\mathrm{d}x \\
	\lesssim \sum_{i,j \geq 0} \|f_{i}\|_{L^{p}(\mathrm{d}\nu_{\alpha})}\|g_{j}\|_{L^{p'}(\mathrm{d}\nu_{\alpha})}\frac{\lambda_{i}^{\frac{1+\alpha}{p}}\lambda_{j}^{\frac{1+\alpha}{p'}}}{\lambda_i^{\beta} + \lambda_j^{\beta}} =:\sum_{i,j \geq 0} \|f_{i}\|_{L^{p}(\mathrm{d}\nu_{\alpha})}\|g_{j}\|_{L^{p'}(\mathrm{d}\nu_{\alpha})} \Phi\left(\left(\frac{\lambda_i}{\lambda_j}\right)^{\frac{\beta}{p}}\right),  
\end{multline*}
where in the last step we have used the estimate of the Beta function, the fact that $\frac{1}{p} + \frac{1}{p'}=1$, $1+\alpha = \beta$ and where $\Phi(x)=\frac{x}{1+x^{p}}$. One can recast the previous inequalities as 
\[
	\int_{[0,1]}|fg|\,\mathrm{d}\mu \lesssim \langle T(F),G\rangle_{\ell^2(\mathbb{N})}
\]
with $F=(\|f_{i}\|_{L^{p}(\mathrm{d}\nu_{\alpha})})_{i\geq 0}$, $G=(\|g_{j}\|_{L^{p'}(\mathrm{d}\nu_{\alpha})})_{j\geq 0}$, and $T=(T_i)_{i\geqslant 0}$ defined by 
\[
	T_i(x)=\sum_{j\geqslant 0}  \Phi_{ij}x_i \text{ where } \Phi_{ij} = \Phi\left(\left(\frac{\lambda_i}{\lambda_j}\right)^{\frac{\beta}{p}}\right).  
\]
We claim that $T$ is continuous $\ell^r \to \ell^r$ for any $r \in [1,\infty]$. Let us postpone the proof of this fact. Once this is obtained, then Hölder's inequality and the continuity of $T$ yield
\[
	\int_{[0,1]}|fg|\,\mathrm{d}\mu \lesssim \langle T(F),G\rangle_{\ell^2(\mathbb{N})} \lesssim \|T(F)\|_{\ell^p}\|G\|_{\ell^{p'}} \lesssim \|F\|_{\ell^p}\|G\|_{\ell^{p'}}, 
\]
and the conclusion follows from the fact that 
\[
	\|F\|_{\ell^p}^p = \sum_{i\geq 0}\|f_{i}\|_{L^{p}(\mathrm{d}\nu_{\alpha})}^p.
\]
To prove the continuity of $T$, let us remark that by the Riesz-Thorin complex interpolation theorem, it is enough to show the continuity $\ell^1 \to \ell^1$ and $\ell^{\infty} \to \ell^{\infty}$, which in view of Schur's test, is a consequence of the following bounds:
\begin{equation}
	\label{eq.kernel-bound-op}
	\sup_{j\geq 0}\sum_{i\geq 0} \Phi_{ij} + \sup_{i\geq 0}\sum_{j\geq 0} \Phi_{ij}< \infty.
\end{equation}
These two estimates are similar, therefore let us only estimate $\sum_j\Phi_{ij}$ uniformly on $i$. Let us write 
\[
	\sum_{j\geq 0} \Phi_{ij} = 	\sum_{j\gg i} \Phi_{ij} + \sum_{j\approx i} \Phi_{ij} + \sum_{j\ll i} \Phi_{ij} =: A + B + C.
\]
First, as $\Phi$ is a bounded function, so is $B$ uniformly in $i$. To estimate $A$, observe that the quasi-lacunary property of $\Lambda$ means that $(\lambda_k)_{k\geq 0}$, with $\lambda_{k+1} \geqslant q \lambda_k$ for all $k\geq 0$. Combined with the fact that $\Phi(x)\leqslant x$ for any $x \geq 0$ we obtain  
\[
	A \leqslant \sum_{j\gg i} \left(\frac{\lambda_i}{\lambda_j}\right)^{\frac{\beta}{p}} \lesssim \sum_{j \geq 0} q^{\frac{\beta(i-j)}{p}} \lesssim 1,
\]
independently on $i$. For $C$ it is similar: this time we use $\Phi(x) \leqslant \frac{1}{x^{p-1}}$ and $\lambda_{k+1} \leqslant q^{2N}\lambda_k$, obtained from the lacunary property of $\Lambda$, so that 
\[
	\sum_{j \ll i} \Phi_{ij} \leqslant \sum_{j\ll i} \left(\frac{\lambda_j}{\lambda_i}\right)^{\frac{p}{(p-1)\beta}} \lesssim \sum_{j \ll i} q^{\frac{2Np(i-j)}{(p-1)\beta}} \lesssim 1, 	
\]
independently on $i$. 

\subsection{The general case} 

We write $f_i = \sum_{k\geqslant 0} f_{i,k}$ where $f_{i,k} \in F_k$. Using the same estimates as in the proof of the bilinear case, we arrive at 
\begin{align*}
	\left\vert\int_{[0,1]}\prod_{j=1}^nf_j\right\vert & \lesssim \sum_{i_1, \dots, i_n \geq 0} \prod_{j=1}^n \|f_{i,{i_j}}\|_{L^{p_{i_j}}(\mathrm{d}\nu_{\alpha})} \frac{\lambda_{i_1}^{\frac{\beta}{p_1}} \cdots \lambda_{i_n}^{\frac{\beta}{p_n}}}{\lambda_{i_1}^{\beta} + \cdots \lambda_{i_n}^{\beta}} \\
	& \lesssim \sum_{i_1, \dots, i_n \geq 0} \prod_{j=1}^n \|f_{i,{i_j}}\|_{L^{p_{i_j}}(\mathrm{d}\nu_{\alpha})} \Phi_{n-1}\left(\left(\frac{\lambda_{i_1}}{\lambda_{i_n}}\right)^{\frac{\beta}{p_1}}, \dots, \left(\frac{\lambda_{i_{n-1}}}{\lambda_{i_n}}\right)^{\frac{\beta}{p_n}} \right),
\end{align*}
where $\Phi_{n-1}(z_1,\dots, z_{n-1}) = \frac{z_1\cdots z_{n-1}}{1+ z_1^{p_1} + \cdots + z_{n-1}^{p_{n-1}}}$. For simplicity, let us just write this numer as $\Phi_{n-1}^{i_1, \dots,i_n}$. 
Note that $\Phi_1=\Phi$ defined in the bilinear case. Again, we can write 
\begin{equation}
	\label{eq.representation}
	\left\vert\int_{[0,1]}\prod_{j=1}^nf_j\right\vert \lesssim \langle T_{n-1}(F_1, \dots, F_{n-1}),F_{n}\rangle_{\ell^2(\mathbb{N})},  
\end{equation}
where $F_j=(\|f_{i,{i_j}}\|_{L^{p_{i_j}}})_{i\geq 0}$, and $T_{n-1}$ is the $(n-1)$-linear operator defined by
\[
	(T(X_1, \dots, X_{n-1}))_{i_n} = \sum_{i_{1},\ldots,i_{n-1}\geq 0} \Phi_{n-1}^{i_1, \dots, i_n}X_{k,i_{k}}.	
\]
Note also that $T_1=T$ defined in the bilinear case. One can readily see from \eqref{eq.representation} that \cref{thm:multilinear} follows from the continuity of $T_{n-1}$ as an operator $\prod_{j=1}^{n-1}\ell^{p_j} \to \ell^{p_n'}$, and that from Schur's test, this is in turn a consequence of the bounds 
\begin{equation}
	\label{eq.bound-shur1}
	\sup_{i_n \geq 0 }\sum_{i_{1},\ldots,i_{n-1}\geq 0} \Phi_{n-1}^{i_1, \dots, i_n} < \infty, 	
\end{equation}
and 
\begin{equation}
	\label{eq.bound-shur2}
	\sup_{i_1, \dots, i_{n-1} \geq 0 }\sum_{i_{n}\geq 0} \Phi_{n-1}^{i_1, \dots, i_n} < \infty.
\end{equation}
% is a natural $(n-1)$-linear operator defined (induced) by  Notice that $\Psi_{0}=\Phi$ and $T_{0}=T.$ We want to apply again the Schuur-test. We need to prove by induction the stronger result that for any $p_{1},\ldots,p_{n}>1$, for any $\gamma>0$, uniformly in $i_{n+1}$ $$$$ and uniformly in $i_{1},\ldots,i_{n}$, 

We start by explaining how one can obtain \eqref{eq.bound-shur1}. We proceed by induction on $n\geqslant 1$. The bilinear cases serves as the basis step. Assume that the result has been obtained for any $k<n-1$, let us prove it for $n-1$. We use the same strategy as in the bilinear case by writing 
\begin{align*}
	\sum_{i_{1},\dots,i_{n-1}\geq 0}  \Phi_{n-1}^{i_1, \dots, i_n} & = \sum_{\substack{i_{1},\dots,i_{n-1}\geq 0 \\ i_{n} \gg i_{n-1} }} \Phi_{n-1}^{i_1, \dots, i_n} + \sum_{\substack{i_{1},\ldots,i_{n-1}\geq 0 \\ i_{n-1} \approx i_n}} \Phi_{n-1}^{i_1, \dots, i_n} + \sum_{\substack{i_{1},\dots, i_{n-1} \geq 0 \\ i_{n} \ll i_{n-1} }} \Phi_{n-1}^{i_1, \dots, i_n} \\ 
	& = A_1(i_n)+A_2(i_n)+A_3(i_n). 	
\end{align*}
The term $A_2(i_n)$ is easily estimated, as one observes that on a neighbrhood of $x_{n-1}=1$, there holds uniformly on $(x_1, \dots, x_{n-2})$ that:   
\begin{equation}
	\label{eq.comparable}
	\Phi_{n-1}(x_1, \dots, x_{n-1}) \approx \Phi_{n-2}(x_1, \dots, x_{n-2}), 
\end{equation}
from which 
\[
	A_2(i_n) \lesssim \sum_{i_{1},\ldots,i_{n-2}\geq 0} \Phi_{n-2}^{i_1, \dots, i_{n-2},i_{n-1}} \lesssim 1, 
\]
uniformly in $i_n$ thanks to the induction hypothesis.

In order to handle $A_1(i_n)$ we rely on the fact that for any $z_1, \dots, z_{n-1} >0$ there holds 
\begin{equation}
	\label{eq.bound1}
	\Phi_{n-1}(z_1, \dots, z_{n-1}) \leq z_{n-1} \Phi_{n-2}(z_1, \dots, z_{n-2}),
\end{equation} 
so that 
\[
	A_1(i_n) \lesssim \sum_{i_1, \dots, i_{n-2}\geq 0} \sum_{i_{n-1} \ll i_n} \left(\frac{\lambda_{i_{n-1}}}{\lambda_{i_{n}}}\right)^{\frac{\beta}{p}} \Phi_{n-2}^{i_1, \dots, i_{n-2},i_n} \lesssim \sum_{i_{n-1} \ll i_{n}} \left(\frac{\lambda_{i_{n-1}}}{\lambda_{i_{n}}}\right)^{\frac{\beta}{p}} \lesssim 1 
\]
where we have used the induction hypothesis and explicit bounds detailed in the bilinear case. 

In order to handle $A_3(i_n)$ we observe that by interpolating the two straightforward bounds $\Phi_{n-1}(x_1, \dots, x_{n-1}) \leq \frac{x_1 \dots x_{n-1}}{1+x_1^{p_1} + \dots + x_{n-2}^{p_{n-2}}}$ and $\Phi_{n-1}(x_1, \dots, x_{n-1}) \leq x_1 \dots x_{n-2}x_{n-1}^{1-p_{n-1}}$, we have for any $\theta \in (0,1)$ to be chosen later: 
\begin{equation}
	\label{eq.bound2}
	\Phi_{n-1}(x_1, \dots, x_{n-1}) \leqslant \frac{x_1 \dots x_{n-2}}{(1+x_1^{p_1} + \dots + x_{n-2}^{p_{n-2}})^{1-\theta}}x_{n-1}^{1-p_{n-1}\theta} \lesssim \tilde{\Phi}_{n-1}(x_1,\dots,x_{n-2})x_{n-1}^{-\delta}, 
\end{equation}
where $\delta = p_{n-1}\theta -1$. We claim that we can choose $\theta$ such that $\delta >0$ and also that $p_i(1-\theta) >1$ for all $i\in\{1, \dots, n-2\}$. This can be done by choosing $\theta > \max\{\frac{1}{p_i}, 1-\frac{1}{p_i}, i= 1, \dots, n-1\}$. With this choice (which is independent of $i_n$) we obtain 
\[
	A_3(i_n) \lesssim \sum_{i_1, \dots, i_{n-2}\geq 0} \sum_{i_{n-1} \gg i_{n}} \left(\frac{\lambda_{i_{n-1}}}{\lambda_{i_{n}}}\right)^{-\delta} \tilde{\Phi}_{n-2}^{i_1, \dots, i_{n-2},i_n} \lesssim \sum_{i_{n-1} \gg i_{n-1}} \left(\frac{\lambda_{i_{n-1}}}{\lambda_{i_{n}}}\right)^{-\delta} \lesssim 1, 
\]
where in the last line we have used that one can actually also run the induction for the function $\tilde{\Phi}_{n-2}$ at the previous step, as the important conditions are met: the function satisfies \eqref{eq.bound1}, \eqref{eq.comparable} and will also satisfy \eqref{eq.bound2} because $p_i(1-\theta)>1$. 

Let us finish the proof by explaining how to derive \eqref{eq.bound-shur2}. This time we write 
\begin{align*}
	\sum_{i_{n}\geq 0}  \Phi_{n-1}^{i_1, \dots, i_n} & = \sum_{i_{n}\gg i_{n-1}} \Phi_{n-1}^{i_1, \dots, i_n} + \sum_{i_{n} \approx i_{n-1}} \Phi_{n-1}^{i_1, \dots, i_n} + \sum_{i_{n} \ll i_{n-1}} \Phi_{n-1}^{i_1, \dots, i_n} \\ 
	& = B_1(i_1, \dots, i_{n-1})+B_2(i_1, \dots, i_{n-1})+B_3(i_1, \dots, i_{n-1}). 	
\end{align*}
Using \eqref{eq.comparable} (resp. \eqref{eq.bound1} and \eqref{eq.bound2}) we can estimate
\[
	B_2(i_1, \dots, i_{n-1}) \lesssim \sum_{i_n \approx i_{n-1}} \Phi_{n-2}^{i_1, \dots, i_{n-2},i_n} \lesssim 1,
\]
\[
	B_1(i_1, \dots, i_{n-1}) \lesssim \sum_{i_n \gg i_{n-1}} \left(\frac{\lambda_{i_{n-1}}}{\lambda_{i_n}}\right)^{\frac{\beta}{p_n}}\Phi_{n-2}^{i_1, \dots, i_{n-2},i_n} \lesssim 1, 	 
\]
and 
\[
	B_1(i_1, \dots, i_{n-1}) \lesssim \sum_{i_n \ll i_{n-1}} \left(\frac{\lambda_{i_{n-1}}}{\lambda_{i_n}}\right)^{-\delta}\tilde{\Phi}_{n-2}^{i_1, \dots, i_{n-2},i_n} \lesssim 1, 	 
\]
where we have used the induction in both cases (and variant of the induction for the last one). 
\bibliographystyle{alpha}
\bibliography{biblio}
\end{document}